\newcommand{\bbC}{\mathbb{C}}
\newcommand{\bbM}{\mathbb{M}}
\newcommand{\bbN}{\mathbb{N}}
\newcommand{\bbP}{\mathbb{P}}
\newcommand{\bbR}{\mathbb{R}}
\newcommand{\bbS}{\mathbb{S}}
\newcommand{\bbT}{\mathbb{T}}
\newcommand{\bbZ}{\mathbb{Z}}
\newcommand{\calL}{\mathcal{L}}
\newcommand{\calP}{\mathcal{P}}
\theoremstyle{plain}
\numberwithin{equation}{section}
\def\a{{\alpha}}
\def \l{\lambda}
\def \o{\omega}
\def \g{\gamma}
\def\P{{\mathbb P}}
\def \ch{ {\cosh } \, \o}
\def \th{ {\tanh } \, \o}
\def\un{{\mathbbm 1}}
\def\mi{{\mathbbm i}}
\newcommand{\ov}[1]{\overline{#1}}
\def\si{\sigma}
\newcommand{\R}{{\mathbb R}}
\newcommand{\SY}{{\mathbb S }}
\newcommand{\N}{{\mathbb N}}
\newcommand{\C}{{\mathbb C}}
\newcommand{\Z}{{\mathbb Z}}
\newcommand{\pk}{{\mathcal{P}}_g}
\newcommand{\pkg}[1]{\Lambda_{-1}^{#1}\Sl(\C)}
\newcommand{\SL}{\mathrm{SL}_{\mbox{\tiny{$2$}}}}
\newcommand{\Sl}{\mathfrak{sl}_{\mbox{\tiny{$2$}}}}
\newcommand{\SU}{\mathrm{SU}_{\mbox{\tiny{$2$}}}}
\newcommand{\su}{\mathfrak{su}_{\mbox{\tiny{2}}}}
\newcommand{\rmd}{{\rm d}}
\theoremstyle{plain}
\newtheorem{theorem}{Theorem}[section]
\newtheorem*{theorem*}{Theorem}
\newtheorem{corollary}[theorem]{Corollary}
\newtheorem*{corollary*}{Corollary}
\newtheorem{proposition}[theorem]{Proposition}
\newtheorem*{proposition*}{Proposition}
\newtheorem{lemma}[theorem]{Lemma}
\newtheorem*{lemma*}{Lemma}
\newtheorem*{example*}{Example}
\newtheorem{definition}[theorem]{Definition}
\newtheorem*{definition*}{Definition}
\newtheorem*{notation*}{Notation}
\newtheorem{remark}[theorem]{Remark}
\newtheorem*{remark*}{Remark}
\title{Finite type minimal annuli in $\SY^2 \times \R$}
\author{L. Hauswirth}
\address{L. Hauswirth, Universit\'e Paris-Est, LAMA (UMR 8050), UPEMLV, UPEC, CNRS, F-77454, Marne-la-Vall\'ee, France.}
\email{hauswirth@univ-mlv.fr}
\author{M. Kilian}
\address{M. Kilian, Department of Mathematics, National University of Ireland, University College Cork, Ireland.}
\email{m.kilian@ucc.ie}
\author{M. U. Schmidt}
\address{M. U. Schmidt, Institut f\"ur Mathematik, Universit\"at Mannheim, A5, 6, 68131 Mannheim, Germany.}
\email{schmidt@math.uni-mannheim.de}
\begin{document}

\begin{abstract} We study minimal annuli in $\SY^2 \times \R$ of finite type by relating them to harmonic maps $\bbC \to \bbS^2$ of finite type. We rephrase an iteration by Pinkall-Sterling in terms of polynomial Killing fields. We discuss spectral curves, spectral data and the geometry of the isospectral set. We consider polynomial Killing fields with zeroes and the corresponding singular spectral curves, bubbletons and simple factors. We investigate the differentiable structure on the isospectral set of any finite type minimal annulus. We apply the theory to a 2-parameter family of embedded minimal annuli foliated by horizontal circles.
\end{abstract}
\subjclass{53A10}
\thanks{\it{L. Hauswirth was partially supported by the ANR-11-IS01-0002 grant.}}
\maketitle

\section*{Introduction}

In the last decade there has been an interest in extending minimal surface theory to the target space $\SY^2 \times \R$ \cite{rosenberg2002,MRstable,MR, hauswirth2006, hoffman-white2011}. A minimal surface that is conformally immersed in $\SY^2 \times \R$ is essentially described by a harmonic map $G: \Omega \subset \C \to \SY^2$. There is an important subclass of such harmonic maps that have an algebraic description, the so-called harmonic maps of finite type. For example, all constant mean curvature ({\sc{cmc}}) tori in $\R^3,\,\mathbb{S}^3,\,\mathbb{H}^3$ have harmonic Gauss maps of finite type \cite{Hit:tor, PinS}. The description of conformally immersed proper minimal annuli of finite type is analogous to the well known theory of constant mean curvature tori of finite type \cite{Hit:tor, PinS, Bob:tor, BurFPP,FerPPS, ForW, DorH:per, McI:tor}. Thus periodic minimal immersions $X : \C \to \SY^2 \times \R$ of finite type can be described by algebraic data. Up to some finite dimensional and compact degrees of freedom the immersion is determined by the so-called {\emph{spectral data}} $(a,\,b)$. This consists of two polynomials of degree $2g$ respectively $g+1$ for some $g \in \bbN$. The polynomial $a(\l)$ encodes a hyperelliptic Riemann surface called the {\emph{spectral curve}}. The genus of the spectral curve is called {\emph{spectral genus}}. The other polynomial $b(\l)$ encodes the extrinsic closing conditions. This correspondence is called the algebro-geometric correspondence. In particular, we characterize those algebraic curves which are the spectral curves of minimal annuli in $\SY^2\times \R$. The spectral data is the starting point for a deformation theory to be used in \cite{hks2}.

Let us briefly outline this paper.  In the first section we provide a short introduction to the local theory of minimal surfaces in $\SY^2 \times \R$ and prove a Sym-Bobenko type formula. In the second section we discuss polynomial Killing fields, how these are related to the Pinkall-Sterling iteration and define the spectral curve. In the third section we invoke the Iwasawa factorization of the underlying loop groups, mention the generalized Weierstrass representation, and briefly discuss the Symes method \cite{Symes_80, BurP_adl, BurP:dre}. In the fourth section we treat the spectral curve, the isospectral action, and prove some properties of the isospectral set. In particular we discuss singular spectral curves and how to remove singularities with the isospectral action. The isospectral set $I(a)$ of a minimal annulus consists of all minimal annuli with the same spectral data $(a,b)$. We decompose $I(a)$ with the help of a group action into orbits and identify each orbit smoothly with a commutative Lie group. These orbits have different dimensions. The lower dimensional orbits are in the closures of the higher dimensional ones. Our main results are
\begin{enumerate}
\item The isospectral sets $I(a)$ are compact.
\item If the 2g-roots of $a$ are pairwise distinct, then there is one orbit diffeomorphic to
\[
I(a) \cong (\bbS^1)^g.
\]
 \item If $a$ has a double root $\alpha_0 \in \bbS^1$, then there is a diffeomorphism
\[
  I(a) \cong I(\tilde{a})\quad\mbox{with}\quad a(\l)=
\bar{\alpha}_0(\l-\alpha_0)^2\tilde{a}(\l).
\]
 \item If $a$ has a double root $\alpha_0 \notin \bbS^1$ there are two invariant submanifolds diffeomorphic to
\begin{equation*} \begin{split}
&I(a)\cong I(\tilde{a})\,\cup\, G\times I(\tilde{a})
\quad \text{with}\quad G=\C\mbox{ or }\C^\times  \\&\mbox{and}\quad
a(\l)=(\l-\alpha_0)^2(1-\l \bar \alpha_0)^2 \tilde a(\l)\,.
\end{split}
\end{equation*}
\end{enumerate}
In the fifth section we turn to periodic finite type harmonic maps, spectral data and how the closing conditions are encoded in the spectral curve. In section 6 we encounter bubbletons, simple factors and prove a factorization theorem for polynomial Killing fields with zeroes. In particular we show that changing the line in the simple factor preserves the period, and that for some choices of lines the nodal singularity on the spectral curve disappears. In the last section we compute the spectral data of minimal annuli in $\SY^2 \times \R$ that are foliated by circles. These low spectral genus examples will play an important role in a forthcoming paper \cite{hks2}. This paper presents a mostly self-contained account of the integrable systems approach to minimal surfaces in $\SY^2 \times \R$ of finite type, but may also serve as a companion for other integrable surface geometries.

\section{Minimal surfaces in $\SY^2 \times \R$}
We study conformal minimal immersions $X: \Omega \subset \C\to \SY^2 \times \R$ where $\Omega$ is a simply connected domain of $\C$. We write $X = (G,\,h)$ for $G:\Omega \to \SY^2$ and $h:\Omega \to \R$, and call $G$ the{\bf{ horizontal}}, and $h$ the {\bf{vertical}} components of $X$. If we denote by $(\C,\si^2 (u)|\rmd u|^2)$ the complex plane with the metric induced by the stereographic projection of $\SY^2$ ($\si^2 (u)=4/(1+ |u|^2)^2$), the map $G:\Omega \to \C$ satisfies
\begin{equation*}
    G_{z \bar{z}}+ \frac{2 \bar G}{1+|G|^2} G_z G_{\bar{z}}=0\,.
\end{equation*}
The holomorphic {\it quadratic Hopf differential} associated to the harmonic map $G$ is given by
\begin{equation*}
    Q(G)=(\si \circ G(z) )^2 G_z {\bar G}_z ( \rmd z)^{2}:= \phi (z) (\rmd z)^2\,.
\end{equation*}
The function $\phi$ depends on $z$, whereas $Q(G)$ does not.

By conformality the induced metric is of the form $\rmd s_{}^2= \rho ^2(z) \vert  \rmd z \vert^2$, and writing $z=x + \mathbbm{i}\,y$ the partial derivatives satisfy $\vert X_x \vert^2   = \vert X_y \vert ^2$ and $X_x \perp X_y$.
Conformality reads
\[
    \vert G _x \vert^2_{\si} + (h_x)^2  = \vert G _y \vert ^2_{\si} + (h_y)^2 \quad \mbox{and} \quad
    \left<G_x ,\,G _y \right>_{\si}+ (h_x)(h_y) = 0\,,
\]
hence $(h_z)^{2}(\rmd z)^2=-Q (G )$. The zeroes of $Q$ are double, and we can define $\eta$ as the holomorphic 1-form $\eta =\pm 2i \sqrt{Q}$. The sign is chosen so that
\begin{equation*}
    h= {\rm Re} \int  \eta\,.
\end{equation*}
The unit normal vector $n$ in $\SY^2 \times \R$ has third coordinate
$$
    \langle n ,\,\tfrac{\partial}{\partial t}\rangle = n_{3} = \frac{\vert  g\vert^2-1}{\vert g\vert^2+1} \quad \mbox{where} \quad
    g^{2}:=-\dfrac{G_z}{\ov{G _{\bar z}}}\,.
$$
We define the real  function $\o : \C \to \R$  by
$$
    n_{3}:=\tanh \o\,.
$$
We express the differential $\rmd G$ independently of $z$ by
\begin{equation*}
    \rmd G = G_{\bar z} \rmd \bar z+G_{z} \rmd z =
    \frac{1}{2 \si \circ G}\overline{ g^{-1} \eta} - \frac{1}{2
    \si \circ G} g \,\eta\,,
\end{equation*}
and the metric $\rmd s^2$ is given in a local coordinate $z$ by
\begin{equation*}
    \rmd s^2=(\vert G_{z} \vert_\si + \vert G_{\bar z} \vert_\si ) ^2 \vert \rmd z\vert ^2=\frac14(\vert g\vert^{-1}+\vert g \vert)^{2}\vert \eta \vert ^{2}=
    4 \cosh ^2 \o  |Q|\,.
\end{equation*}
We remark that the zeroes of $Q$ correspond to the poles of $\o$, so that the immersion
is well defined. Moreover the zeroes of $Q$ are points where the tangent plane is horizontal. The Jacobi operator is
$$
    \calL = \frac{1}{4|Q|\, \cosh ^2 \o}\left(\partial^2_{x}+\partial^2_{y} +{\rm Ric}(n) + \vert dn \vert ^2 \right)
$$
and can be expressed in terms of $Q$ and $\o$ by
\begin{equation} \label{jacobi}
    \calL = \frac{1}{4|Q|\,\cosh ^2 \o}\left(\partial^2_{x}+\partial^2_{y} +4|Q|+\frac{2 | \nabla \o |^2}{\cosh^2 \o} \right)\,.
\end{equation}
Since $n_3= \tanh \o$ is a Jacobi field obtained by vertical translation in $\SY^2 \times \R$, we have $\calL \tanh \o=0$ and
$$
    \Delta \o +4|Q| \sinh( \o) \cosh (\o)=0\,,
$$
where $\Delta = \partial^2_{x}+\partial^2_{y}$ is the Laplacian of the flat metric.
\subsection*{Minimal annuli.} Consider a minimal annulus $A$ properly immersed in $\SY^2 \times \R$. If $A$
is tangent to a horizontal plane $\{x_3 = t\}$, the set $A \cap \{x_3=t\}$ defines on $A$
a set of analytic curves with isolated singularities at points where the tangent plane of $A$ is horizontal. Near such a singularity $q$, there are $2k+2$ smooth branches meeting at equal angles, for some integer $k\geq 1$.

Annuli are transverse to every horizontal plane $\SY^2 \times \{t\}$. To see that we claim that $A\setminus\{x_3=t\}$ defines at least three connected components. At most two of them are non-compact because the immersion is proper. Hence there is a compact disk in $A$ with boundary in $\{x_3=t\}$, a contradiction to the maximum principle. To prove the claim consider $A_1,\,A_2,\,...,\,A_{2n}$ distinct local components at $q$ of $A\setminus\{x_3=t\}$. We know that the $A_i$ alternate between $\{x_3 \geq t\}$ and $\{x_3 \leq t\}$. If $A_1$ and $A_3$ are not in the same component of $\{x_3 \geq t\}$, then $A_2$ yields a third component and $A\setminus\{x_3=t\}$ has at least three connected components. If $A_1$ and $A_3$ are in the same component of $\{x_3 \geq t\}$ we can construct a cycle $\alpha_{13}$ in $\{x_3 \geq t\}$ which meets $\SY^2 \times \{t \}$ only at $q$. We consider $\alpha_0$ in $\{x_3 >0\}$ joining a point $x$ of $A_1$ and $y$ of $A_2$. Then join $x$ to $y$ by a local path $\alpha_1$ in $A$ going through $q$. Let $\alpha_{13}=\alpha_0 \cap \alpha_1$. If $A_2$ and $A_4$ were in the same component we could find a cycle $\alpha_{24}$ on $A$ which meets $\alpha_{13}$ in a single point, which is impossible since the genus of $A$ is zero. If $A_2$ and $A_4$ are not in the same component of $\{x_3 \leq t\}$, then $A_1$ yields a third component.

Hence properly immersed annuli are transverse to horizontal planes and the third
coordinate map $h : A \to \R$ is a proper harmonic function on each end of $A$ with $dh \neq 0$. Then each end of $A$ is parabolic and the annulus can be conformally parameterized by $\C / \tau \Z$. We will consider in the following conformal minimal immersions $X: \C  \to \SY^2 \times \R$ with $X(z+\tau)=X(z)$.

Since $dh \neq 0$, the Hopf differential $Q$ has no zeroes. If $h^*$ is the harmonic
conjugate of $h$, we can use the holomorphic map $\mi(h+\mi\,h^*) : \C \to \C$ to parameterize
the annulus by the conformal parameter $z=x+\mi y$. In this parametrization the period of the annulus is $\tau \in \R$ and
$$
    X(z)=(G(z),\,y) \hbox{ with } X(z+\tau)=X(z)\,.
$$
We say that we have parameterized the surface by its {\it third component}.
We remark that $Q=\frac14 (dz)^2$ and $\o$ satisfies the $\sinh$-Gordon equation
\begin{equation}\label{eq:sinh-Gordon}
    \Delta  \o +  \sinh (\o) \cosh(\o) = 0\,.
\end{equation}
\begin{remark}
We relax the condition $\tau \in \R^\times$ to $\tau \in \C^\times$, but we will parameterize our annuli conformally such that $Q = \phi\,dz^2$ is constant, independent of $z$ and $4|\phi|=1$. This means that the third coordinate is linear.
\end{remark}
In summary we have proven the following
\begin{theorem}\label{immersionsinhgordon}
A proper minimal annulus is parabolic and $X: \C/ \tau \Z \to \SY^2 \times \R$ has conformal parametrization  $X(z)=(G(z),\,h(z))$ with
\begin{enumerate}
\item Harmonic map $G: \C/ \tau \Z \to \SY^2$, and $h(z) = {\rm Re} (-\mi e^{\mi\Theta/2} z)$.
\item Constant Hopf differential $Q= \frac{1}{4} \exp(\mi\Theta)\,dz^2$.
\item The metric of the immersion is $ds^2=\cosh^2(\o)\, dz\otimes d\bar z$.
\item The third coordinate of the unit normal vector is $n_3 =\tanh \o$.
\item The function $\o : \C/\tau \Z \to \R $ is a solution of \eqref{eq:sinh-Gordon}.
\end{enumerate}
\end{theorem}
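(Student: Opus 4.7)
The statement of Theorem \ref{immersionsinhgordon} is a synthesis of the analytic arguments already carried out in the preceding paragraphs, so the proof is organized as an assembly of those ingredients together with a careful choice of conformal coordinate.

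The plan is first to use the transversality argument already sketched (based on the three-component principle and the maximum principle) to conclude that the height function $h : A \to \R$ is a proper harmonic function on each end with $dh \neq 0$. From this two things follow at once: each end of $A$ is parabolic, so that $A$ is conformally equivalent to a cylinder $\C/\tau\Z$ for some $\tau \in \C^{\times}$, and the quadratic differential $Q(G)$ has no zeros, because the identity $(h_z)^2 (dz)^2 = -Q(G)$ derived from conformality would otherwise force $dh = 0$. Since $Q$ is holomorphic and non-vanishing on $\C/\tau\Z$, it is a constant multiple of $(dz)^2$; after rescaling the conformal coordinate by a unit complex number we may arrange $4|Q| = 1$, i.e.\ $Q = \tfrac{1}{4} e^{\mi\Theta}(dz)^2$, which is item (2).

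Next I would lift $h$ to $\C$ and exploit the harmonic-conjugate construction from the remark to choose the conformal parameter $z$ so that the height $h$ is linear. Concretely, once $Q$ is constant, the holomorphic $1$-form $\eta = \pm 2\mi\sqrt{Q}$ is constant as well, and the chosen branch gives $\eta = e^{\mi\Theta/2}dz$, so that $h = \operatorname{Re}\int\eta = \operatorname{Re}(-\mi e^{\mi\Theta/2} z)$ after absorbing an additive constant; this is item (1). The period becomes $\tau \in \C^\times$ because we allowed the coordinate rescaling in the remark, and closing of the annulus amounts to $X(z+\tau) = X(z)$. The metric formula (3) is exactly the identity $ds^2 = 4\cosh^2\omega\,|Q|$ established before the theorem, together with $4|Q|=1$; item (4) is the formula $n_3 = \tanh\omega$ taken as the definition of $\omega$; and item (5) is the Jacobi-field equation $\Delta\omega + 4|Q|\sinh\omega\cosh\omega = 0$ obtained from $\calL\tanh\omega = 0$, specialized to $4|Q|=1$, which reduces to \eqref{eq:sinh-Gordon}.

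The only nontrivial analytic input is the conclusion that $dh \neq 0$ everywhere together with the parabolicity of each end; the rest is a compilation. The main obstacle I anticipate is making the transversality argument fully rigorous: one must verify that the local branch structure at a horizontal tangency contributes at least three connected components to $A\setminus\{x_3=t\}$, as outlined in the excerpt, and then rule out compact components by the maximum principle together with the properness of the immersion. Once $dh$ is nowhere zero, parabolicity of the ends follows because $h$ is a proper positive (or negative) harmonic exhaustion on each end, and the resulting conformal type $\C/\tau\Z$ allows all subsequent identifications to be done globally rather than only locally. With these two points in hand, items (1)--(5) follow directly from the formulas derived in Section~1.
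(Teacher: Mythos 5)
Your overall plan matches the paper's: the theorem is explicitly a summary of the preceding discussion, so the proof is the transversality/maximum-principle argument, the parabolicity of the ends, and the reparameterization by the third coordinate, and items (3)--(5) are indeed just the earlier formulas $ds^2 = 4\cosh^2\omega\,|Q|\,|dz|^2$, $n_3=\tanh\omega$ and $\calL\tanh\omega=0$ specialized to $4|Q|=1$. There is, however, one step that fails as written: the claim that since $Q$ is holomorphic and non-vanishing on $\C/\tau\Z$ it must be a constant multiple of $(dz)^2$. A nowhere-vanishing holomorphic quadratic differential on a bi-infinite cylinder need not be constant --- for instance $e^{2\pi\mi z/\tau}(dz)^2$ is $\tau$-periodic, holomorphic and zero-free. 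So the constancy of $Q$ cannot be obtained by a Liouville-type argument in an arbitrary conformal coordinate.

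The constancy of $Q$ is instead a consequence of the \emph{choice} of coordinate, and your write-up has the logical dependency reversed: you first assert that $Q$ is constant and then say ``once $Q$ is constant'' the harmonic-conjugate construction makes $h$ linear. The correct order (and the paper's) is: from $dh\neq 0$ conclude only that $Q$ has no zeros; then use the harmonic conjugate $h^*$ and the holomorphic map $w=\mi(h+\mi h^*)$ --- a local biholomorphism because $dh\neq 0$, and a global conformal parametrization of the annulus because $h$ is proper on each end --- as the new coordinate. In this coordinate $h=\mathrm{Im}\,w$ is linear by construction, and only then does the identity $(h_w)^2(dw)^2=-Q$ force $Q=\tfrac14(dw)^2$ to be constant; the phase $e^{\mi\Theta}$ and the general period $\tau\in\C^\times$ are then introduced by the unimodular rescaling of the coordinate as in the Remark. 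With that reordering, and a sentence verifying that $w$ really is a global coordinate, the rest of your assembly of items (1)--(5) is correct.
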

\subsection*{Sym-Bobenko type formula.} We use a description of harmonic maps into the symmetric space $\bbS^2$ in terms of $\SU$-valued frames.

Identify $\su \cong \R^3,\,\bigl( \begin{smallmatrix} \mi w & u + \mi v \\ -u + \mi v & -\mi w \end{smallmatrix} \bigr) \cong (u,\,v,\,w)$, so $\SY^2 \subset \R^3$ consists of length $\|X\| = \sqrt{\det X} = 1$ elements in $\su$ and $\langle X_1,X_2 \rangle= -\frac12\mathrm{tr}\, \bigl( X_1X_2 \bigr)$. Pick
\[
    \sigma_3 = \bigl( \begin{smallmatrix} \mi & 0 \\ 0 & -\mi \end{smallmatrix} \bigr) \in \SY^2\,.
\]
Let $\mathbb{T}$ denote the stabiliser of $\sigma_3$ under the adjoint action of $\SU$ on $\su$, so that $\SY^2 \cong \SU / \mathbb{T}$. If $\pi :\SU \to \SU / \mathbb{T}$ is the coset projection, then a map $F:\Omega \subset \C \to \SU$ with $G (z) = \pi \circ F (z) $ is called a {\bf{frame}} of $G$, and we have $G (z) = F (z)  \sigma_3 F^{-1}(z) $.

Harmonic maps come in $\SY^1$-families (associated families), associate to the same solution of the sinh-Gordon equation. Here $\lambda\in \SY^1$ parameterizes an associated family of harmonic maps, and the frame of such an associated family is called an {\bf extended frame}.

A method to obtain an extended frame is to write down an integrable 1-form which integrates to an extended frame by solving the integrability condition.  We present a result which is similar to results in Bobenko \cite{Bob:cmc}. We next specify for a real solution of the sinh-Gordon equation \eqref{eq:sinh-Gordon} a minimal surface in $\SY^2 \times \R$ with a particularly simple vertical component.
\begin{theorem} \label{thm:sinh}
Let $\o:\C \to \R$ be a solution of the sinh-Gordon equation. Let $\delta, \gamma \in \SY^1$ be arbitrary but fixed,
and $F_\lambda (z)$ the solution of $F_\l^{-1}dF_\l  =\alpha (\o),\,F_\lambda(0) = \un$ where
\begin{equation} \label{eq:symb}
	\alpha (\o) = \frac{1}{4}\,
	\begin{pmatrix}
2  \o_z &  \mi \lambda^{-1} \delta  e^\o\\
 \mi  \gamma e^{-\o}&  -2\o_z
  \end{pmatrix}\, dz + \frac{1}{4}\,\begin{pmatrix}
-2 \o_{\bar{z}} &
 \mi \bar{\gamma} e^{-\o}\\
 \mi  \lambda \bar \delta \,e^{\o}&
2\o_{\bar{z}}
  \end{pmatrix} d \bar{z}\,.
\end{equation}
Then the map $X_\lambda (z) = ( F_\lambda (z)\, \sigma_3\, F_\lambda (z)^{-1},\, {\rm Re}( -\mi \sqrt{\gamma \delta \lambda^{-1}}z))$ with $\l \in \SY^1$ defines an associate family of conformal minimal immersions $X_\l :\C \to \SY^2 \times \R$ with metric
$$
	ds^2  = \cosh ^2 \o \,\,dz \otimes d\bar z\,,
$$
and Hopf differential
\[
	Q = \tfrac{1}{4}\,\gamma\,\delta\;\l^{-1}\,dz^2\,.
\]
\end{theorem}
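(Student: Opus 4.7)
I would prove the theorem by a direct verification organized into three computations: integrability of $\alpha(\omega)$, the pullback data of $G_\lambda$, and compatibility with the vertical component.

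First, write $\alpha(\omega) = U\,dz + V\,d\bar z$. A unique solution $F_\lambda:\C\to\SL(\C)$ of $F_\lambda^{-1}dF_\lambda = \alpha$ with $F_\lambda(0)=\un$ exists iff the Maurer-Cartan equation $V_z - U_{\bar z} = [V,U]$ holds. A direct matrix computation, using $|\gamma|=|\delta|=1$, shows that the off-diagonal entries of this equation are automatically equal, while the diagonal entries reduce to $-\omega_{z\bar z} = \tfrac{1}{4}\sinh(\omega)\cosh(\omega)$, which is the sinh-Gordon equation \eqref{eq:sinh-Gordon} via $\Delta = 4\partial_z\partial_{\bar z}$. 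For $\lambda\in\SY^1$, one has $\bar\lambda=\lambda^{-1}$, and since $\omega$ is real a short check gives $\bar V^T = -U$, so $\alpha\in\Lsu$ and $F_\lambda\in\SU$.

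Next, compute $dG_\lambda = F_\lambda[\alpha,\sigma_3]F_\lambda^{-1}$ from $dF_\lambda = F_\lambda\alpha$. The brackets $[U,\sigma_3]$ and $[V,\sigma_3]$ are purely off-diagonal, and with the $\C$-bilinear inner product $\langle X,Y\rangle = -\tfrac12\tr(XY)$ on $\su$, trace invariance yields
\[
    \langle G_{\lambda,z},G_{\lambda,z}\rangle = \tfrac{1}{4}\gamma\delta\lambda^{-1},\qquad \langle G_{\lambda,z},G_{\lambda,\bar z}\rangle = \tfrac{1}{4}\cosh(2\omega).
\]
The first identifies the Hopf differential of $G_\lambda$ as $Q = \tfrac14\gamma\delta\lambda^{-1}\,dz^2$ as claimed.

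Finally, with $\mu:=\sqrt{\gamma\delta\lambda^{-1}}\in\SY^1$ and $h(z)=\mathrm{Re}(-\mi\mu z)$, one has $h_z=-\mi\mu/2$ and $h_{\bar z}=\mi\bar\mu/2$, so $h_z^2 = -\tfrac14\gamma\delta\lambda^{-1}$ and $h_zh_{\bar z}=1/4$. Then $h_z^2 + \langle G_{\lambda,z},G_{\lambda,z}\rangle = 0$ is exactly the conformality relation $(h_z)^2\,dz^2 = -Q$ of Theorem \ref{immersionsinhgordon}, while
\[
    2\bigl(h_zh_{\bar z} + \langle G_{\lambda,z},G_{\lambda,\bar z}\rangle\bigr) = \tfrac{1}{2}+\tfrac{1}{2}\cosh(2\omega) = \cosh^2\omega,
\]
yielding the metric $ds^2 = \cosh^2\omega\,dz\otimes d\bar z$. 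Minimality of $X_\lambda$ follows at once: $h$ is linear hence harmonic, and $G_\lambda$ is harmonic by the standard zero-curvature argument (the specific Laurent structure of $\alpha$ in $\lambda$ is precisely the harmonic-map equation for $G_\lambda$). The entire argument reduces to routine matrix algebra; the one delicate point is fixing a branch of $\sqrt{\gamma\delta\lambda^{-1}}$ consistently so that $h$ is simultaneously real-valued, linear, and produces $(h_z)^2\,dz^2 = -Q$ with the correct sign relative to the Sym-Bobenko formula.
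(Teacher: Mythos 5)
Your proposal is correct and follows essentially the same route as the paper: verify that the Maurer--Cartan integrability of $\alpha(\o)$ reduces to the sinh-Gordon equation, then compute $\langle G_{\l,z},G_{\l,z}\rangle$, $\langle G_{\l,z},G_{\l,\bar z}\rangle$ and the derivatives of the linear vertical component to obtain conformality, the metric and the Hopf differential. The only cosmetic differences are that you fold the reality condition (which the paper isolates in a separate remark) into the proof and justify harmonicity of $G_\l$ by the zero-curvature structure of the $\l$-family rather than by holomorphicity of $Q$; both are standard and neither changes the substance of the argument.
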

\begin{remark}
\label{real}
{\bf Reality condition.}
For $\lambda \in \SY^1$, $\alpha_\lambda = \alpha(\omega)$ in \eqref{eq:symb} takes values in $\su$, so $F_\lambda$ takes values in $\SU$. For general $\lambda \in \C^\times$ we have $F_\l \in \SL (\C)$. From the relation $ \overline{ \a_{1/\bar \l}}^t = -\a_\l$, the solution of $F_\l^{-1}dF_\l  =\alpha (\o)$ satisfies the reality condition
\begin{equation} \label{eq:reality-F}
    \overline{F_{1/\bar \l}}^t = F_\l^{-1}\,.
\end{equation}
\end{remark}
\begin{proof} Decomposing $\alpha (\omega) = \alpha'\,dz +
\alpha''\,d\bar{z}$ into $(1,\,0)$ and $(0,\,1)$ parts, we
compute
\begin{equation*} \begin{split}
  &\alpha'_{\bar z} = \frac{1}{4}\,
  \begin{pmatrix}   2\o_{z\bar{z}} & \mi  \lambda^{-1}\o_{\bar{z}} \delta e^\o \\
    -\mi \gamma \o_{\bar{z}}e^{-\o} & - 2\o_{z\bar{z}} \end{pmatrix}\,,\\
  &\alpha''_z = \frac{1}{4}\,
  \begin{pmatrix} -  2\o_{z\bar{z}} & - \mi\bar{\gamma} \o_z  e^{-\o} \\
    \mi  \lambda \o_z \bar \delta e^\o &  2 \o_{z\bar{z}} \end{pmatrix}\,.
\end{split}
\end{equation*}
Using $F_\l^{-1} dF_\l = \a$ in the integrability condition $ (F_\l)_{z \bar z} = (F_\l)_{\bar z z}$ gives
$\alpha'_{\bar z}-\alpha''_z = [\a ' ,\, \a '']$, and a direct computation shows this is equivalent to the sinh-Gordon
equation \eqref{eq:sinh-Gordon}. Hence we can integrate $dF_\l = F_\l \,\alpha$ to obtain a map $F_\l:\C \to \SL (\C)$ and define  $G_\l= F_\l \, \sigma_3 \,F_\l^{-1}$. The vertical component is $h = \tfrac{\mi}{2} ( -(\gamma \delta)^{1/2}\lambda^{-1/2}z + (\gamma \delta)^{-1/2}\lambda^{1/2}\bar{z} $.
Its partial derivatives of are $ h_z = -\tfrac{\mi}{2}(\gamma\delta) ^{1/2}\lambda^{-1/2}$ and $ h_{\bar z} = \tfrac{\mi}{2} (\gamma \delta) ^{-1/2}\lambda^{1/2}$. Then
\begin{align*}
    \langle (X_\l)_z,\, (X_\l)_z \rangle &= -\tfrac{1}{2}\mathrm{tr}\, \bigl( [\alpha',\,\sigma_3]^2 \bigr)
        + ( h_z)^2 = 0\,,\\
    \langle  (X_\l)_{\bar z} ,\, (X_\l)_{\bar z}  \rangle &= -\tfrac{1}{2}\mathrm{tr}\, \bigl( [\alpha'',\,\sigma_3]^2 \bigr)
        + ( h_{\bar z})^2 = 0\,,
\end{align*}
and the conformal factor computes to
\begin{equation*} \begin{split}
 2 \langle (X_\l)_z ,\,(X_\l)_{\bar z} \rangle &= 2 \langle [\alpha',\,\sigma_3],\,[\alpha'',\,\sigma_3] \rangle  + 2\,( h_z)( h_{\bar z}) \\ &= \tfrac{1}{2} \cosh (2\omega) + \tfrac{1}{2} = \cosh^2 (\omega)\,.
\end{split}
\end{equation*}
As $Q=\langle (G_\l)_z,\, (G_\l)_z \rangle \,(dz)^2 = -\tfrac{1}{2}\mathrm{tr}\, \bigl([ \a',\,\sigma_3]^2 \bigr)\,(dz)^2 = 	\tfrac{1}{4}\gamma \delta \l^{-1}\,(dz)^2$ is holomorphic, we conclude that $G$ is harmonic.
\end{proof}
\begin{remark}
\label{IN1}
{\bf Isometric normalisation 1.}
By  conformal parametrization we can choose $4 |\phi|=1$ (the annulus is transverse to horizontal planes). We have constants $\delta, \gamma \in \SY^1$ which are related to the Hopf differential, namely $4 Q=\delta \gamma \l^{-1}\,dz^2$. We can normalize the parametrization with $\delta=1$ and a constant $|\gamma|=1$. For a given extended frame $F_\l$ which satisfies the equation $F_\l^{-1}dF_\l  =\alpha (\o),\,F_\lambda(0) = \un$, we consider the $U(1)$-gauge
\begin{equation} \label{eq:g-delta}
    g(\delta)=\begin{pmatrix} \delta^{1/2} &0 \cr 0 & \delta^{-1/2} \end{pmatrix} \in \bbT\,.
\end{equation}
Then $\tilde F_\l=g(\delta)^{-1} F_\l g(\delta)$ induces the immersion $\tilde X$ which differs from $X$ by a rotation in $\SY^2$.
The third coordinate $h=\tilde h$ is preserved while
$\tilde G_\l (z) =\tilde F_\l \sigma_3 \tilde F_\l ^{-1}=g(\delta)^{-1} G_\l (z) g(\delta)$
and for $\alpha(\omega)$ as in \eqref{eq:symb} with $\tilde \delta=1$ and $\tilde \gamma = \delta \gamma$ have
$$
\tilde F _\lambda ^{-1}d \tilde F_\lambda = g(\delta)^{-1} \alpha (\omega) g(\delta)\,.
$$
\end{remark}


\section{Polynomial Killing fields and spectral curves}
We explain in this section how solutions of the sinh-Gordon equation give rise to polynomial Killing fields as solutions of a Lax equation. Polynomial Killing fields in turn define spectral curves, which are hyperelliptic Riemann surfaces.

If $\o$ is a solution of the sinh-Gordon equation, we consider a deformation $\o_t = \o + t\,u + O(t^2)$. If $\o_t$ is a one parameter family of solutions of the sinh-Gordon equation, then the variational function $u:\C \to \R$ satisfies the {\bf linearized sinh-Gordon equation}
\begin{equation}
\label{eq:lsg}
    \Delta u + u\,\cosh (2\o)  =0\,.
\end{equation}
\begin{definition}
A solution $\o$ of the sinh-Gordon equation is of {\bf finite type} if there exist $g \in \N$ such that
\begin{equation} \label{potentiel}
  \Phi_\l (z) = \frac{\l^{-1}}{4} \begin{pmatrix}   0 &  \mi e^{\o}   \cr 0 & 0   \end{pmatrix}+ \sum_{n=0}^g \l^n \begin{pmatrix}   u_n (z)  & e^{\o} \tau_n (z)  \cr e^{\o} \sigma_n (z)  & -u_n (z)  \end{pmatrix}
\end{equation}
is a solution of the Lax equation
\begin{equation} \label{eq:lax-eq}
d \Phi_\l = [\Phi_\l,\,\a (\o)]
\end{equation}
for some functions $u_n, \tau_n, \sigma_n :  \C \to \C$, and some $\gamma \in \bbS^1$, and $\delta =1$ in $\alpha(\omega)$.
\end{definition}
\begin{proposition} \label{th:PinS-iteration}
Suppose $\Phi_\l$ is of the form \eqref{potentiel} for some arbitrary $\o:\C \to \R$, and that $\Phi_\l$ solves the Lax equation \eqref{eq:lax-eq} with $\alpha (\o)$ as in \eqref{eq:symb}, $\delta=1$ and $|\gamma |=1$. Then:
\begin{enumerate}
\item The function  $\o$ is a solution of the sinh-Gordon equation \eqref{eq:sinh-Gordon}.
\item The functions $u_n$ are solutions of the linearized sinh-Gordon equation \eqref{eq:lsg}.
\item The following iteration gives a formal solution of $d \Phi _\l = [\Phi _\l,\,\a(\o)]$.  \\ Let $u_n, \,\sigma_n, \,\tau_{n-1}$, with $u_n$ solution of \eqref{eq:lsg} be given. Now solve the system
\[
    \tau_{n;\bar z}= \tfrac{1}{2}\,\mi \bar \gamma\, e^{-2\omega} u_n\,,  \qquad
    \tau_{n; z}= 4\mi \bar\gamma\,\omega_z u_{n;z} - 2 \mi \bar\gamma\, u_{n;zz}
\]
for $\tau_{n; z}$ and $\tau_{n;\bar z}$. Then define $u_{n+1}$ and $\sigma_{n+1}$ by
\[
    u_{n+1} = -2\mi \tau_{n;z}-4\mi \omega_z \tau_n\,, \qquad
    \sigma_{n+1} = \gamma\,e^{2\omega} \tau_n + 4\mi\gamma\, u_{n+1; \bar z}\,.
\]
\item Each $\tau_n$ is defined up to a complex constant $c_n$, so $u_{n+1}$ is defined up to $-4\mi c_n \o_z$.
\item $u_0=\omega _z,  u_{g-1}= c\,\omega_{\bar z}$ for some $c \in \C$, and $\l^{g} \overline{ \Phi _{1/ \bar \l}} ^t$ also solves \eqref{eq:lax-eq}.
\end{enumerate}
\end{proposition}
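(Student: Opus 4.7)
The approach is to split the Lax equation $d\Phi_\l = [\Phi_\l,\alpha(\o)]$ into its $dz$ and $d\bar z$ components and expand both sides in powers of $\l$, then match coefficients order by order. Write $\alpha = \alpha'\,dz + \alpha''\,d\bar z$ with $\alpha' = \l^{-1}A_{-1} + A_0$ and $\alpha'' = B_0 + \l B_1$, where $A_{-1}$ is upper-triangular nilpotent and coincides with the $\l^{-1}$ coefficient $\Phi_{-1}$ of $\Phi_\l$, while $B_1$ is lower-triangular nilpotent. Since $\Phi_\l$ contains powers $\l^{-1},\ldots,\l^g$, the commutators $[\Phi_\l,\alpha']$ and $[\Phi_\l,\alpha'']$ range a priori over $\l^{-2},\ldots,\l^g$ and $\l^{-1},\ldots,\l^{g+1}$. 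Each matrix identity at a fixed order of $\l$ supplies three scalar relations, from the trace-free entries $(1,1)$, $(1,2)$, $(2,1)$, and it is from these that both the iteration formulas and the remaining claims will be extracted.

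For part (3), the $\l^{-2}$ coefficient of $[\Phi_\l,\alpha']$ vanishes automatically because $\Phi_{-1} = A_{-1}$. At an intermediate order $\l^n$ with $0 \le n \le g-1$, the $(1,2)$ entry of $\partial_z\Phi_n = [\Phi_n,A_0]+[\Phi_{n+1},A_{-1}]$ produces $u_{n+1} = -2\mi\tau_{n;z} - 4\mi\o_z\tau_n$; the $(1,2)$ entry of $\partial_{\bar z}\Phi_n = [\Phi_n,B_0]+[\Phi_{n-1},B_1]$ produces $\tau_{n;\bar z} = \tfrac{\mi\bar\gamma}{2}e^{-2\o}u_n$; and the $(1,1)$ entry of the $\partial_{\bar z}$ equation, after an index shift, gives $\sigma_{n+1} = \gamma e^{2\o}\tau_n + 4\mi\gamma\, u_{n+1;\bar z}$. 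Inserting these into the $(1,1)$ entry of $\partial_z\Phi_n$ then yields the asserted second-order expression for $\tau_{n;z}$. The $\l^{-1}$-order instance of the $(1,2)$ entry in the $\partial_z$ equation supplies $u_0 = \o_z$, proving the first half of part (5).

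The conceptual core is part (2). The system $\tau_{n;\bar z} = \tfrac{\mi\bar\gamma}{2}e^{-2\o}u_n$ and $\tau_{n;z} = 4\mi\bar\gamma\,\o_z u_{n;z} - 2\mi\bar\gamma\,u_{n;zz}$ admits a solution $\tau_n$ if and only if $\partial_z\tau_{n;\bar z} = \partial_{\bar z}\tau_{n;z}$; computing both cross-derivatives and eliminating $\o_{z\bar z}$ via $4\o_{z\bar z} + \tfrac{1}{2}\sinh(2\o) = 0$ collapses this compatibility condition cleanly to $\Delta u_n + u_n\cosh(2\o) = 0$, which is the linearized sinh-Gordon equation. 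Part (1) in turn follows from $d^2\Phi_\l = 0$: the Jacobi identity reduces this to $[\Phi_\l,\,\partial_{\bar z}\alpha' - \partial_z\alpha'' - [\alpha',\alpha'']] = 0$, and since $\Phi_\l$ is nonscalar and depends nontrivially on $\l$, the inner expression must vanish. This is the Maurer-Cartan equation for $\alpha(\o)$, already shown in the proof of Theorem~\ref{thm:sinh} to be equivalent to the sinh-Gordon equation for $\o$.

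Part (4) is then immediate: the two PDEs for $\tau_n$ determine it up to an additive constant $c_n\in\C$, and substituting $\tau_n \mapsto \tau_n+c_n$ into the formula for $u_{n+1}$ shifts it by $-4\mi c_n\o_z$. For the remaining statements in part (5), the reality identity $\overline{\alpha_{1/\bar\l}}^t = -\alpha_\l$ from Remark~\ref{real}, after conjugate-transposing the Lax equation, shows that $\overline{\Phi_{1/\bar\l}}^t$ solves $d(\overline{\Phi_{1/\bar\l}}^t) = [\overline{\Phi_{1/\bar\l}}^t,\alpha_\l]$, and multiplication by the $z$-independent factor $\l^g$ preserves this, so $\l^g\overline{\Phi_{1/\bar\l}}^t$ is another solution of the Lax equation. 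The $\l^{g+1}$ constraint on $[\Phi_\l,\alpha'']$ forces $\tau_g = u_g = 0$, so the highest nontrivial diagonal coefficient in $\Phi_\l$ sits at order $\l^{g-1}$; the reality involution sends the bottom-order initial data of $\Phi_\l$ to the top-order initial data of $\l^g\overline{\Phi_{1/\bar\l}}^t$, exchanging $z\leftrightarrow\bar z$ and $\gamma\leftrightarrow\bar\gamma$, and applied to the identity $u_0 = \o_z$ yields the mirror statement $u_{g-1} = c\o_{\bar z}$ for some $c\in\C$. The main obstacle is not the coefficient matching, which is mechanical, but the cross-derivative computation in part (2), which precisely identifies the integrability condition as the linearized sinh-Gordon equation, together with the careful bookkeeping in part (5) needed to translate the abstract reality symmetry into concrete information about the top-order data of the polynomial Killing field.
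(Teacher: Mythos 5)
Your overall strategy---expanding the Lax equation in powers of $\l$ and reading off the scalar relations \eqref{eq:psA}--\eqref{eq:psF}---is the same as the paper's, and your parts (1), (3) and (4) are essentially sound; in particular your derivation of (1) from $d^2\Phi_\l=0$ and the Maurer--Cartan equation is a legitimate variant of the paper's direct substitution.

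The gap is in part (2). The integrability condition $\partial_z\tau_{n;\bar z}=\partial_{\bar z}\tau_{n;z}$ for the pair $\tau_{n;\bar z}=\tfrac{\mi\bar\gamma}{2}e^{-2\o}u_n$, $\tau_{n;z}=4\mi\bar\gamma\,\o_z u_{n;z}-2\mi\bar\gamma\,u_{n;zz}$ does \emph{not} collapse to the linearized sinh-Gordon equation. Set $E_n:=\Delta u_n+u_n\cosh(2\o)=4u_{n;z\bar z}+\tfrac12(e^{2\o}+e^{-2\o})u_n$. A direct computation, using $4\o_{z\bar z}=-\tfrac14(e^{2\o}-e^{-2\o})$ to eliminate $\o_{z\bar z}$, gives
\[
\partial_{\bar z}\tau_{n;z}-\partial_z\tau_{n;\bar z}=\mi\bar\gamma\left(\o_z E_n-\tfrac12\,\partial_z E_n\right),
\]
so the compatibility condition is $\partial_z\bigl(e^{-2\o}E_n\bigr)=0$, which is strictly weaker than $E_n=0$ (it holds, for instance, whenever $E_n=h(\bar z)\,e^{2\o}$). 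This weaker statement is exactly what makes the iteration in (3) consistent once $u_n$ is \emph{assumed} to solve \eqref{eq:lsg}, but it does not prove (2). To obtain $E_n=0$ you must also use the $(2,1)$-entry relations, i.e.\ the equations governing $\sigma_{n+1}$ and its $\bar z$-derivative; the paper does this by forming $\bar\partial\eqref{eq:psA}-\tfrac12\mi e^{2\o}\eqref{eq:psF}+\tfrac12\mi\gamma e^{-\o}\eqref{eq:psD}$, which yields $4u_{n;z\bar z}+\tfrac12 u_n(e^{2\o}+\gamma\bar\gamma e^{-2\o})=0$ directly. A smaller remark on (5): $\l^{g}\overline{\Phi_{1/\bar\l}}^t$ has Laurent powers $\l^0,\dots,\l^{g+1}$, so it is not of the form \eqref{potentiel} and the identity $u_0=\o_z$ cannot simply be ``mirrored''; the cleaner route is to observe that the $\l^{g+1}$-coefficient of $[\Phi_\l,\a(\o)]$ forces $u_g=\tau_g=0$, after which \eqref{eq:psE}, \eqref{eq:psB}, \eqref{eq:psD} and \eqref{eq:psF} at the top index give $\sigma_g$ constant and $u_{g-1}=4\mi\sigma_g\,\o_{\bar z}$.
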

\begin{proof}
Inserting \eqref{potentiel} into \eqref{eq:lax-eq} and comparing coefficients yields
\begin{subequations} \label{eq:pinkall-sterling}
\begin{gather}
\label{eq:psA}
	4 u_{n;z} + \mi e^{2\o}\sigma_{n+1} - \mi\gamma\tau_n =0 \,,\\
\label{eq:psB}
	4 u_{n; \bar z}+ \mi\bar \gamma \sigma_n - \mi e^{2\o}\tau_{n-1} =0\,,\\
\label{eq:psC}
	4 \o_z  \tau_n + 2 \tau_{n;z} - \mi u_{n+1} =0\,,\\
\label{eq:psD}
	2e^{\o} \tau_{n;\bar z}- \mi \bar \gamma  e^{-\o} u_n =0\,,\\
\label{eq:psE}
	2e^{\o} \sigma_{n;z}+ \mi  \gamma  e^{-\o} u_n =0\,,\\
\label{eq:psF}
	4 \o_{\bar z}\,\sigma_n + 2 \sigma_{n, \bar z} + \mi  u_{n-1} =0\,.
\end{gather}
\end{subequations}
\noindent (1) Solving \eqref{eq:psB} for $\sigma_{n+1}$, \eqref{eq:psC} for $u_{n+1}$, \eqref{eq:psD} for $\tau_{n;\bar z}$, and inserting these, and $u_{n+1;\bar z}$ and $\tau_{n;\bar z z}$ into \eqref{eq:psA} gives $e^{2\o} \mi \gamma \tau_n (16 \o_{z \bar z} - e^{-2\o} + e^{2\o}) = 0$, which implies \eqref{eq:sinh-Gordon} if $\tau_n \neq 0$.

\noindent (2) $\bar{\partial}\eqref{eq:psA} - \tfrac{1}{2} \mi e^{2\o} \eqref{eq:psF} + \tfrac{1}{2}\mi \gamma e^{-\o}\eqref{eq:psD}$ reads
$$
    4u_{n;z \bar z} + \tfrac{1}{2} u_n \left( e^{2\o} + \gamma \bar \gamma e^{-2\o} \right) = 0\,.
$$
\noindent (3) The equation for $\tau_{n;\bar z}$ is \eqref{eq:psD}. Taking the $z$-derivative of \eqref{eq:psA} and using \eqref{eq:psA}, \eqref{eq:psC} and \eqref{eq:psE} gives $\tau_{n;z}= 4\mi \bar\gamma\o_z u_{n;z} - 2\mi \bar\gamma u_{n;zz}$. The equations for $u_{n+1},\,\sigma_{n+1}$ are given by \eqref{eq:psC} respectively \eqref{eq:psB}.

\noindent (4) In the iteration (3) the function $\tau_n$ is determined up to an integration constant. This gives an additional term  $\o_z$ in $u_{n+1}$.

\noindent (5) Left to the reader.
\end{proof}
Pinkall-Sterling \cite{PinS} constructed a series of special solutions of the induction of Proposition \ref{th:PinS-iteration} (3) via an auxiliary function $\phi$ as follows: For a given solution $u_n$ of the linearized sinh-Gordon equation \eqref{eq:lsg}, consider the function $\phi : \C \to \C$ defined by
\begin{equation}\label{eq:iteration-phi}
    \phi _{n;z} =  4 \o_z u_{n;z} \,, \qquad
    \phi_{n;\bar z}=   -u_{n} \sinh \o \cosh \o\,.
\end{equation}
Then $\tau_n = 2\mi \bar\gamma\,\left( \tfrac{1}{2}\phi_n - u_{n;z} \right)$ and $u_{n+1}:=(u_{n})_{zz}- \o_z \phi_{n}$.
This defines a hierarchy of solutions of \eqref{eq:lsg}.
Applying this iteration to the trivial solution $u_{0} \equiv 0$ yields the sequence,
whose first four terms are
\begin{equation*}\begin{split}
&u_{0}=0\,,\\
&u_{1}= \o_z \,,\\
&u_{2}= \o_{zzz}-2\o_z^3\,,\\
&u_{3}= \o_{zzzzz}-10 \o_{zzz}\o_{z}^3-10 \o^2_{zz} \o_{z} + 6\o_{z}^5 \,.
\end{split}
\end{equation*}
This infinite sequence produces solutions of the linearized sinh-Gordon equation on $\C$. These come from the iteration \eqref{eq:iteration-phi}, and Pinkall-Sterling prove in Proposition~3.1 \cite{PinS}, that $\phi_n$ depends only on $\o$ and its $k$-th derivatives with $k \leq 2n+1$. The fact that we consider on $\C$ a uniformly bounded solution of the sinh-Gordon equation $\o : \C \to \R$, implies by Schauder estimates that each $u_n$ is uniformly bounded on $\C$.
\begin{proposition}
A proper minimal annulus $A$ immersed in $\SY^2 \times \R$ with bounded curvature and linear area growth has a metric $\rmd s^2=\cosh ^2 \o |\rmd s|^2$, where $\o:\C/\tau\C \to \R$ is a finite type solution of the sinh-Gordon equation. We say that the annulus is of finite type.
\end{proposition}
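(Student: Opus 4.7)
The plan is to show that $\o$, already known by Theorem~\ref{immersionsinhgordon} to be a bounded solution of the sinh-Gordon equation on $\C/\tau\Z$, is of finite type in the sense of the definition preceding Proposition~\ref{th:PinS-iteration}, i.e.\ admits a polynomial Killing field $\Phi_\l$ of the form \eqref{potentiel} solving the Lax equation \eqref{eq:lax-eq}. By Proposition~\ref{th:PinS-iteration}, constructing such $\Phi_\l$ order by order in $\l$ reduces to running the Pinkall-Sterling recursion \eqref{eq:iteration-phi}, producing a sequence $u_0,u_1,u_2,\dots$ of solutions of the linearized sinh-Gordon equation \eqref{eq:lsg}, and then truncating it once the $u_n$ exhibit a linear relation compatible with the top-order term of the ansatz.

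The first step is to extract uniform estimates from the hypotheses. Since $A$ is properly parameterized by $\C/\tau\Z$ with metric $\cosh^2\o\,|\rmd z|^2$, linear area growth means $\int\cosh^2\o\,\rmd x\,\rmd y$ over $\{|\mathrm{Re}\,z|\leq T\}\cap(\C/\tau\Z)$ is $O(T)$, which controls $\o$ in an averaged sense. Bounded curvature is equivalent (via the Gauss equation and \eqref{eq:sinh-Gordon}) to a uniform bound on $|\nabla\o|^2/\cosh^2\o$ and on $\sinh\o\cosh\o$. Together with $\tau$-periodicity and standard Schauder/elliptic regularity applied to \eqref{eq:sinh-Gordon}, these force uniform $C^k$-bounds on $\o$ on the cylinder for every $k$. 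Then, by Proposition~3.1 of \cite{PinS}, each $u_n$ generated by \eqref{eq:iteration-phi} is a universal differential polynomial in $\o$ of order at most $2n+1$, hence every $u_n$ lies in a uniformly bounded set of $\tau$-periodic smooth solutions of \eqref{eq:lsg}.

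The crucial and, in my view, main obstacle is the following finite-dimensionality statement: the space $V$ of uniformly bounded $\tau$-periodic solutions of the linearized sinh-Gordon operator $\calL_\o := \Delta + \cosh(2\o)$ on $\C/\tau\Z$ is finite-dimensional. The argument relies on Floquet-Bloch theory for the periodic elliptic operator $\calL_\o$, whose resolvent set is connected and unbounded; equivalently, one identifies $V$ with the kernel of a Fredholm monodromy construction attached to the Lax pair \eqref{eq:symb}, whose Floquet eigenvalues parameterize (a precursor of) the spectral curve. In particular $\dim V$ is bounded in terms of the number of real Floquet multipliers, giving a finite integer $g$.

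Given $\dim V\leq 2g+1<\infty$, the infinite sequence $(u_n)$ in $V$ must admit a nontrivial linear relation $\sum_{n=0}^{g} c_n u_n = 0$. Using the freedom in the choice of integration constants in parts (3)--(4) of Proposition~\ref{th:PinS-iteration}, this relation is equivalent to being able to set $u_{g+1}=0$ at the top order of the iteration, so that the formal expansion truncates. Assembling the resulting $u_n,\tau_n,\sigma_n$ into the ansatz \eqref{potentiel} and invoking Proposition~\ref{th:PinS-iteration} to verify that the resulting $\Phi_\l$ indeed solves the Lax equation \eqref{eq:lax-eq} certifies that $\o$ is of finite type with spectral genus at most $g$, completing the proof.
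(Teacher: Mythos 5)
Your overall outline matches the paper's: establish a uniform bound on $\o$, obtain finite-dimensionality of the space of bounded solutions of the linearized equation, deduce a linear relation among the Pinkall--Sterling iterates $u_n$, and truncate. But there is a genuine gap at the first and most substantial step. You assert that bounded curvature is ``equivalent, via the Gauss equation, to a uniform bound on $|\nabla\o|^2/\cosh^2\o$ and on $\sinh\o\cosh\o$.'' The second claim is false: with $\rmd s^2=\cosh^2\o\,|\rmd z|^2$ one computes $K=\tanh^2\o-|\nabla\o|^2/\cosh^4\o$ and $|A|^2=2(\mathrm{sech}^2\o-K)$, so a bound on the second fundamental form controls only $|\nabla\o|^2/\cosh^4\o$; it gives no pointwise control on $\o$ itself (and a bound on $\sinh\o\cosh\o$ would already be the desired bound on $\o$, so you are assuming what must be proved). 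Likewise, linear area growth only gives the averaged bound you mention, not a pointwise one. The paper's proof of $\sup|\o|\leq C_0$ is a genuinely geometric blow-up argument: if $\o(p_n)\to\pm\infty$, translate vertically so that $p_n$ lands on $\SY^2\times\{0\}$, use bounded curvature to extract a limit properly immersed minimal surface $A_0$ and linear area growth to ensure $A_0$ is again an annulus; since $n_3(p_n)=\tanh\o(p_n)\to\pm1$, the limit annulus would be tangent to the horizontal plane $\SY^2\times\{0\}$, contradicting the transversality of proper minimal annuli to horizontal planes (established earlier via the maximum principle). This is where both hypotheses are actually used, and it is missing from your argument.

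A secondary concern: for the finite-dimensionality of the space $V$ of bounded solutions of $\Delta u+\cosh(2\o)\,u=0$ on $\C/\tau\Z$, you invoke Floquet--Bloch theory, but the potential $\cosh(2\o)$ is periodic only in the compact ($\tau$-) direction of the cylinder; it need not be periodic in the non-compact direction, so Floquet theory does not apply there as stated. The paper instead cites the Meeks--P\'erez--Ros theorem, which requires only that $q$ be bounded and continuous on $\SY^1\times\R$. Your final truncation step (using the linear relation $\sum c_n u_n=0$ and the integration constants of Proposition~\ref{th:PinS-iteration} to kill the $(g+1)$-st coefficient) agrees with the paper.
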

\begin{proof} A first step is to prove that the function $\o:\C/\tau\C \to \R$ is uniformly bounded. Consider a sequence of points $p_n$ in $A$ such that $\o (p_n)$ is diverging to infinity and consider a sequence of translations $t_n e_3$ such that $A + t_n e_3$ is a sequence of annuli with $p_n + t_n e_3$ points of $\SY^2 \times \{0\}$. Then by the bounded curvature hypothesis there is a sub-sequence converging locally to $A_0$, a properly immersed minimal surface. The linear area growth assumption assure that $A_0$ is an annulus. But our hypothesis leads to a pole occurring at the height $t =0$ since $|\o| \to \infty$. The limit normal vector $n_3 (p_n) =\tanh \o_n (p_n) \to \pm 1$ and the annulus $A _0$ would be tangent to the height $\SY^2 \times \{0\}$, a contradiction to the maximum principle. Thus
$$\sup _{z\in A} |\o| \leq C_0\,.$$
Now we apply Schauder estimates to the sinh-Gordon equation to obtain a ${\mathcal C}^{k,\a}$ estimate on the solution of the sinh-Gordon equation on $\C / \tau \bbZ$. There exists a constant $C_0>0$ such that for any $k \in \N$
$$|\o|_{A, k,\alpha} \leq C_0\,.$$
Meeks-Perez-Ros \cite{MPR} provide us with the following
\begin{theorem}
\label{finitekernel}\cite{MPR}
An elliptic operator $L u= \Delta u + q\,u$ on a cylinder $\SY^1 \times \R$ has for bounded and continuous $q$ a finite dimensional kernel on the space of uniformly bounded ${\mathcal C}^2$ -functions on $\SY^1 \times \R$.
\end{theorem}
Since solutions $u_0,\,u_1,\,u_2...$ of the linearized sinh-Gordon equation are solutions depending only on $\o$ and its higher derivatives, this family is a finite dimensional family by Theorem~\ref{finitekernel}. Thus there is a $g \in \N$, and there exist $a_i \in\C$ such that
$$
	\sum _{i=1}^g   a_i u_{i}=0\,.
$$
This algebraic relation implies that $\o$ is of finite type, and ensures the existence of a polynomial Killing field $\Phi_\l$ of degree $g$. To achieve that, one has to prescribe the right constants $c_0,\,c_1,\,\ldots,\,c_g$ in the iteration procedure, and set the $g+1$-coefficient to zero.
\end{proof}
%
%
\subsection*{Potentials and polynomial Killing fields.} To parameterize real solutions of the sinh-Gordon equation we make the following
\begin{definition}\label{def pot}
The set of potentials is
\begin{align*}
    {\mathcal P}_g= \{ \,\xi_\l=\sum_{d=-1}^{g}\hat \xi_d\lambda^d  \mid  \hat  \xi_{-1}\in  \bigl( \begin{smallmatrix} 0 &  \mi\R^+ \cr 0 & 0 \end{smallmatrix} \bigr)  , {\rm tr}(\hat \xi_{-1}\hat  \xi_0) \neq 0,\, \\
    \hat \xi_d=-\overline {\hat \xi_{g-1-d}}^t\in\Sl(\C)\mbox{ for }d=-1,\ldots,g \, \}.
\end{align*}
\end{definition}
\begin{remark}
\label{IN2} {\bf Isometric normalization 2.} For $\delta \in \SY^1$, we denote by $\pk (\delta)$, potentials with residues
$$
    \hat  \xi_{-1}  \in  \bigl( \begin{smallmatrix} 0 &  \mi \delta\R^+ \cr 0 & 0 \end{smallmatrix} \bigr)\,.
$$
These correspond to the normalization of remark \ref{real} and there is an isomorphism $\pk (\delta) \to \pk$, given by
$$
    \xi_\l \mapsto g(\delta)^{-1} \xi_\l g(\delta)\,.
$$
\end{remark}
Each $\xi_\l \in\pk$ satisfies the {\bf reality condition}
\begin{equation} \label{realcoeff}
	\l^{g-1} \overline{ \xi _{1/\bar \l}}^t = - \xi _\l\,.
\end{equation}
In other words, for $\xi_\l \in \pk$, we have a map $\SY^1 \to \su, \lambda \mapsto \lambda^{\frac{1-g}{2}} \xi_\lambda$. The polynomial
\begin{equation*}
    a(\l) : = - \l \det \xi _\l
\end{equation*}
then satisfies the reality condition
\begin{equation} \label{eq:real}
	\l^{2g}\overline{ a (1/\bar \l)} = a (\l)\,.
\end{equation}
On $\su$ the determinant is the square of a norm, thus we have for $\l \in \SY^1$ that
\begin{equation}\label{eq:hermit}
 	\l^{-g}\,a(\l)  \leq 0 \hbox{ for } \l \in \SY^1\,.
\end{equation}
When $g$ is even, $\hat \xi_0,...,\hat \xi_{\frac{g}{2}-1}$ are independent $2 \times 2$ traceless complex matrices. For odd $g$, $\hat \xi_0,...,\hat \xi_{\frac{g-3}{2}}$ are independent $2 \times 2$ traceless complex matrices and
$\hat \xi_{\frac{g-1}{2}} \in \su$. Thus the space of potentials ${\mathcal{P}}_g$ of real finite type solutions of the $\sinh$-Gordon equation is an open subset of a $3g+1$ dimensional real vector space.  The condition ${\rm tr}(\hat \xi_{-1} \hat \xi_0) \neq 0$ implies that $a(0)\neq 0$ and by symmetry the highest coefficient of $a$ is therefore non-zero. Thus $\lambda \mapsto a(\lambda)$ is a polynomial of degree $2g$ with complex coefficients, and we denote such by $\C^{2g} [\l]$. Define
\begin{equation} \label{eq:calM-sets} \begin{split}
    {\mathcal{M}}_g &=\{ a \in \C^{2g} [\l] \mid a(\l)= -\l \det \xi _\l \hbox{ with } \xi_\l \in {\mathcal{P}}_g \} \\
                                  &  = \{ a \in \C^{2g} [\l] \mid  a(0)\neq 0,   \l^{2g}\overline{ a (1/\bar \l)} = a (\l) \\
                                    &\qquad\qquad \hbox{ and }\l^{-g}\,a(\l)  \leq 0 \hbox{ for } \l \in \SY^1  \}\,, \\
    {\mathcal{M}}_g^0 & =\{ a \in {\mathcal{M}}_g  \mid  \l^{-g} a(\l) < 0 \hbox{ for } |\l|=1 \}\,.
\end{split}
\end{equation}
Thus ${\mathcal{M}}_g^0$ is an open subset of the $2g+1$ dimensional real vector space ${\mathcal{M}}_g$.
\begin{definition}\label{def pol}
Polynomial Killing fields are maps $\zeta_\l : \C \to \pk$
(see Definition~\ref{def pot}) which solve the Lax equation
$$
    d\zeta _\l = [\,\zeta_\l,\,\alpha(\omega)\,] \qquad\mbox{ with }\quad
    \zeta_\l(0)=\xi_\l \in \pk.
$$
\end{definition}
We use solutions $\Phi_\l$ of the Lax equation to construct polynomial Killing related to a finite type solution $\o : \C \to \R$ of the sinh-Gordon equation.
\begin{lemma}
For a solution $\Phi_\l$ of \eqref{eq:lax-eq} with $\delta =1$ in $\a (\o)$ (see formula (\ref{eq:symb})), there exists constants $\gamma \in \SY^1$ and
$k \in \R^+$  such that
$$
    \zeta_\l (z)=k \Phi_\l (z) - k \l^{g-1}\; \overline{ \Phi _{1/ \bar \l} (z)}^{t} \hbox{ and } \zeta_\l (0)=\xi_\l
$$
is a polynomial Killing field.
\end{lemma}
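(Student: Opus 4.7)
The plan is to verify each clause of Definition~\ref{def pot} for the proposed $\zeta_\lambda$ in turn. The Lax equation and the correct Laurent range follow by linearity once one observes that $\overline{\Phi_{1/\bar\lambda}}^{\,t}$ also satisfies $d\Phi_\lambda = [\Phi_\lambda,\alpha(\omega)]$; this follows from the reality identity $\overline{\alpha_{1/\bar\lambda}}^{\,t} = -\alpha_\lambda$ (already invoked in Remark~\ref{real} and checked directly from \eqref{eq:symb} using $\gamma,\delta\in\SY^1$) together with $\overline{[A,B]}^{\,t} = -[\overline{A}^{\,t},\overline{B}^{\,t}]$. Multiplication by the $z$-constant $\lambda^{g-1}$ preserves the Lax equation, so $\zeta_\lambda$ is a Lax solution with Laurent range from $\lambda^{-1}$ to $\lambda^g$.

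A direct substitution yields $\lambda^{g-1}\,\overline{\zeta_{1/\bar\lambda}}^{\,t} = \bar k\bigl(\lambda^{g-1}\,\overline{\Phi_{1/\bar\lambda}}^{\,t} - \Phi_\lambda\bigr)$, which equals $-\zeta_\lambda$ iff $k\in\R$, so the reality condition \eqref{realcoeff} reduces to choosing $k$ real. To pin down the form of the leading residue, I read off the $\lambda^{g+1}$-coefficient of $[\Phi_\lambda,\alpha(\omega)]$: since $d\Phi_\lambda$ carries no such term, the commutator must vanish, and the only surviving bracket $[\hat\Phi_g,\alpha^{(+1)}]$, with $\alpha^{(+1)} = \tfrac{\mi\bar\delta e^{\omega}}{4}\bigl(\begin{smallmatrix}0&0\\1&0\end{smallmatrix}\bigr)$, vanishes iff $u_g\equiv 0\equiv\tau_g$. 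Consequently $\overline{\hat\Phi_g(0)}^{\,t}$ is an upper-triangular nilpotent multiple of $\bigl(\begin{smallmatrix}0&1\\0&0\end{smallmatrix}\bigr)$, and
\[
\hat\zeta_{-1}(0)\;=\;k\,e^{\omega(0)}\!\left(\tfrac{\mi}{4}-\overline{\sigma_g(0)}\right)\bigl(\begin{smallmatrix}0&1\\0&0\end{smallmatrix}\bigr),
\]
which already has the upper-triangular nilpotent form required by Definition~\ref{def pot}.

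The hard part is the remaining clause: forcing this $(1,2)$-entry into $\mi\R^+$. This is where the freedom in $\gamma\in\SY^1$ enters. Tracking the $\gamma$-dependence of the recursion of Proposition~\ref{th:PinS-iteration}(3) shows that each $\sigma_n$ acquires a well-defined monomial in $\gamma$, so rotating $\gamma$ over $\SY^1$ rotates the phase of $\sigma_g(0)$ freely around the origin of $\C$; one selects $\gamma$ so that $\tfrac{\mi}{4}-\overline{\sigma_g(0)}\in\mi\R^+$, and then chooses $k\in\R^+$ to normalize its modulus. The trace non-degeneracy $\tr(\hat\zeta_{-1}\hat\zeta_0)\neq 0$ follows because Proposition~\ref{th:PinS-iteration}(5) gives $u_{g-1} = c\,\omega_{\bar z}$ with $c$ detecting that the Killing field has genus exactly $g$, so that the symmetrization endows $\hat\zeta_0$ with a non-trivial off-diagonal entry paired to $\hat\zeta_{-1}$.
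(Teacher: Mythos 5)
Your overall route is the paper's: verify the reality condition \eqref{realcoeff} by direct substitution (forcing $k\in\R$), identify the $\l^{-1}$-coefficient as $k\,e^{\o}\bigl(\tfrac{\mi}{4}-\bar\sigma_g\bigr)\bigl(\begin{smallmatrix}0&1\\0&0\end{smallmatrix}\bigr)$, and use the fact that $\sigma_g$ depends on $\gamma$ through the monomial $\gamma^{1-g}$ to rotate $\tfrac{\mi}{4}-\bar\sigma_g$ onto $\mi\R^+$ and then normalize by $k\in\R^+$. The paper implements the rotation through the gauge $g(\delta)$ of Remark~\ref{IN3}, i.e.\ $\sigma_g(\delta^{-1}\gamma)=\delta^{g-1}\sigma_g(\gamma)$, rather than by tracking the recursion, but that is the same mechanism; your explicit derivation of $u_g\equiv\tau_g\equiv 0$ from the vanishing of the $\l^{g+1}$-coefficient of $[\Phi_\l,\a(\o)]$ is a step the paper leaves implicit and is a genuine improvement in completeness. (Both arguments tacitly require the exponent $1-g$ to be nonzero for the phase of $\sigma_g$ to be freely rotatable; neither you nor the paper addresses $g=1$ separately.)

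The one step that does not hold up is your justification of ${\rm tr}(\hat\zeta_{-1}\hat\zeta_0)\neq 0$. Since $\hat\zeta_{-1}$ is proportional to $\bigl(\begin{smallmatrix}0&1\\0&0\end{smallmatrix}\bigr)$, this trace picks out the lower-left entry of $\hat\zeta_0$, namely $k\,e^{\o}\bigl(\sigma_0-\overline{\tau_{g-1}}\bigr)$. The quantity $u_{g-1}=c\,\o_{\bar z}$ from Proposition~\ref{th:PinS-iteration}(5) sits on the \emph{diagonal} of $\hat\Phi_{g-1}$ and therefore only contributes to the diagonal of $\hat\zeta_0$; it cannot ``endow $\hat\zeta_0$ with a non-trivial off-diagonal entry paired to $\hat\zeta_{-1}$,'' so your stated reason does not establish the claim. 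The paper's argument is instead a direct computation: the bottom of the iteration forces $\sigma_0=\tfrac{\mi}{4}\gamma e^{-2\o}$, whence ${\rm tr}(\hat\xi_{-1}\hat\xi_0)=\bigl(\tfrac{\mi}{4}e^{\o}\bigr)\bigl(\tfrac{\mi}{4}\gamma e^{-\o}\bigr)=-\gamma/16\neq 0$. You should replace your last sentence by this computation, and, to match Definition~\ref{def pol}, add the observation (Remark~\ref{laxomega}) that the Lax flow preserves $\pk$, so that $\zeta_\l(z)\in\pk$ for all $z$ and not only at $z=0$.
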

\begin{proof}
The map $\zeta_\l$ satisfies the reality condition (\ref{realcoeff}) so that
$$\l^{g-1} \overline{ \zeta _{1/\bar \l} (z)}^t = - \zeta _\l (z).$$
It remains to prove that
the residues $\hat \zeta_{-1}$ and $\hat \xi_{-1}$ are upper triangular
with purely imaginary non-zero coefficient and ${\rm trace}(\hat \xi_{-1}\hat  \xi_0) \neq 0$. We use the following
\begin{remark}
\label{IN3}{\bf Isometric normalization 3.} We write $\a_{\l, \delta, \gamma} (\o) := \a (\o)$ for the $1$-form \eqref{eq:symb} and $\Phi _{\l, \delta, \gamma}(z) :=\Phi _{\l} (z)$ the associate solution \eqref{potentiel} of the Lax equation \eqref{eq:lax-eq}. We use the unitary matrix $g(\delta)$ defined in \eqref{eq:g-delta}. Now $\Phi _{\l \delta, 1, \gamma}$ solves \eqref{eq:lax-eq} with $\a_{\l\delta , 1, \gamma} (\o)=\a_{\l, \delta^{-1}, \gamma} (\o)$, so $g(\delta) \Phi _{\l \delta, 1, \gamma} g(\delta)^{-1}$ solves \eqref{eq:lax-eq} with $\a (\o)=
g(\delta)  \a_{\l, \delta^{-1}, \gamma}g(\delta)^{-1}= \a_{\l, 1, \delta^{-1}\gamma}$. We conclude that
$$
    \Phi _{\l , 1, \delta^{-1}\gamma}=g(\delta) \Phi _{\l \delta, 1, \gamma} g(\delta)^{-1}\,.
$$
In particular, if $\xi_\l \in \pk (\delta)$, then a solution of the Lax equation of definition \ref{def pol} satisfies $\zeta_\l : \C \to  \pk (\delta)$.
This can be deduced from proposition \ref{th:PinS-iteration}, where $\tau_{-1}(z)=\tau_{-1}(0)$.
\end{remark}
The remark \ref{IN3} proves that changing $\gamma$ by $\delta^{-1} \gamma$ changes the highest coefficient of $\Phi_\l$ by
$$
    \sigma_g (\delta^{-1} \gamma)=\delta^{g-1}\sigma_g (\gamma)\,.
$$
We compute the residue $\hat \xi_{-1}$ at $z=0$ with $\tilde \gamma= \delta ^{-1} \gamma$, and choose a unimodular number $\delta$, such that
$$
    \tfrac{1}{4}\mi e^{\o} - \bar \sigma_g (\tilde \gamma) e^{\o}= \tfrac{1}{4} \mi e^{\o} - \delta^{1-g}\bar \sigma_g (\gamma) e^{\o} \in \mi \R^+\,.
$$
We choose $k$ to normalize the residue with $k ^{-1} =1+4 \mi\delta^{1-g}\bar \sigma_g (\gamma) \in \R^+$.
There remains to compute with $\sigma_0= \mi \gamma e^{-2\o}/4$,
$$
    {\rm tr}(\hat \xi_{-1}\hat  \xi_0)=\left( \tfrac{1}{4}\mi e^{\o}\right) \left( \tfrac{1}{4}\mi \gamma e^{-\o}\right)=-\gamma /16 \neq 0\,.
$$
Since $\xi_\l \in \pk$, the Lax equation assures that $\zeta_\l (z) \in \pk$ (see remark \ref{laxomega}).
\end{proof}
\subsection*{Spectral curve.} Suppose $\xi_\l \in \pk$ and $\zeta_\l$ is the polynomial Killing field with $\zeta_\l(0) = \xi_\l$. Suppose further that the polynomial $a(\lambda) = -\lambda \det \xi_\l$ has $2g$-pairwise distinct roots. Define
\begin{equation} \label{eq:sigma} \begin{split}
  \Sigma^* &= \{(\nu,\l) \in \C^2 \mid \det(\nu\; \un - \zeta_\l)=0\} \\
    &= \{ (\nu,\,\l) \in   \C^2 \mid \nu ^2 =-\det \xi _\l=\l^{-1}a(\l)\}\,.
    \end{split}
\end{equation}
By construction we have a map $\lambda:\Sigma^* \to \C^\times$ of degree 2, which is branched at the $2g$ simple roots of the polynomial $a$. By declaring the points over $\lambda = 0,\,\infty$ to be two further branch points, we then have $2g+2$ branch points. This 2-point compactification $\Sigma$ is called the {\bf{spectral curve}} of the polynomial Killing field $\zeta_\l$.

The Riemann-Hurwitz formula gives that the spectral curve $\Sigma$ is a hyperelliptic Riemann surface of genus $g$, and its genus is called the {\bf spectral genus}. It has three involutions
\begin{equation}\label{eq:involutions} \begin{split}
\sigma&:(\lambda,\nu)\mapsto(\lambda,-\nu) \,,\\
\varrho&:(\lambda,\nu)\mapsto(\bar{\lambda}^{-1},- \bar{\lambda}^{1-g}\bar{\nu})\,,\\
\eta&:(\lambda,\nu)\mapsto(\bar{\lambda}^{-1},\bar{\lambda}^{1-g}\bar{\nu})\,.
\end{split}
\end{equation}
The involution $\sigma$ is called the hyperelliptic involution. Note that $\eta$ has no fixed points ($a(1) \in \R^-$) and $\varrho$ fixes $\SY^1$ pointwise. In particular, roots of $a$ are symmetric with respect to inversion across the unit circle so that $a(\alpha_i) = 0 \Leftrightarrow  a(1/ \bar{\alpha}_i) = 0$.


\section{Harmonic maps and Weierstrass representation}
The generalized Weierstrass representation \cite{DorPW} gives a correspondence between harmonic maps, extended frames and potentials. To formulate the generalized Weierstrass representation for harmonic maps into $\SY^2$ we need various loop groups and a loop group factorization.

For real $r \in (0,1]$, denote the circle $ \SY_r=\{ \l \in \C \mid |\l|=r \}$, the disk $I_r=\{ \l \in \C \mid |\l| <r \}$ and the annulus $A_r = \{ \l \in \C \mid r < | \l | < 1/r \}$. The loop group of $\SL(\C)$ is the infinite dimensional Lie group $\Lambda_r \SL(\C)={\mathcal{O}}(\SY_r,\SL (\C))$ of analytic maps $\SY_r \to \SL (\C)$.

We need two subgroups. The first is
$$
\Lambda _r\SU  = \{ F_\l  \in \mathcal{O}(A _r,\, \SL ( \C) \,\mid\,  F_{\l \in  \SY^1}\in \SU \}\,.
$$
Thus
\begin{equation}
  F_\l  \in \Lambda_r\SU \Longleftrightarrow \overline{F_{1/\bar{\lambda}}}^t = F^{-1}_\l\,.
\end{equation}
The second subgroup that participates in the Iwasawa decomposition is
\begin{equation*}
\begin{split}
\Lambda _r^+ \SL (\C) &=\{  B_\l \in  {\mathcal{O}}(I_r \cup \SY_r,\SL (\C)) \mid
B_0= \bigl( \begin{smallmatrix} \rho & c  \cr 0 & 1/\rho \end{smallmatrix} \bigr) \\
    &\qquad \qquad \hbox{ for } \rho \in \R^+ \hbox{ and } c\in \C \}\,.
\end{split}
\end{equation*}
The normalization that $B_0$ is upper-triangular with real diagonals ensures that
$$
\Lambda_r \SU \cap \Lambda_r^+ \SL(\C) = \{ \mathbbm{1} \}\,.
$$
The following important result is due to Pressley-Segal \cite{PreS}, and generalized by McIntosh \cite{McI}.
\begin{theorem}
Multiplication $\Lambda_r \SU  \times \Lambda _r^+\SL (\C) \to \Lambda_r \SL (\C)$ is a real analytic diffeomorphism onto. The unique splitting of $\phi_\l \in \Lambda_r \SL (\C)$ into $\phi _\l= F_\l\, B_\l$ with $F_\l \in \Lambda_r \SU$ and $B _\l \in  \Lambda _r^+ \SL (\C)$ is called the {\bf r-Iwasawa decomposition} of $\phi_\l$ or just {\bf Iwasawa decomposition} when
$r=1$.
\end{theorem}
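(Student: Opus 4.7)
The plan is to prove the theorem in three stages: uniqueness, smoothness, and surjectivity. Smoothness is immediate since matrix multiplication is real analytic on the Banach (or Fréchet) spaces of analytic loops defining the two factors.

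For uniqueness, the identity $\Lambda_r \SU \cap \Lambda_r^+ \SL(\C) = \{\mathbbm{1}\}$ noted in the text suffices: from $F_1 B_1 = F_2 B_2$ one gets $H := F_2^{-1} F_1 = B_2 B_1^{-1}$ lying in both subgroups. To verify the intersection triviality I would argue by analytic continuation. Such an $H$ extends holomorphically to $\overline{I_r}$ (from its realization as an element of $\Lambda_r^+ \SL(\C)$) and to $A_r$ (from $\Lambda_r \SU$); these agree on $\SY_r$ and so patch to a holomorphic map on the disk $\{|\lambda| < 1/r\}$. The unitarity identity $(\overline{H(1/\bar\lambda)}^t)^{-1} = H(\lambda)$ then provides a holomorphic extension across $\SY_{1/r}$ to all of $\P^1$, so $H$ is a constant matrix; the normalization of $B_0$ as upper triangular with positive diagonal combined with unitarity at $\lambda=0$ forces $H = \mathbbm{1}$.

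For local surjectivity at the identity I would apply the inverse function theorem, which reduces the statement to the Lie algebra direct sum $\Lambda_r \Sl(\C) = \Lambda_r \su \oplus \Lambda_r^+ \Sl(\C)$. I would establish this splitting by explicit manipulation of Laurent coefficients: given $\xi = \sum_n \xi_n \lambda^n$ analytic on a neighborhood of $\SY_r$, its negative-index part furnishes $\eta_n = \xi_n$ for $n < 0$; the reality reflection $\eta_n = -\overline{\eta_{-n}}^t$ then determines the positive coefficients of $\eta \in \Lambda_r \su$; subtracting from $\xi$ produces the positive-index coefficients of $\zeta \in \Lambda_r^+ \Sl(\C)$; and the $\lambda = 0$ ambiguity is resolved by the pointwise decomposition $\Sl(\C) = \su \oplus \{\text{upper triangular with real diagonal}\}$. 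Left and right translation by the two subgroups then promote multiplication to a local diffeomorphism everywhere, so its image is open.

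Global surjectivity then follows by combining openness of the image with closedness and the connectedness of $\Lambda_r \SL(\C)$ (inherited from simple connectedness of $\SL(\C)$). Closedness is the main obstacle: for $\phi_n = F_n B_n \to \phi$ one must extract convergent subsequences $F_n \to F$ and $B_n \to B$. The key a priori bound is pointwise unitarity of $F_n$ on $\SY^1$, giving uniform $C^0$-bounds there; Cauchy's formula applied to $F_n$ on $A_r$ then yields uniform analytic bounds on compact subsets, and $B_n = F_n^{-1} \phi_n$ inherits corresponding bounds from $\phi_n$. A normal-families argument produces subsequential limits, which one verifies land in the correct subgroups and factor $\phi$. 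This analytic step is the technical core of the Pressley-Segal \cite{PreS} and McIntosh \cite{McI} proofs, where it is carried out via the Grassmannian / Fredholm-operator framework on $L^2(\SY_r, \C^2)$, with the existence of the factorization corresponding to invertibility of a projection onto the ``positive'' subspace.
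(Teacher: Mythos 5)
The paper does not actually prove this statement: it is quoted as an external result of Pressley--Segal \cite{PreS}, as generalized by McIntosh \cite{McI}, so there is no internal proof to compare against. Judged on its own terms, your uniqueness argument is correct and complete in outline (the analytic continuation of $H$ to $\{|\lambda|<1/r\}$, the reflection $\lambda\mapsto(\overline{H(1/\bar\lambda)}^t)^{-1}$ extending it to $\bbC\mathrm{P}^1$, Liouville, and then unitary $+$ upper triangular with positive diagonal forcing $H=\mathbbm{1}$), and your Lie-algebra splitting $\Lambda_r\Sl(\C)=\Lambda_r\su\oplus\Lambda_r^+\Sl(\C)$ via Laurent coefficients is also sound, including the pointwise decomposition of the constant term; one should just record the elementary estimate showing that the reflected series converges on a neighbourhood of $\overline{A_r}$, which it does because the negative coefficients of an analytic loop on $\SY_r$ decay like $(r-\e)^{|n|}$.

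The genuine gap is in the closedness step. Unitarity of $F_n$ on $\SY^1$ gives uniform bounds only on the single circle $\SY^1$, and neither Cauchy's formula nor the maximum principle propagates a bound from an interior circle to the rest of the annulus $A_r$: Cauchy estimates at a point of $A_r$ off the unit circle require control on contours enclosing that point. Even adding the symmetry $\|F_n(\lambda)\|=\|F_n(1/\bar\lambda)\|$ (which holds for $\SL$-valued loops satisfying the reality condition) does not help, since by Hadamard's three-circles theorem $\log\max_{|\lambda|=s}\|F_n\|$ is a convex function of $\log s$ that vanishes at $s=1$ and is symmetric under $s\mapsto 1/s$, and such functions can be arbitrarily large on $\SY_r$. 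So the claimed a priori compactness of the factors does not follow from the stated bound, and this is precisely why the existence half of the factorization in \cite{PreS,McI} is carried out through the Grassmannian/Fredholm machinery (or, equivalently, through a Birkhoff-type Riemann--Hilbert argument) rather than by normal families. You correctly identify where the real work lies, but as written the "open $+$ closed $+$ connected" route is not self-contained: the closedness step is the theorem, not a routine verification.
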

Before specializing to the finite type theory of harmonic maps $G:\C \to \bbS^2$, let us briefly recall the generalized Weierstrass representation \cite{DorPW}. Set
$$
\Lambda_{-1}^\infty \mathfrak{sl}_2 (\C)  = \{ \xi_\l \in {\mathcal{O}}(\C^\times ,\,\mathfrak{sl}_2 (\C)) \,\mid \, \left.(\lambda \xi_\l)\right|_{\lambda =0} \in \bigl( \begin{smallmatrix} 0 & \C^\times  \\ 0 & 0 \end{smallmatrix} \bigr) \}\,.
$$
A {\bf potential} is a holomorphic 1-form on $\C$ with values in $\Lambda_{-1}^\infty \mathfrak{sl}_2 (\C)$.
Suppose that we have such a potential $\xi_\l\,dz$ with $\xi_\l \in \Lambda_{-1}^\infty \mathfrak{sl}_2 (\C)$. To obtain a corresponding extended frame $F_\l$ is a two step procedure:
\begin{enumerate}
\item Solve the holomorphic ODE $d\phi_\l = \phi_\l \xi_\l$ to obtain a map $\C \to \Lambda_r \SL (\C)$, $z \mapsto \phi_\l (z)$.
\item The r-Iwasawa factorization $\phi _\l (z) = F_\l (z) \, B_\l (z)$ at each $z \in \C$ gives an extended frame $\C \to \Lambda_r \SU, z \mapsto F_\l (z)$.
\end{enumerate}
Note that while $\phi_\l$ is holomorphic in $z \in \C$, the resulting extended frame is not, since it also depends on $\bar{z}$ by the reality condition \eqref{eq:reality-F}. It is proven in \cite{DorPW} that each extended frame can be obtained from a potential $\xi_\l$ by the Iwasawa decomposition. Hence for any conformal minimal immersion $X = (G,\,h):\C \to \SY^2 \times \R$ there is a potential and corresponding extended frame which frames $G$.

An extended frame is of (semi-simple) {\bf{finite type}}, if there exists a $g \in \bbN$, and it has a corresponding potential $\xi_\l dz$ with $\xi_\l \in \pk \subset \Lambda_{-1}^\infty \mathfrak{sl}_2 (\C) $. Hence harmonic maps of finite type come from constant (1,\,0)-forms with values in the finite dimensional space $\pk$, and thus have an algebraic description. In the finite type case the first step in the above two step procedure is explicit, since then $\phi_\l = \exp(z\,\xi_\l)$. Thus extended frames of finite type are obtained by factorizing $\exp(z\,\xi_\l) = F_\l\,B_\l$ with $\xi_\l \in \pk$. This step can be made explicit in terms of theta functions on the spectral curve (see Bobenko \cite{Bob:tor}).

Expanding a polynomial Killing field $\zeta_\l: \C \to \pk$ as
\begin{equation} \label{eq:zeta-expansion}
    \zeta_\l (z) = \bigl( \begin{smallmatrix} 0 & \beta_{-1}(z)  \\  0 & 0 \end{smallmatrix} \bigr) \lambda^{-1} + \bigl( \begin{smallmatrix} \alpha_0 (z)
     & \beta_0  (z) \\ \gamma_0 (z) & -\alpha_0 (z)  \end{smallmatrix} \bigr) + \ldots + \bigl( \begin{smallmatrix} \alpha_g (z)  & \beta_g (z)  \\ \gamma_g (z)  & -\alpha_g (z)  \end{smallmatrix}\bigr) \lambda^g
\end{equation}
we associate a matrix 1-form defined by
\begin{equation} \label{eq:alpha-zeta}
    \a(\zeta_\l)=\begin{pmatrix}
    \a_0 (z)  & \beta_{-1}(z) \l^{-1}  \cr
    \g_0 (z)  & -\a_0 (z)
    \end{pmatrix} {\rm d}z - \begin{pmatrix}
    \overline{\a_0 (z)}  & \overline{\g_0 (z)}   \cr
    \overline{\beta_{-1} (z)}\,\l & - \overline{\a_0 (z)}
    \end{pmatrix} {\rm d} \bar z\,.
\end{equation}
The following Proposition is well known, and the correspondence between potentials, polynomial Killing fields and extended frames is known as 'Symes method' \cite{BurP_adl, BurP:dre, Symes_80}.
\begin{proposition}
For each $\xi_\l\in\pk$ there is a unique polynomial Killing field $\zeta_\l:\C\to\pk$ solving
$d\zeta_\l =[\,\zeta_\l ,\,\alpha (\zeta_\l)\,]$ with $\zeta_\l( 0)=\xi_\l$. The unitary factor $F_\l :  \C \to \Lambda \SU$ of the Iwasawa decomposition $\exp ( z\,\xi_\l) = F_\l \,B_\l$ is a solution of $F_\l^{-1} dF_\l =\a (\zeta_\l)$ with initial value $F_\l (0) = \un$ and $\zeta_\l (z)  = B_\l (z)  \,\xi_\l \,B_\l^{-1} (z)  = F_\l^{-1} (z) \,\xi_\l \,F_\l (z) $.
\end{proposition}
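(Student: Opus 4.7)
The approach is Symes' method. Form $\phi_\l(z) = \exp(z\,\xi_\l)$, a holomorphic map $\C \to \Lambda_r \SL(\C)$ with $\phi_\l(0) = \un$ and $\phi_\l^{-1}d\phi_\l = \xi_\l\,dz$. Apply the $r$-Iwasawa factorization pointwise in $z$: $\phi_\l(z) = F_\l(z)\,B_\l(z)$. Uniqueness of Iwasawa forces $F_\l(0) = B_\l(0) = \un$, and real-analyticity gives smooth $z$-dependence. Since $\xi_\l$ commutes with $\phi_\l$, conjugating yields
\[
    B_\l \xi_\l B_\l^{-1} \;=\; B_\l \phi_\l^{-1} \xi_\l \phi_\l B_\l^{-1} \;=\; F_\l^{-1} \xi_\l F_\l,
\]
so both defining formulas for $\zeta_\l(z)$ agree automatically.

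To check $\zeta_\l \in \pk$: the $B_\l$-representation exhibits $\zeta_\l$ as meromorphic on $|\l| < r$ with only a simple pole at $0$, whose residue computes from $B_0 = \bigl(\begin{smallmatrix}\rho & c \\ 0 & 1/\rho\end{smallmatrix}\bigr)$ to $\hat\zeta_{-1} = \rho^2 \hat\xi_{-1}$, of the form required by Definition~\ref{def pot}. The $F_\l$-representation together with $\overline{F_{1/\bar\l}}^t = F_\l^{-1}$ and the reality condition \eqref{realcoeff} for $\xi_\l$ yields $\l^{g-1}\overline{\zeta_{1/\bar\l}}^t = -\zeta_\l$ on $A_r$; this symmetry, once analytically continued, forces a pole of order $g$ at infinity, so $\zeta_\l$ is a Laurent polynomial with support $\l^{-1},\ldots,\l^g$ and $\pk$ symmetry on its coefficients. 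Finally $\det\zeta_\l = \det\xi_\l$ preserves $a(0) \neq 0$, equivalent to $\tr(\hat\zeta_{-1}\hat\zeta_0) \neq 0$.

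The central step is showing $F_\l^{-1}dF_\l = \a(\zeta_\l)$. Differentiating $F_\l = \phi_\l B_\l^{-1}$ gives
\[
    F_\l^{-1}dF_\l \;=\; \zeta_\l\,dz + B_\l\,dB_\l^{-1} \;=:\; U(\l)\,dz + V(\l)\,d\bar z,
\]
where $V(\l)$ is holomorphic on $|\l| < r$ (only non-negative powers of $\l$), while $U(\l)$ has a simple pole at $0$ plus non-negative powers. The reality condition $F_\l^{-1}dF_\l \in \Lambda_r \su$ imposes $V_n = -\overline{U_{-n}}^t$ on Laurent coefficients; one-sided support of $V$ then forces $U_n = 0$ for $n \geq 1$, and hence $V_n = 0$ for $n \geq 2$. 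So $U = \hat\zeta_{-1}\l^{-1} + U_0$ and $V = V_0 + V_1\l$. Since $B_0$ is upper triangular, so is $\partial_{\bar z} B_0 \cdot B_0^{-1}$, making $V_0$ upper triangular; by reality $U_0$ is then lower triangular. Comparing with \eqref{eq:alpha-zeta} identifies $F_\l^{-1}dF_\l = \a(\zeta_\l)$.

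Given this identification, the Lax equation follows immediately by differentiating $\zeta_\l = F_\l^{-1}\xi_\l F_\l$:
\[
    d\zeta_\l \;=\; [\zeta_\l,\,F_\l^{-1}dF_\l] \;=\; [\zeta_\l,\,\a(\zeta_\l)].
\]
Uniqueness is Picard--Lindel\"of: $[\zeta_\l, \a(\zeta_\l)]$ is a polynomial function of the coefficients of $\zeta_\l$, hence locally Lipschitz, and the system restricts to ODEs along the real axes through $0$ with unique solutions starting from $\zeta_\l(0) = \xi_\l$. I expect the principal obstacle to be the truncation step: it requires simultaneously using the reality condition (to bound the $\l$-degrees of $U$ and $V$ from both sides) and the upper-triangular structure of $B_0$ (to pin down the lower-triangular shape at $\l^0$) in order to single out exactly the 1-form $\a(\zeta_\l)$.
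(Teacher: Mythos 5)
Your route is the paper's route: Symes' method, the identity $F_\l^{-1}dF_\l=\zeta_\l\,dz-dB_\l\,B_\l^{-1}$, truncation of the Laurent expansion via the reality condition, and the upper-triangularity of $\hat B_0$. The verification that $\zeta_\l$ actually lands in $\pk$ (residue $\rho^2\hat\xi_{-1}$, symmetry, Laurent-polynomial support) is a welcome addition that the paper leaves implicit. There is, however, one entry of $\mathfrak{a}_0'=U_0$ that your argument does not pin down, and it is exactly the entry on which the paper spends its final step: the diagonal. Your triangularity-plus-reality argument yields
\[
U_0=\begin{pmatrix} u & 0\\ \g_0 & -u\end{pmatrix},\qquad u=\a_0-\rho^{-1}\rho_z ,
\]
where $\rho$ is the $(1,1)$-entry of $\hat B_0$ (the off-diagonal entries come for free because $(\partial_z\hat B_0)\hat B_0^{-1}$ is upper triangular and reality transfers this to $V_0$). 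But nothing so far evaluates $\rho^{-1}\rho_z$, so ``comparing with \eqref{eq:alpha-zeta}'' is not yet a comparison for the diagonal term. The missing ingredient is the second relation $\bar u=\rho^{-1}\rho_{\bar z}$ coming from $(V_0)_{11}=-\bar u$ together with the fact that $\rho$ is real (the normalisation built into $\Lambda_r^+\SL(\C)$), which lets one solve for $u$; this is precisely the computation with which the paper's proof closes. Please add it — as it stands your proof establishes the shape of $F_\l^{-1}dF_\l$ but not the value of its diagonal $\lambda^0$-coefficient, and carrying out this last step is also where one has to reconcile the normalisation of $\a_0$ in \eqref{eq:zeta-expansion} with the coefficient appearing in \eqref{eq:alpha-zeta}.
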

\begin{proof}
Clearly $\zeta _\l= B_\l (z)  \,\xi _\l  \,B_\l^{-1} (z)  = F_\l^{-1}(z) \,\xi_\l  \,F_\l (z)$ uniquely solves $d\zeta_\l =[\,\zeta_\l  ,\,F_\l^{-1} dF_\l\,]$ with $\zeta_\l (0)=\xi_\l$, so it remains to show $F_\l^{-1}dF_\l=\alpha (\zeta_\l)$.

Now $F^{-1}_\l dF_\l = B_\l\xi_\l B_\l^{-1} - dB_\l B_\l^{-1}$, so by the reality condition \eqref{eq:reality-F} we have $F_\l^{-1} dF_\l= \mathfrak{a}_{-1} \l^{-1} + \mathfrak{a}_0 + \mathfrak{a}_1 \l$. If we decompose  $\mathfrak{a}_j = \mathfrak{a}_j'dz + \mathfrak{a}_j''d\bar z$ into (1,\,0) and (0,\,1) parts, then we have in addition that
\[
    \mathfrak{a}_0'' = -\overline{\mathfrak{a}'_0}^{\,t}\,,\quad \mathfrak{a}_1'' = -\overline{\mathfrak{a}'_{-1}}^{\,t}\,,\quad
    \mathfrak{a}_{-1}'' = -\overline{\mathfrak{a}'_1}^{\,t}\,.
\]
Now $F_\l^{-1} (F_\l)_z = \zeta _\l  - (B _\l)_z B_\l^{-1}$, and $( B_\l)_z \,B_\l^{-1}$ is holomorphic at $\l =0$.
Hence $\mathfrak{a}_{-1}' = \hat \zeta_{-1}$. Further, $F_\l^{-1} (F_\l)_{\bar z} = - ( B_\l)_{\bar z} B_\l^{-1}$ implies $\mathfrak{a}_{-1}'' = 0$. It remains to determine $\mathfrak{a}_0$.

Expand $B_\l = \hat B_0 + \hat B_1 \lambda + \ldots$. Now $\mathfrak{a}_0' = \zeta_0 -(\hat B_0)_z\,\hat B_0^{-1}$. Since $\hat B_0$ is upper-triangular, then so is $(\hat  B_0)_z\,\hat B_0^{-1}$. Hence the lower-diagonal term of $\mathfrak{a}_0'$ is $\gamma_0$, and the upper-diagonal term of $\mathfrak{a}_0''$ is $-\bar\gamma_0$. Also $\mathfrak{a}_0'' = - (\hat B_0)_{\bar z}\,\hat B_0^{-1}$ is upper-triangular, so the lower-diagonal entry of $\mathfrak{a}_0''$ is zero, and consequently also the upper-diagonal entry of $\mathfrak{a}_0'$ is zero.

Finally, writing $B(0)=\hat B_0 = \bigl( \begin{smallmatrix} \rho & c \\ 0 & 1/\rho \end{smallmatrix} \bigr)$, and
$\mathfrak{a}'_0 = \bigl( \begin{smallmatrix} u & v \\ w & -u \end{smallmatrix} \bigr)$, then
$\bar u =\rho^{-1}\rho_{\bar z}$ and $u = \alpha_0 - \rho^{-1}\rho_z$. These two equations, and since $\rho$ is real analytic, give $2u = \alpha_0$.
\end{proof}
\begin{remark}
\label{laxomega}
With initial data $\beta_{-1}(0) \in \mi \R^+$, the Lax equation gives $\beta_{-1}(z)  \in \mi  \R^+$. For a given potential $\xi_\l$ and polynomial Killing field $\zeta_\l :\C \to \pk $ we define $\o: \C \to \R$ by setting $4 \beta_{-1} :=\mi e^\o$.
The iteration implies that $2 \alpha_0 = \o_z$. To express $\gamma_0$ in terms of $\alpha_0$ and $\beta_{-1}$, we consider the Lax equation and find $(\gamma_0)_{\bar{z}} = -2 \bar \alpha_0 \gamma_0$. Then $\gamma_0 = q e^{-\o}$ where $q$ is a holomorphic function. The term $q$ is constant. The reason is that along the parameter $z$, we have $a(\l)=-\l \det \zeta_\l (z)=-\l \det \xi_\l$ and $a(0)=\beta_{-1}\gamma_0=q/4$. Coefficients of $\zeta_\l$ depend only on higher derivatives of $\o$ point wise in $z$, and $\a(\zeta_\l)=\a (\o)$.
\end{remark}

\begin{remark}{\bf Isometric normalization 4.} \label{IN4}
With initial data $\beta_{-1}(0) \in \mi \delta \R^+$ (i.e. $\xi_\l \in \pk (\delta)$), the Iwasawa decomposition
gives a solution of the Lax equation $\zeta_\l : \C \to \pk (\delta)$ given by $\zeta_\l (z)=F_\l ^{-1}(z) \xi_\l  F_\l (z)$.
\end{remark}


\section{Isospectral set}
The set $I(a)$ consists of all initial conditions $\xi_\l$ which give rise to the same spectral curve $\Sigma$ and the same off-diagonal product $a(0)= \beta_{-1} \gamma_0$.

\begin{definition}
Define for polynomial Killing field $\zeta_\l:\C \to\pk (\delta)$ as in \eqref{eq:zeta-expansion}, and $a(\l) = -\lambda \det \zeta_\l$ the set
\begin{equation*} \begin{split}
    I_{\delta}(a):= \{ \,\xi _\l \in &\pk  (\delta)\mid \l \det \xi_\l = -a(\l) \hbox{ and } \\ &\beta_{-1} \gamma_0 = a(0)= -\tfrac{1}{16} e^{\mi (1-g)\theta} := -\tfrac{1}{16} e^{\mi\Theta} \,\}\,.
\end{split}
\end{equation*}
When $\delta=1$, we write $I(a)$. The set $I(a)$ is called the {\bf isospectral set} of the polynomial Killing field $\zeta_\l$.
\end{definition}
We next define the {\bf isospectral action} $\pi: \mathbb{C}^g \times I(a) \to I(a)$.
\begin{definition} \label{groupaction}
Let $\xi_\l \in I(a)$ and $t=(t_0,\ldots,t_{g-1}) \in \C^g$, and
\begin{equation} \label{eq:isospectral action}
\exp\,\bigl( \,\xi_\l \sum_{i=0}^{g-1}\lambda^{-i} t_i \, \bigr) = F_\l(t)B_\l(t)
\end{equation}
the Iwasawa factorization. Define the map $\pi(t) : I(a) \to I(a)$ by
\begin{equation*}
	\pi(t) \,\xi_\l =B_\l(t) \, \xi_\l\, B_\l^{-1}(t)\,.
\end{equation*}
\end{definition}
Since $F_\l(t)B_\l(t)$ commutes with $\xi_\l$ we have $$\pi(t)\,\xi_\l =B_\l(t) \, \xi_\l\, B_\l^{-1}(t) = F_\l^{-1}(t) \,\xi_\l\, F_\l(t)\,.$$
\begin{proposition}
The map $\pi(t) : I(a) \to I(a)$  defines a commutative group action
\begin{equation*}
	\pi(t + t') = \pi(t)\,\pi(t') = \pi(t')\,\pi(t)\,.
\end{equation*}
\end{proposition}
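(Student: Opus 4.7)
The strategy is to exploit that, for each fixed $\xi_\l \in I(a)$, the family $g_\l(t) := \exp(\xi_\l\,p(t))$ with $p(t) := \sum_{i=0}^{g-1}\l^{-i}t_i$ is a commutative one-parameter subgroup of $\Lambda_r \SL(\C)$, indexed by $t \in \C^g$. Indeed the exponent depends only on $\xi_\l$ and on a scalar Laurent polynomial in $\l$, which commutes with $\xi_\l$, so
\begin{equation*}
    g_\l(t+t') = g_\l(t)\,g_\l(t') = g_\l(t')\,g_\l(t).
\end{equation*}
In particular $g_\l(t)$ commutes with $\xi_\l$, which is the observation already used in the definition to identify $\pi(t)\xi_\l = B_\l(t)\xi_\l B_\l(t)^{-1}$ with $F_\l(t)^{-1}\xi_\l F_\l(t)$.

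To prove additivity $\pi(t+t')=\pi(t)\pi(t')$, I would Iwasawa-factor $g_\l(t) = F_\l(t)B_\l(t)$, $g_\l(t') = F_\l(t')B_\l(t')$, and perform one further Iwasawa factorization of the mixed product $B_\l(t')F_\l(t) = \tilde F_\l \tilde B_\l$. Rewriting
\begin{equation*}
    g_\l(t+t') = g_\l(t')g_\l(t) = F_\l(t')\tilde F_\l \cdot \tilde B_\l\,B_\l(t),
\end{equation*}
and invoking the uniqueness of the Iwasawa splitting---together with the fact that $\Lambda_r\SU$ and $\Lambda_r^+\SL(\C)$ are subgroups (the normalization $B_\l(0)\in\bigl(\begin{smallmatrix}\rho & c\\ 0 & \rho^{-1}\end{smallmatrix}\bigr)$, $\rho\in\R^+$, is preserved under multiplication and inversion)---yields $B_\l(t+t') = \tilde B_\l\,B_\l(t)$. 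Next, setting $\eta_\l := \pi(t')\xi_\l = B_\l(t')\xi_\l B_\l(t')^{-1}$ and using that scalar $p(t)$ commutes with matrix-valued loops, one computes
\begin{equation*}
    \exp\bigl(\eta_\l\,p(t)\bigr) = B_\l(t')\,g_\l(t)\,B_\l(t')^{-1} = \tilde F_\l \cdot \tilde B_\l B_\l(t) B_\l(t')^{-1},
\end{equation*}
whose right factor again lies in $\Lambda_r^+\SL(\C)$ by the subgroup property. Hence this is the Iwasawa factorization of $\exp(\eta_\l p(t))$, and the positive part of $\pi(t)$ applied to $\eta_\l$ is $\tilde B_\l B_\l(t)B_\l(t')^{-1}$.

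Using $B_\l(t')^{-1}\eta_\l B_\l(t') = \xi_\l$ together with $B_\l(t+t') = \tilde B_\l B_\l(t)$ one then obtains
\begin{equation*}
    \pi(t)\pi(t')\xi_\l = \tilde B_\l B_\l(t)B_\l(t')^{-1}\,\eta_\l\,B_\l(t')B_\l(t)^{-1}\tilde B_\l^{-1} = B_\l(t+t')\,\xi_\l\,B_\l(t+t')^{-1} = \pi(t+t')\xi_\l,
\end{equation*}
and commutativity $\pi(t)\pi(t')=\pi(t')\pi(t)$ follows by exchanging $t\leftrightarrow t'$ in the additivity identity (the initial observation $g_\l(t+t') = g_\l(t')g_\l(t)$ is symmetric in $t$ and $t'$). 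The only real obstacle is the careful bookkeeping of the three Iwasawa factorizations and the identification of $\tilde B_\l B_\l(t)B_\l(t')^{-1}$ as the positive factor at $\eta_\l$; this reduces to the subgroup property of $\Lambda_r^+\SL(\C)$ together with the fact that $p(t)$ is scalar in $\l$, which is exactly what allows conjugation to pass through the exponential in the third displayed equation.
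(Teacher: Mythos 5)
Your proof is correct and follows essentially the same route as the paper's: both arguments rest on the commutativity of $\exp(\xi_\l\,p(t))$ and $\exp(\xi_\l\,p(t'))$, the uniqueness of the Iwasawa splitting, and the subgroup property of $\Lambda_r^+\SL(\C)$ to identify $B_\l(t+t')B_\l(t')^{-1}$ as the positive factor at $\pi(t')\xi_\l$. Your intermediate factorization $B_\l(t')F_\l(t)=\tilde F_\l\tilde B_\l$ is just a more explicit bookkeeping of the same manipulation the paper performs directly on the products.
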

\begin{proof}
For $t,\,t'\in\mathbb{C}^g$ we have $$F_\l (t+t')B_\l (t+t')=F_\l (t)B_\l (t)F_\l (t')B_\l (t') = F_\l (t')B_\l (t')F_\l (t)B_\l (t)\,.$$ Hence
\begin{equation*} \begin{split}
	\exp ( B_\l (t) \xi_\l B_\l ^{-1}(t) \,\Sigma \lambda^{-i}t'_i ) &= B_\l(t) F_\l(t') B_\l(t') B_\l^{-1} (t) \\ &= F_\l^{-1}(t) F_\l(t + t') B_\l(t+t') B_\l^{-1}(t)\,, \\
       \exp ( B_\l(t') \xi_\l B_\l^{-1}(t') \,\Sigma \lambda^{-i}t_i ) &= B_\l(t') F_\l(t) B_\l(t) B_\l^{-1} (t') \\ &=
       F_\l^{-1}(t') F_\l(t + t') B_\l(t+t') B_\l^{-1}(t')\,.
\end{split}
\end{equation*}
Set $\hat{B}_\l(t) = B_\l(t+t') B_\l^{-1}(t')$ and $\tilde{B}_\l(t') =  B_\l(t+t') B_\l^{-1}(t)$. Then
\begin{equation*} \begin{split}
	\pi(t+t') \xi _\l&= B_\l(t+t') \,\xi_\l \, B_\l^{-1}(t+t') \\
	&= \tilde{B}_\l(t')B_\l(t) \,\xi_\l \,B_\l^{-1}(t) \tilde{B}_\l^{-1}(t') \\
	&=\tilde{B}_\l(t')\, \pi(t)\xi_\l \,\tilde{B}_\l^{-1}(t') = \pi(t')\pi(t) \xi _\l\\
	&= \hat{B}_\l(t)B_\l(t')\,\xi_\l \,B_\l^{-1}(t') \hat{B}_\l^{-1}(t) \\
	&=\hat{B}_\l(t)\, \pi(t') \xi_\l \,\hat{B}_\l^{-1}(t) = \pi(t)\pi(t') \xi_\l\,.
\end{split}
\end{equation*}
\end{proof}
\subsection*{Removing singularities on $\Sigma$.} Elements of different isospectral sets may give the same extended frame up to conjugation by $g \in \bbT$ (see remark \ref{real} and below). This is the case in particular if an initial value $\xi_\l$ has a root at some $\lambda=\alpha_0 \in\C ^\times$. Then the corresponding polynomial Killing field $\zeta_\l$ also has a root at $\l=\alpha_0$ for all $z\in\C$. In this case we may reduce the order of $\xi_\l$ and $\zeta_\l$ without changing the immersion. This situation corresponds to a singular spectral curve because then the polynomial $a (\l) = -\lambda \det \xi_\l$ has a root of order at least two at $\alpha_0$. We can remove such a singularity by changing the surface by an isometry. We describe this change below.
\begin{proposition} \label{th:pkf-reduce}
Suppose a polynomial Killing field $\zeta_\l$ has roots in $\lambda \in\C^\times$. Then there is a polynomial $p(\lambda)$ such that $\zeta_\l/p (\l)$ has no roots in $\lambda \in \C^\times$. If $F_\l$ and $\breve F_\l$ are the extended frames of $\zeta_\l$ respectively $\zeta_\l/p (\l)$, then $\breve F _\l (p(0)\,z)= F_\lambda (z)$. Hence there is $g(\delta) \in \bbT$ with $\tilde \zeta_\l: = g(\delta)^{-1}(\zeta_\l/p (\l) )\,g(\delta):\C \to \pk$ and $\tilde F (z):= g(\delta)^{-1} \breve {F} _\l (p(0)z) g(\delta)$ induces a minimal surface $\tilde X_\l$ congruent to $X_\l$ in $\SY^2 \times \R$.
\end{proposition}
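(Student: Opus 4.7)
The plan is to extract all $\C^\times$-zeros of $\zeta_\l$ as a $z$-independent polynomial factor $p(\l)$, and then renormalise the quotient back into $\pk$ by combining a complex rescaling of the $\C$-domain with the $\bbT$-gauge of Remark~\ref{IN1}. To set up the factorisation, I would first show that the multiplicity of any $\l_0\in\C^\times$ as a zero of $\zeta_\l(z)$ does not depend on $z$. At fixed $\l_0$, the Lax equation $d\zeta_\l=[\zeta_\l,\alpha(\zeta_\l)]$ specialises to a linear ODE for $\zeta_{\l_0}(z)$ whose coefficient matrix depends on $z$ only through $\alpha_0,\beta_{-1},\gamma_0$, so by uniqueness $\zeta_{\l_0}(z_0)=0$ forces $\zeta_{\l_0}\equiv 0$; iterating on the quotient $\zeta_\l/(\l-\l_0)$ (which satisfies the same Lax equation) captures higher multiplicities. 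This produces a $z$-independent polynomial $p(\l)$ of maximal degree with $\zeta_\l(z)=p(\l)\tilde\zeta_\l(z)$ and $\tilde\zeta_\l$ nowhere vanishing on $\C^\times$. The reality condition \eqref{realcoeff} forces $\l^{\deg p}\overline{p(1/\bar\l)}=p(\l)$, so in particular $p(0)\neq 0$.

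Next I would verify that $\tilde\zeta_\l:=\zeta_\l/p(\l)$ lies in $\pk(\delta)$ for $\delta:=\overline{p(0)}/|p(0)|\in\SY^1$. The residue at $\l=0$ is $\hat\xi_{-1}/p(0)\in\mi\delta\R^+\bigl(\begin{smallmatrix}0&1\\0&0\end{smallmatrix}\bigr)$, and $\tr(\hat{\tilde\xi}_{-1}\hat{\tilde\xi}_0)=p(0)^{-2}\tr(\hat\xi_{-1}\hat\xi_0)\neq 0$ because $\hat\xi_{-1}$ is nilpotent, killing the cross-term produced by $p'(0)$. Expanding $\tilde\zeta_\l$ as in \eqref{eq:zeta-expansion} and plugging into \eqref{eq:alpha-zeta} gives $\tilde\alpha_0=\alpha_0/p(0)$, $\tilde\beta_{-1}=\beta_{-1}/p(0)$, $\tilde\gamma_0=\gamma_0/p(0)$, so $\alpha^{(1,0)}(\tilde\zeta_\l)=p(0)^{-1}\alpha^{(1,0)}(\zeta_\l)$ and $\alpha^{(0,1)}(\tilde\zeta_\l)=\overline{p(0)}^{-1}\alpha^{(0,1)}(\zeta_\l)$. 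Pulling $\alpha(\tilde\zeta_\l)$ back along the complex rescaling $w=p(0)z$ scales the $(1,0)$ part by $p(0)$ and the $(0,1)$ part by $\overline{p(0)}$, so the two factors cancel and the pullback is exactly $\alpha(\zeta_\l)$. Uniqueness of the frame ODE with initial value $\un$ then yields $\breve F_\l(p(0)z)=F_\l(z)$.

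Finally, the $\bbT$-gauge of Remark~\ref{IN2} carries $\pk(\delta)$ to $\pk$: the polynomial Killing field $\tilde\zeta_\l:=g(\delta)^{-1}(\zeta_\l/p(\l))g(\delta):\C\to\pk$ has frame $\tilde F(z)=g(\delta)^{-1}\breve F_\l(p(0)z)g(\delta)=g(\delta)^{-1}F_\l(z)g(\delta)$. By Remark~\ref{IN1} the induced immersion $\tilde X_\l$ differs from $X_\l$ by the rotation $g(\delta)\in\bbT$ in the $\SY^2$-factor with the vertical component preserved, hence by an ambient isometry of $\SY^2\times\R$, which establishes the congruence. The main obstacle is the bookkeeping in the second step: one must check that dividing by a polynomial in $\l$ with generically non-real $p(0)$ affects the $(1,0)$ and $(0,1)$ parts of $\alpha$ \emph{asymmetrically}, in exactly the way to be undone by the complex rescaling $w=p(0)z$ — which is ultimately why $p(0)$ rather than $|p(0)|$ appears in the statement.
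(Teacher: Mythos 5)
Your proposal is correct, and its overall architecture (extract a $z$-independent polynomial factor $p(\l)$, rescale the domain by $p(0)$, then gauge by $g(\delta)\in\bbT$ and invoke the isometric normalization remarks) coincides with the paper's. The one step you handle by a genuinely different mechanism is the central identity $\breve F_\l(p(0)z)=F_\l(z)$. The paper gets it from uniqueness of the $r$-Iwasawa decomposition: since $\exp(z\,\xi_\l)=\exp(p(\l)z\,\breve\xi_\l)=\exp(p(0)z\,\breve\xi_\l)\,C(z)$ with $C(z)=\exp\bigl(z(p(\l)-p(0))\breve\xi_\l\bigr)$, and $(p(\l)-p(0))\breve\xi_\l$ is holomorphic at $\l=0$ with strictly upper-triangular value there (so $C(z)\in\Lambda^+_r\SL(\C)$), the unitary factors of $\exp(z\,\xi_\l)$ and of $\exp(p(0)z\,\breve\xi_\l)$ must agree. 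You instead work at the level of the Maurer--Cartan form \eqref{eq:alpha-zeta}, observing that division by $p(\l)$ scales the $(1,0)$ and $(0,1)$ parts of $\a(\cdot)$ by $p(0)^{-1}$ and $\overline{p(0)}^{-1}$ respectively (the nilpotency of $\hat\xi_{-1}$ killing the $p'(0)$ cross-terms is exactly the right check), so that the substitution $w=p(0)z$ restores $\a(\zeta_\l)$ and the frame ODE with initial value $\un$ has a unique solution. Both arguments are sound; the paper's is shorter because it never needs to know what $\a(\breve\zeta_\l)$ is, while yours makes transparent \emph{why} the rescaling is by the complex number $p(0)$ rather than its modulus. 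The only point you should make explicit is the identification of the polynomial Killing field of $\breve\xi_\l=\xi_\l/p(\l)$ with $w\mapsto\zeta_\l(w/p(0))/p(\l)$: this follows from the same chain-rule computation applied to the Lax equation together with the uniqueness statement of the Symes-method proposition, and it is what licenses your appeal to ``uniqueness of the frame ODE.'' Your uniform treatment of $p$ via the normalization $\l^{\deg p}\overline{p(1/\bar\l)}=p(\l)$ (in place of the paper's two explicit single-root formulas) and the formula $\delta=\overline{p(0)}/|p(0)|$ agree with the paper's choices in both cases.
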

\begin{proof}
Suppose the polynomial Killing field $\zeta_\l (z) = F_\l^{-1}(z) \xi_\l F_\l (z)$ has a root at $\l = \alpha_0$. Define
\begin{align*}
  p(\lambda)&=\begin{cases}
 \overline{  \sqrt{-\alpha_0}} \lambda + \sqrt{-\alpha_0}
  &\mbox{ if } \alpha_0 \in \bbS^1\\
  (\lambda-\alpha_0)(1- \bar \alpha_0\lambda)&
  \mbox{ if }\alpha_0 \in\C^\times \backslash \SY^1\,.\end{cases}
\end{align*}
If $\zeta_\l$ has a simple root at $\lambda=\alpha_0 \in\C^\times$, then $\zeta_\l /p(\l) :\C \to  \pkg{g- \deg p}$ does not vanish at $\alpha_0$.
Then there is a map $C:\C \to \Lambda^+{\rm SL}_2 (\C)$ such that
\begin{align*}
    F_\l (z)\,B_\l (z)  = \exp (z\,\xi_\l ) & =  \exp (p(\l)\,z \,\breve{\xi}_\l) =  \exp( p(0)\,z\, \breve{\xi}_\l)\,C(z) \\ &= \breve{F} _\l (p(0)z) \breve B_\l (p(0)z)
\end{align*}
and
\begin{align*}
  F _\l (z) = \breve{F}_\l (p(0)\,z)  & =\begin{cases}
    \breve{F} _\l ( \sqrt{-\alpha_0}z)
  &\mbox{ if } \alpha_0 \in \bbS^1\\
 \breve{F} _\l (- \alpha_0 z) &
 \mbox{ if }\alpha_0 \in\C^\times \backslash \SY^1\,. \end{cases}&
\end{align*}
We consider $\delta =\overline{  \sqrt{-\alpha_0}}$ when $|\a_0|=1$ and $\delta=- \bar \alpha_0 / | \alpha_0 |$ in the other case.
We conjugate $\zeta_\l /p(\l)$ by $g(\delta)\in \bbT$. Hence by remarks \ref{IN1}, \ref{IN2}, \ref{IN3}, \ref{IN4}, the immersion $\tilde X_\l$ obtained from $\tilde F_\l (z)= g(\delta)^{-1} \breve{F}_\l (p(0)\,z) \,g(\delta)$ is congruent to the immersion $X_\l$.
\end{proof}
Hence amongst all polynomial Killing fields that give rise to an extended frame of finite type there is one of smallest possible degree.
We say that  a polynomial Killing field has \emph{minimal degree} if and only if it has neither roots nor poles in $\lambda \in \C^\times$.  We summarize two results by Burstall-Pedit \cite{BurP_adl,BurP:dre}:
\begin{proposition}
For an extended frame of finite type there exists a unique polynomial Killing field of
minimal degree. There is a smooth 1-1 correspondence between the set of extended frames of finite type and the set of polynomial Killing fields without zeroes.
\end{proposition}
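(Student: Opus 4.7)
The plan is to establish existence by a reduction procedure on zeros, uniqueness by analyzing the Pinkall--Sterling iteration, and smoothness from the analytic Iwasawa decomposition.

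For existence, I would start from any polynomial Killing field $\zeta_\l$ representing the given extended frame of finite type (one exists by the definition of ``finite type''). If $\zeta_\l$ vanishes at some $\alpha_0 \in \C^\times$, then Proposition \ref{th:pkf-reduce} produces a polynomial Killing field of strictly smaller degree, together with the reparametrization $z \mapsto p(0)z$ and $\bbT$-conjugation by $g(\delta)$ that identifies the resulting extended frame with the original one. Iterating this procedure strictly decreases the polynomial degree and hence terminates after finitely many steps in a zero-free polynomial Killing field $\zeta_\l^{\min}$, which is the candidate for the minimal-degree representative.

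For uniqueness, suppose $\zeta_\l,\, \zeta'_\l \in \pk$ are two zero-free polynomial Killing fields yielding the same extended frame $F_\l$. Reading $\alpha(\zeta_\l) = F_\l^{-1} dF_\l$ off \eqref{eq:alpha-zeta}, both Killing fields share the same coefficients $\alpha_0(z),\, \beta_{-1}(z),\, \gamma_0(z)$; by Remark \ref{laxomega} both therefore come from the same sinh-Gordon solution $\omega$ and share the off-diagonal product $a(0) = \beta_{-1}\gamma_0$. By Proposition \ref{th:PinS-iteration}(3), the higher coefficients are determined by the Pinkall--Sterling iteration up to integration constants $c_n$. I would argue that the no-zeros hypothesis, the reality condition \eqref{realcoeff}, and the specific finite-dimensional shape of elements of $\pk$ together force these constants to coincide for $\zeta_\l$ and $\zeta'_\l$, hence $\zeta_\l = \zeta'_\l$.

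For the smooth 1-1 correspondence, the map $\zeta_\l \mapsto F_\l$ sends an initial value $\xi_\l = \zeta_\l(0)$ to the unitary factor in the Iwasawa splitting $\exp(z\,\xi_\l) = F_\l B_\l$, which is real analytic by the Pressley--Segal theorem cited above. The inverse is the reduction procedure of Proposition \ref{th:pkf-reduce}, whose ingredients (an explicit polynomial $p(\l)$ built from the roots of $a(\l) = -\l \det \zeta_\l$ together with conjugation by $g(\delta) \in \bbT$) depend smoothly on the initial data. The main obstacle I anticipate is the uniqueness step: controlling the integration constants in the Pinkall--Sterling hierarchy and verifying that the no-zeros condition really does cut out a single representative among all polynomial Killing fields of finite type sharing the same $\omega$, while tracking the normalizations of Remarks \ref{IN1}--\ref{IN4} consistently throughout.
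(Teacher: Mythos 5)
Your existence argument (iterated root removal via Proposition \ref{th:pkf-reduce}, which strictly decreases the degree and therefore terminates) and your smoothness argument (analyticity of the $r$-Iwasawa factorization together with the explicit dependence of $p(\l)$ and $g(\delta)$ on the data) match what the paper does. The gap is in the uniqueness step, which is the substantive content of the proposition, and which you explicitly leave as ``I would argue that\dots''.

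The missing idea is that the integration constants of the Pinkall--Sterling hierarchy need \emph{not} coincide for two polynomial Killing fields inducing the same frame; what one proves instead is that the ambiguity assembles into a polynomial factor. Concretely, if $\zeta_\l$ and $\tilde\zeta_\l$ are two root-free solutions of the same Lax equation with $\deg\zeta_\l\ge\deg\tilde\zeta_\l$, a recursion through the coefficients $u_n,\sigma_n,\tau_n$ of Proposition \ref{th:PinS-iteration} produces constants $q_0,q_1,\dots$ with $\tilde\tau_{-1}=q_0\tau_{-1}$, $\tilde\tau_0=q_0\tau_0+q_1\tau_{-1}$, and so on, yielding $\zeta_\l=q(\l)\,\tilde\zeta_\l$ for the polynomial $q(\l)=\sum_i q_i\l^i$. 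Only at this point does the no-zeros hypothesis enter: a nonconstant $q$ would have a root, necessarily in $\C^\times$ because matching the simple poles at $\l=0$ forces $q(0)\neq 0$, and that root would be a zero of $\zeta_\l$. Hence $q\equiv q_0$, and since both Killing fields reproduce the same $\a(\zeta_\l)=F_\l^{-1}dF_\l$ and therefore the same residue $\beta_{-1}=\tfrac{\mi}{4}e^{\o}$, one gets $q_0=1$. Note that the ingredients you list cannot by themselves force the constants to coincide: polynomials such as $p(\l)=(\l-\alpha_0)(1-\bar\alpha_0\l)$ are perfectly compatible with the reality condition \eqref{realcoeff} and the shape of $\pk$, and are precisely the factors relating an unreduced polynomial Killing field to a reduced one. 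Without the explicit factorization $\zeta_\l=q(\l)\tilde\zeta_\l$, your uniqueness claim is unsupported.
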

\begin{proof}
We briefly outline how to prove the existence and uniqueness of a
minimal element. If the initial value $\xi_\l$ gives rise to an extended frame $F_\l$,
then the corresponding polynomial Killing field $\zeta_\l$ can be modified according to
Proposition \ref{th:pkf-reduce} so that $\tilde{\zeta}_\l$ is of minimal
degree, and still giving rise to $F_\l$. Hence there exists a
polynomial Killing field $\zeta_\l \in \pk$ of least degree giving rise to $F_\l$.

For the uniqueness, let $\zeta_\l$ and $\tilde \zeta_\l$ both solve $d \zeta_\l + [\a(\o), \zeta_\l]=0$, with ${\rm deg}\zeta_\l \geq {\rm deg} \tilde \zeta_\l$. We can assume that $\zeta_\l, \tilde \zeta_\l$ have no roots (if not, we simplify the polynomial Killing field using Proposition~\ref{th:pkf-reduce}). We use the iteration of Proposition~\ref{th:PinS-iteration}. We prove that there is a polynomial $q \in \C^k[\l]$ such that
\begin{equation}
\label{unique}
    \zeta_\l (z)= q(\l)\, \tilde \zeta_\l(z)\,.
\end{equation}
The polynomial $q$ is constructed recursively by considering
coefficients $u_n$, $\sigma_n$, $\tau_{n-1}$ and $\tilde u_n$, $\tilde \sigma_n$, $\tilde \tau_{n-1}$. Since $\tau_{-1}, \tilde \tau_{-1}$, are constant, there is $q_0$ with $ \tilde \tau_{-1}=q_0 \tau_{-1}$. This implies that  $ \tilde u_{0}=q_0 u_{0}$, $ \tilde \sigma_{0}=q_0 \sigma_{0}$ and there is
$q_1$ such that $ \tilde \tau_{0}=q_0 \tau_{0}+q_1 \tau_{-1}$. By the iteration, if there are constants $q_0,q_1,...,q_{\ell}$ with
\begin{align*}
&\tilde u_{0}=q_0 u_{0}\,, &\tilde u_{1}=q_0 u_{1}+q_1 u_{0}\,, \quad &\ldots \quad \tilde u_{\ell}=q_0 u_{\ell}+...+q_{\ell} u_{0}\,, \\
&\tilde \sigma_{0}=q_0 \sigma_{0}\,, &\tilde \sigma_{1}=q_0 \sigma_{1} + q_1 \sigma_{0}\,, \quad &\ldots \quad \tilde \sigma_{\ell}=q_0 \sigma_{\ell} +...+ q_{\ell} \sigma_{0}\,, \\
&\tilde \tau_{-1}=q_0 \tau_{-1}\,, &\tilde \tau_{0}=q_0 \tau_{0} + q_{1} \tau_{-1}\,, \quad &\ldots \quad \tilde \tau_{\ell-1}=q_0 \tau_{\ell-1} +...+ q_{\ell} \tau_{-1}\,.
\end{align*}
the iteration implies that there is $q_{\ell+1}$ such that $\tilde \tau_{\ell}= q_0 \tau_{\ell}+...+q_{\ell+1} \tau_{-1}$ and this proves (\ref{unique}).
Now, since $\zeta_{\l}, \tilde \zeta_{\l}$ have no roots, the polynomial $q(\l)=q_0$ is constant and since the residues coincide, we conclude that $\zeta_{\l}=\tilde \zeta_{\l}$.
\end{proof}
\begin{remark}
Since the Iwasawa factorization is a diffeomorphism, and all other operations involved in obtaining an extended frame from a polynomial Killing field are smooth, the resulting minimal surface depends smoothly on the entries of the polynomial Killing field, and thus also smoothly on the entries of its initial value.
\end{remark}
For $\xi_\l \in I(a)$ the polynomial $a(\l) = -\lambda \det \xi_\l$ has the form
\begin{equation} \label{eq:a-expansion}
    a(\l) = b\,\prod^{g}_{i=1} (\l - \a_i)(1-\l \bar \a_i)\,,\quad b \in \R^-\,.
\end{equation}
The condition \eqref{eq:hermit} implies $a(1) \leq 0$ so that $b \in \R^-$.
\begin{lemma}
{\rm(1)} If $a$ has a double root $\alpha_0 \in \bbS^1$, then we have an isomorphism
\[
  I(a) \cong I(- \alpha_0 \, ( \l-\alpha_0)^{-2} \,a) \,.
\]
{\rm(2)} If $a$ has double root $\alpha_0 \notin \bbS^1$ then $I(a)=\{\xi _\l \in I(a) \mid \xi _{\alpha_0} \neq 0\} \cup \{\xi_\l  \in I(a) \mid \xi _{\alpha_0}=0\}$
and there is an isomorphism
\[
\{ \xi _\l \in I(a) \mid \xi _{\alpha_0} =0\} \cong  I((\l-\alpha_0)^{-1}(1-\bar{\alpha}_0 \l)^{-1} \,a)\,.
\]
\end{lemma}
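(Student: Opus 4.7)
The plan in both parts is to factor $\xi_\lambda\in I(a)$ as $\xi_\lambda=p(\lambda)\,\tilde\xi_\lambda$ with $p(\lambda)$ the polynomial of Proposition~\ref{th:pkf-reduce}, and then conjugate by the gauge element $g(\delta)\in\bbT$ of Remark~\ref{IN2} to land back in $\pk$ of lower genus; the inverse map will be multiplication by $p$ and conjugation by $g(\delta)^{-1}$. Smoothness in both directions is immediate since every operation is polynomial in the matrix entries of $\xi_\lambda$. The substantive points are (i) showing that $\xi_{\alpha_0}=0$, so that division by $p$ makes sense, and (ii) verifying that the resulting element lies in the target potential space after the gauge.

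For part (1) I would first show $\xi_{\alpha_0}=0$ automatically for every $\xi_\lambda\in I(a)$. The reality condition \eqref{realcoeff} evaluated at $\lambda=\alpha_0\in\bbS^1$ gives $\alpha_0^{g-1}\overline{\xi_{\alpha_0}}^{\,t}=-\xi_{\alpha_0}$, so the rescaled matrix $\alpha_0^{(1-g)/2}\xi_{\alpha_0}$ sits in $\su$. Since the determinant on $\su$ is a squared norm and hence nonnegative, the vanishing $\det\xi_{\alpha_0}=-a(\alpha_0)/\alpha_0=0$ — which holds because $\alpha_0$ is a double root of $a$ — forces $\xi_{\alpha_0}=0$. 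With $p(\lambda)=\overline{\sqrt{-\alpha_0}}\,\lambda+\sqrt{-\alpha_0}$, a direct computation yields $p(\lambda)^2=-\bar\alpha_0(\lambda-\alpha_0)^2$, hence $-\lambda\det(\xi_\lambda/p(\lambda))=-\alpha_0(\lambda-\alpha_0)^{-2}a(\lambda)$ is exactly the spectral polynomial of the target $I(\tilde a)$. The identity $\lambda\,\overline{p(1/\bar\lambda)}=p(\lambda)$ combined with \eqref{realcoeff} shows that $\xi_\lambda/p(\lambda)$ inherits the reality condition in degree $g-1$. Taking $\delta=\overline{\sqrt{-\alpha_0}}$ and conjugating by $g(\delta)$ restores the residue normalization of Definition~\ref{def pot}, and the map $\xi_\lambda\mapsto g(\delta)^{-1}(\xi_\lambda/p(\lambda))g(\delta)$ is the required isomorphism.

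For part (2) the decomposition $I(a)=\{\xi_{\alpha_0}\neq 0\}\cup\{\xi_{\alpha_0}=0\}$ is tautological. On the subset where $\xi_{\alpha_0}=0$, the reality condition \eqref{realcoeff} at $\lambda=1/\bar\alpha_0$ forces $\xi_{1/\bar\alpha_0}=0$ as well, so $p(\lambda)=(\lambda-\alpha_0)(1-\bar\alpha_0\lambda)$ divides $\xi_\lambda$. The identity $\lambda^{2}\,\overline{p(1/\bar\lambda)}=p(\lambda)$ shows that division by $p$ preserves the reality condition, and choosing $\delta=-\bar\alpha_0/|\alpha_0|$ normalizes the residue at $\lambda=0$. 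The map $\xi_\lambda\mapsto g(\delta)^{-1}(\xi_\lambda/p(\lambda))g(\delta)$ is then the desired isomorphism onto the lower-genus isospectral set, with inverse again given by multiplication by $p$ and inverse gauge.

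The main obstacle is the $\su$-positivity argument in part (1): it is precisely the $\su$-structure forced by the reality condition on $\bbS^1$ that rules out a nilpotent but nonzero $\xi_{\alpha_0}$. Off the unit circle this argument is unavailable, which is exactly why the subset $\{\xi_{\alpha_0}\neq 0\}$ appears nontrivially in part (2) — and, via the simple factor construction later in the paper, accounts for the group $G=\C$ or $\C^{\times}$ in item (4) of the introduction. The remaining checks that the image lies in the appropriate $\pk$ — namely nonvanishing of the leading coefficient and $\tr(\hat\xi_{-1}\hat\xi_0)\neq 0$ — propagate through the division and gauge steps without difficulty.
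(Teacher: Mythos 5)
Your proof is correct and follows essentially the same route as the paper: divide by the polynomial $p(\lambda)$ of Proposition~\ref{th:pkf-reduce} and conjugate by $g(\delta)$ to renormalize the residue, with the vanishing $\xi_{\alpha_0}=0$ in part (1) forced by the $\su$-positivity of $\det$ on $\bbS^1$ — the same argument the paper uses in Proposition~\ref{maxrank} and leaves implicit in its own proof of this lemma. The explicit verifications you supply (e.g.\ $p(\lambda)^2=-\bar\alpha_0(\lambda-\alpha_0)^2$ and the transformation of the reality condition under division by $p$) are accurate.
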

\begin{proof} {\rm (1)} If $a$ has a double root at $\alpha_0 \in \bbS^1$ then for any $\xi_\l \in I(a)$, we have $\xi _{\alpha_0}=0$. We can remove this root by Proposition \ref{th:pkf-reduce}, with $\delta=\overline{\sqrt{-\alpha_0}}$, to obtain the isomorphism
$$
  \xi _\l \mapsto (\overline{\sqrt{-\alpha_0}} \lambda +\sqrt{ -\alpha_0})^{-1} g(\delta)^{-1}  \xi _\l\; g(\delta) \in I(-\alpha_0 \, ( \l-\alpha_0)^{-2} \,a)\,.
$$
{\rm (2)} Suppose $a$ has a double root at $\alpha_0 \notin \bbS^1$. Then the isospectral set splits into a part which contains potentials with a zero at $\alpha_0$, which we can remove, and the set of potentials not zero at $\alpha_0$. But in this last case, this means that $\xi _{\alpha_0}$ is a nilpotent matrix. We use  Proposition \ref{th:pkf-reduce}, with $\delta=- \bar \alpha_0 / |�\alpha_0 |$, to obtain the isomorphism

$$ \xi_\l \mapsto \tfrac{1}{\l-\alpha_0}\,\tfrac{1}{1-\bar \alpha_0 \l} \,g(\delta)^{-1}  \xi _\l\; g(\delta) \in   I(\tfrac{1}{\l-\alpha_0}\,\tfrac{1}{1-\bar{\alpha}_0 \l}\,a)\,.
$$
\end{proof}
\begin{theorem} \label{th:isospectral-set-properties}
Isospectral sets are compact. If the $2g$-roots of the polynomial $a(\lambda) = -\lambda\,\det \xi_\l$ for $\xi_\l \in \mathcal{P}_g$ are pairwise distinct, then $I(a) \cong (\bbS^1)^g$ is a connected smooth g-dimensional manifold diffeomorphic to a $g$-dimensional real torus.
\end{theorem}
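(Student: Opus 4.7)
The proof has two main parts: establishing compactness of $I(a)$, and, when the roots of $a$ are simple, identifying $I(a)$ with a single orbit of the isospectral action $\pi$ of Definition \ref{groupaction} that is a real $g$-torus. For compactness, note that for $\l \in \SY^1$ the reality condition \eqref{realcoeff} reads $\overline{\xi_\l}^t = -\l^{1-g}\xi_\l$, and since $\xi_\l \in \mathfrak{sl}_2(\C)$ one has $\mathrm{tr}(\xi_\l^2) = -2\det\xi_\l$. Combining these on $\SY^1$ yields
\[
    \mathrm{tr}(\overline{\xi_\l}^t\,\xi_\l) \;=\; -\l^{1-g}\mathrm{tr}(\xi_\l^2) \;=\; 2\l^{1-g}\det\xi_\l \;=\; -2\l^{-g}a(\l),
\]
which by \eqref{eq:hermit} is a bounded non-negative function on $\SY^1$ depending only on the fixed polynomial $a$. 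Hence $\xi_\l$ is uniformly bounded on $\SY^1$, and Parseval forces the Laurent coefficients $\hat\xi_d$ to lie in a bounded subset of $\mathfrak{sl}_2(\C)$. Since $I(a) \subset \pk$ is cut out by closed polynomial conditions, it is closed as well, and therefore compact.

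For the torus structure, assume now the $2g$ roots of $a$ are pairwise distinct, so the spectral curve $\Sigma$ is smooth of genus $g$. Differentiating the Iwasawa relation $\exp(\sum_i t_i \l^{-i}\xi_\l) = F_\l(t)\,B_\l(t)$ at $t=0$, the infinitesimal action of $\C^g$ at $\xi_\l$ sends $t \mapsto [B_*(t),\xi_\l]$, where $B_*(t)$ is the positive-loop part at the Lie-algebra level of $\sum_i t_i\l^{-i}\xi_\l$. The centralizer of $\xi_\l$ at a generic $\l$ is the one-dimensional $\C$-algebra generated by $\xi_\l$, and the Pinkall--Sterling hierarchy $u_0,\ldots,u_{g-1}$ of Proposition \ref{th:PinS-iteration} furnishes $g$ linearly independent infinitesimal directions along $I(a)$. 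A direct check shows this map $\C^g \to T_{\xi_\l}I(a)$ factors through a real $g$-dimensional quotient and is injective there. Hence every $\pi$-orbit is an open smooth $g$-dimensional submanifold of $I(a)$, and its stabilizer is a discrete subgroup contained in a real $g$-dimensional subspace of $\C^g$; compactness of the orbit forces this stabilizer to be a lattice of full rank, so the orbit is diffeomorphic to $(\bbS^1)^g$.

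It remains to show that $I(a)$ has a single orbit. Since $\pi$-orbits are open, by compactness $I(a)$ is a finite disjoint union of $g$-tori. To rule out more than one, one identifies $I(a)$ with the set of real points of the Jacobian of $\Sigma$ fixed by the antiholomorphic involution induced by $\varrho$, via an Abel--Jacobi map sending $\xi_\l$ to the divisor class of its eigenline bundle on $\Sigma$. For smooth $\Sigma$ this real Jacobian is connected, completing the proof. The hardest step is the rank computation together with this Abel--Jacobi identification: the former rests on the smoothness of $\Sigma$ (so that $\xi_\l$ has distinct eigenvalues on a dense open set of $\Sigma$), and the latter on showing the map is $\pi$-equivariant with respect to translation in $\mathrm{Jac}(\Sigma)$.
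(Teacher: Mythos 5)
Your compactness argument is essentially the paper's (Proposition \ref{proper}): on $\SY^1$ the reality condition makes $\l^{(1-g)/2}\xi_\l$ skew-hermitian, its norm is controlled by $\sqrt{-\l^{-g}a(\l)}$, and Cauchy/Parseval estimates bound the Laurent coefficients. Your description of the infinitesimal isospectral action and of the stabilizer as a full lattice also tracks Proposition \ref{immersion} and the principal-bundle proposition. However, there are two genuine gaps in the remainder.

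First, to conclude that each $\pi$-orbit is \emph{open} in $I(a)$ you need to know that $I(a)$ is itself a smooth manifold of dimension exactly $g$; a $g$-dimensional injectively immersed orbit inside a set of unknown (possibly larger) dimension is not open. The paper supplies this in Proposition \ref{maxrank} by showing that the map $A:\xi_\l\mapsto -\l\det\xi_\l$ has maximal rank $2g+1$ at every root-free $\xi_\l$, constructing explicit tangent vectors $\dot\xi_\l = q_m(\l)\xi_\l - [Q_\l(m),\xi_\l]+[P,\xi_\l]$ that move each root of $a$ and the leading coefficient independently; the implicit function theorem then gives $\dim I(a)=(3g+1)-(2g+1)=g$. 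Your proposal omits this step entirely.

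Second, and more seriously, your connectedness argument rests on the assertion that ``for smooth $\Sigma$ this real Jacobian is connected.'' That is false in general: the fixed-point set of an antiholomorphic involution on a $g$-dimensional complex torus is a disjoint union of up to $2^{g}$ real $g$-tori, and for the Jacobian of a real curve with $k$ real ovals one gets $2^{k-1}$ components. The involution $\varrho$ on $\Sigma$ fixes the full preimage of $\SY^1$, which is one or two circles depending on the parity of the winding of $\l^{-1}a(\l)$, so connectedness of the real locus of $\mathrm{Jac}(\Sigma)$ cannot simply be invoked; one would have to show that the eigenline (Abel--Jacobi) map sends $I(a)$ onto a single coset of the identity component, which requires a separate positivity/non-speciality argument you do not give. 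The paper avoids this issue altogether: it proves that ${\mathcal{M}}_g^1$ is connected (its complement in ${\mathcal{M}}_g^0$ has codimension $\geq 2$), that $A:{\mathcal{P}}_g^1\to{\mathcal{M}}_g^1$ is a fibre bundle with constant fibre, and then exhibits one explicit $a$ (built from $\gamma=\l^g+1$, $\a=\mi k(\l^{g-1}+1)$) for which the fibre is connected by an explicit deformation of the matrix entries. Either adopt that route or substantiate the Jacobian identification and the component count.
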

The proof of Theorem \ref{th:isospectral-set-properties} follows in several steps. The compactness is a consequence of the next Proposition \ref{proper}. The second statement follows from Propositions \ref{maxrank} and \ref{immersion}. We shall prove several properties of the map
\begin{equation} \label{eq:map-A}
    A : {\mathcal{P}}_g \to {\mathcal{M}}_g,\,\xi_\l \mapsto -\l \det \xi_\l\,.
\end{equation}
\begin{proposition}
\label{proper}
The map $A$ in \eqref{eq:map-A} is proper.
\end{proposition}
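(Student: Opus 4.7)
The plan is to establish properness by showing that for any compact $K\subset\mathcal{M}_g$ the preimage $A^{-1}(K)$ is bounded in the ambient $(3g+1)$-dimensional real vector space containing $\pk$, and that any limit point of $A^{-1}(K)$ in that ambient space still lies in $\pk$.

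For boundedness, I would exploit the reality condition \eqref{realcoeff}: the rescaling $\lambda^{(1-g)/2}\xi_\l$ takes values in $\su$ on $\SY^1$, where under the identification $\su\cong\R^3$ from Section~1 one has $\|X\|^2=\det X$. Since $|\lambda^{(1-g)/2}|=1$ on $\SY^1$, this gives the clean identity
\[
\|\xi_\l\|^2 \;=\; \lambda^{1-g}\det\xi_\l \;=\; -\lambda^{-g}\,a(\lambda) \qquad(\lambda\in\SY^1),
\]
which is nonnegative by \eqref{eq:hermit}. Hence $\sup_{|\lambda|=1}\|\xi_\l\|$ is controlled by a continuous function of $a$, and so is uniformly bounded as $a$ ranges over compacts in $\mathcal{M}_g$. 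Since each coefficient $\hat\xi_d$ of the Laurent polynomial $\xi_\l$ is recovered as a Fourier--Cauchy coefficient of $\xi_\l|_{\SY^1}$, this sup bound controls every $\hat\xi_d$ uniformly, giving boundedness of $A^{-1}(K)$ in the ambient space.

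For closedness I would extract a coefficientwise convergent subsequence and verify that the limit $\xi_\l$ remains in $\pk$. The reality relations $\hat\xi_d=-\overline{\hat\xi_{g-1-d}}^t$ and the constraint $-\l\det\xi_\l\in K$ are closed and pass to the limit. What could fail are the two open conditions defining $\pk$, namely that $\hat\xi_{-1}\in\bigl(\begin{smallmatrix}0 & \mi\R^+ \\ 0 & 0\end{smallmatrix}\bigr)$ is strictly nonzero and that $\mathrm{tr}(\hat\xi_{-1}\hat\xi_0)\neq 0$. Both are controlled by $a(0)$: expanding $-\lambda\det\xi_\l$ around $\lambda=0$ using $\det\hat\xi_{-1}=\mathrm{tr}(\hat\xi_{-1})=0$ yields
\[
a(0)\;=\;\mathrm{tr}(\hat\xi_{-1}\hat\xi_0).
\]
Writing the limiting residue as $\hat\xi_{-1}=\bigl(\begin{smallmatrix}0 & \mi r \\ 0 & 0\end{smallmatrix}\bigr)$ with $r\ge 0$, this reads $a(0)=\mi r\,(\hat\xi_0)_{21}$, and since $a(0)\neq 0$ is part of the definition of $\mathcal{M}_g$, both $r>0$ and $\mathrm{tr}(\hat\xi_{-1}\hat\xi_0)\neq 0$ are forced. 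Therefore the limit lies in $\pk$ and $A^{-1}(K)$ is compact.

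The main obstacle I anticipate is precisely this potential escape from $\pk$ at the boundary: a priori the nilpotent residue $\hat\xi_{-1}$ could degenerate to zero, or the trace condition could fail. The one-line identity $a(0)=\mathrm{tr}(\hat\xi_{-1}\hat\xi_0)$ is the decisive input that rules this out and ties the openness of $\pk$ to the openness of $\mathcal{M}_g$ encoded by $a(0)\neq 0$.
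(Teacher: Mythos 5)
Your proof is correct, and the core boundedness argument --- the identity $\|\xi_\l\|^2=-\l^{-g}a(\l)$ on $\SY^1$ coming from the norm on $\su$, followed by the Cauchy integral bound on the Laurent coefficients --- is exactly the paper's argument. Your additional verification that limit points remain in $\pk$, via the identity $a(0)=\mathrm{tr}(\hat\xi_{-1}\hat\xi_0)$ which forces $\hat\xi_{-1}\neq 0$ and the trace condition in the limit, addresses a point the paper's proof passes over silently, and it is a genuine and correct refinement since $\pk$ is only an open subset of the ambient $(3g+1)$-dimensional vector space, so boundedness alone does not give compactness of the pre-image.
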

\begin{proof}
Since $A$ is continuous it suffices to show that pre-images of compact sets are bounded. Let $K \subset {\mathcal{M}}_g$ be compact. Then the map $\bbS^1 \times K \to \R,\, (\l,\, a) \mapsto \l^{-g} a(\l)$ is bounded. For the compactness of the pre-image it suffices to show that all Laurent-coefficients of a $\xi_\l \in {\mathcal{P}}_g$ are bounded, if $A(\xi_\l) \in K$. Fix a polynomial $a \in K$ and consider the isospectral set $I(a)$ as a closed subset of the $(3g+1)$-dimensional vector space ${\mathcal{P}}_g$. For $d=(1-g)/2$ the map $\lambda^d\xi _\l$ is traceless and skew-hermitian for $|\lambda|=1$. The determinant of traceless skew-hermitian $2 \times 2$ matrices is the square of a norm $\| \cdot \|$. The Laurent-coefficients of $\xi _\l= \sum_{i=-1}^g \l^i \ \hat \xi_i $ are
$$
 \hat   \xi_i = \frac{1}{2\pi\mi}\int_{\SY^1} \l ^{-i} \xi _\l\,\frac{d\l}{\l}\,.
$$
Using the norm gives
$$
    \| \hat \xi_i \| \leq \frac{1}{2\pi\mi}\int_{\SY^1} \| \l ^{(1-g)/2} \xi_\l \| \,\frac{d\l}{\l}  \leq \sup_{\l \in \SY^1} \sqrt{-\l^{-g}a(\l)}\,.
$$
Thus each entry of  $\hat \xi_i$ is bounded on $\bbS^1$, so $A$ is proper and $I(a)$ therefore compact.
\end{proof}
\begin{proposition} \label{maxrank}
Suppose $\xi_\l \in {\mathcal{P}}_g$ has no roots in $\lambda \in \C^\times$. Then the map $A$ in \eqref{eq:map-A} has maximal rank $2g+1$. Let $a(\l) =-\l \det \xi_\l$. Then $I(a)$ is a g-dimensional sub-manifold of ${\mathcal{P}}_g$.
\end{proposition}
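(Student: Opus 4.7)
The plan is to compute $dA_{\xi_\l}$ explicitly, establish its surjectivity onto $T_a\mathcal{M}_g$ by evaluating at the roots of $a$ and using Lagrange interpolation, and then invoke the implicit function theorem. Since $\xi_\l\in\mathfrak{sl}_2(\C)$ is traceless, $\det\xi_\l=-\tfrac{1}{2}\mathrm{tr}(\xi_\l^2)$, so
\[
dA_{\xi_\l}(\delta\xi_\l)=\lambda\,\mathrm{tr}(\xi_\l\,\delta\xi_\l).
\]
The reality conditions \eqref{realcoeff} on $\xi_\l$ and $\delta\xi_\l$ force the output to be a polynomial of degree $\leq 2g$ satisfying \eqref{eq:real}, hence in $T_a\mathcal{M}_g$; so $dA_{\xi_\l}$ is a real-linear map between real vector spaces of dimensions $3g+1$ and $2g+1$.

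For surjectivity I would use the fact that any $b\in T_a\mathcal{M}_g$ is determined by its values $b(\alpha_1),\dots,b(\alpha_{2g})$ at the distinct roots of $a$ together with one additional reality-compatible real datum (e.g.\ the middle coefficient $b_g\in\R$). At each root the matrix $\xi_{\alpha_i}\in\mathfrak{sl}_2$ is a nonzero nilpotent element (nonzero by the no-roots hypothesis, nilpotent because $\det\xi_{\alpha_i}=0$), so $N\mapsto\mathrm{tr}(\xi_{\alpha_i}N)$ is a nonzero linear functional on $\mathfrak{sl}_2$, and the evaluation $(dA_{\xi_\l}(\delta\xi_\l))(\alpha_i)=\alpha_i\mathrm{tr}(\xi_{\alpha_i}\delta\xi_{\alpha_i})$ attains any prescribed value by varying $\delta\xi_{\alpha_i}$ in $\mathfrak{sl}_2$. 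Because the matrix entries of $\delta\xi_\l$ are polynomials of the right degree for Lagrange interpolation at $g$ distinct points (one from each reality-pair of roots), and the reality condition on $\delta\xi_\l$ makes the values at the paired points $1/\bar\alpha_i$ automatically consistent, we can jointly attain any target tuple $(b(\alpha_i))$; a separate choice of the remaining coefficients of $\delta\xi_\l$ then achieves any value of the middle coefficient.

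The main obstacle will be the careful bookkeeping required to confirm that the reality-compatible variations $\delta\xi_\l\in T_{\xi_\l}\mathcal{P}_g$ indeed realise all $(2g+1)$ independent real degrees of freedom in $T_a\mathcal{M}_g$---that is, that the $(2g+1)\times(3g+1)$ real Jacobian of $A$ at $\xi_\l$ has full row rank. Once maximal rank is established at $\xi_\l$, the hypothesis of pairwise distinct roots of $a$ forces the same to hold at every $\xi_{\l'}\in I(a)$: a simultaneous zero of the matrix entries $A$, $B$, $C$ at some $\lambda_0\in\C^\times$ would make $-\lambda\det\xi_{\l'}$ vanish to order at least two at $\lambda_0$, contradicting simplicity of the roots. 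The implicit function theorem then yields $I(a)$ as a smooth submanifold of $\mathcal{P}_g$ of dimension $(3g+1)-(2g+1)=g$.
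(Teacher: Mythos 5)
Your high-level strategy --- compute $dA_{\xi_\l}(\dot\xi_\l)=\l\,\mathrm{tr}(\xi_\l\dot\xi_\l)$, prove surjectivity onto the $(2g+1)$-dimensional target, and apply the implicit function theorem --- is exactly the paper's, and your description of $T_a\mathcal{M}_g$ (values at the roots, paired by $\alpha\mapsto 1/\bar\alpha$, plus one real scaling of $a$) is correct. However, there are two genuine gaps. The first is that you have silently strengthened the hypothesis: the proposition assumes only that $\xi_\l$ has no roots in $\C^\times$, not that $a$ has $2g$ pairwise distinct roots. When $\xi_{\alpha_0}$ is a nonzero nilpotent matrix, $a$ can have a root of order $n\geq 2$ at $\alpha_0$ (this is precisely the bubbleton situation of Section~6, and the general case is what is needed to analyse $I(a)$ for singular spectral curves). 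At such a root, prescribing the value $\dot a(\alpha_0)$ does not exhaust $T_a\mathcal{M}_g$; one must also realise $\dot a^{(k)}(\alpha_0)$ for $k<n$. The paper does this by producing, for each $m=1,\dots,n$, a variation with $\dot a(\l)=\beta a(\l)(\l-\alpha_0)^{-m}+\l^m\bar\beta a(\l)(1-\bar\alpha_0\l)^{-m}$; your value-interpolation scheme has no counterpart for this.

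The second gap is that, even for simple roots, the surjectivity is the entire content of the statement and your ``Lagrange interpolation'' does not yet deliver it --- you yourself flag the rank verification as ``the main obstacle''. The difficulty is concrete: the variations one writes down immediately, namely scalar polynomial multiples $q(\l)\xi_\l$ and commutators $[C(\l),\xi_\l]$, give $\dot a=2qa$ and $\dot a=0$ respectively, so both annihilate every functional $\dot a\mapsto\dot a(\alpha_i)$; and one cannot prescribe the matrices $\dot\xi_{\alpha_1},\dots,\dot\xi_{\alpha_g}$ freely, since they would range over a $6g$-dimensional space while $T_{\xi_\l}\mathcal{P}_g$ is only $(3g+1)$-dimensional and its Laurent coefficients are coupled by the reality condition \eqref{realcoeff}. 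So surjectivity of the composite with the functionals $N\mapsto\mathrm{tr}(\xi_{\alpha_i}N)$ is a nontrivial rank condition that must be checked, not a consequence of degree counting. The paper's device --- the idea missing from your proposal --- is to take $q$ \emph{meromorphic} with a pole at $\alpha_0$ and cancel the resulting pole of $q\xi_\l$ by a meromorphic commutator $[Q_\l(m),\xi_\l]$, built inductively from the fact \eqref{eq:crochet} that the nonzero nilpotent $\xi_{\alpha_0}$ satisfies $\xi_{\alpha_0}=[Q,\xi_{\alpha_0}]$ for some $Q\in\su$, followed by a correction $[P,\xi_\l]$ restoring tangency to the normalisations of $\mathcal{P}_g$. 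Until you either carry out such a construction or actually verify the rank of your interpolation map (and extend it to multiple roots), the proof is incomplete.
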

\begin{proof}
Since $\det$ is the square of a norm on $\su$, at all roots of $a$ on $\bbS^1$, the corresponding $\xi_\l \in I(a)$ has to vanish. If $\xi_\l$ is without roots, then $a \in {\mathcal{M}}_g^0$. We show that the derivative of the map $A$ has rank 2g+1 at a potential $\xi_\l$ without roots, and then invoke the implicit function theorem. Hence it suffices to prove that for all roots $\alpha_0$ of $a$ of order $n$, and all $\beta \in \C$ there exists a tangent vector $\dot \xi_\l$ along ${\mathcal{P}}_g$ at $\xi_\l$, such that the corresponding derivative of $a$ is equal to
$$
    \dot a (\l) = - \l \det \xi_\l \, { \rm tr}(\xi_\l^{-1} \dot \xi_\l) = \frac{\beta a(\l)}{(\l-\alpha_0)^m} +\frac{\l ^m \bar \beta a(\l)}{(1-\bar \alpha_0 \l)^m}
$$
with $m=1,\,\ldots,\,n$. Such a vector field fixes all roots of $a$ except $\alpha_0$. The set of these vector fields form a 2g-dimensional real vector space.
Besides the roots of $a(\l)$ also the coefficient $b$ in (4.3) can be  changed by this variation. We consider $\dot \xi _\l= t\,\xi_{\l}$ with
$$
    \dot a (\l) = - \l \det \xi_\l {\rm tr}(\xi_\l^{-1} \dot \xi_\l)=-2 \l  t \det \xi _\l\,.
$$
This vector field preserves all roots of $a(\l)$, but changes $a(1)$ with variational field $\dot a (1) = t a(1)$, and $b \in \R^-$ is changing non-trivially. This will prove the theorem.

Now we construct vector fields $\dot \xi_\l$.  If $\alpha_0$ is a root of $a$ of order $n$, then $\det\xi_{\alpha_0}$ vanishes, and $\xi_{\alpha_0}$ is nilpotent. For a nonzero nilpotent $2\times 2$-matrix $\xi_{\alpha_0}$ there exists a matrix $Q \in \su$ such that $\xi_{\alpha_0} =[Q,\,\xi_{\alpha_0}]$.
To prove this remark, observe that it holds if $\xi_{0} = \bigl( \begin{smallmatrix} 0 & 1 \\ 0 & 0 \end{smallmatrix} \bigr)$, by setting $Q_0 = \bigl( \begin{smallmatrix} 1 & 0 \\ 0 & -1 \end{smallmatrix} \bigr)$. The general statement now follows since there exists $g \in \SU$ with $\xi _{\alpha_0} = g^{-1} \xi_{0} g$, and setting $Q=g^{-1} Q_0 g$.

We need the following basic fact: For $A,\,B \in \Sl (\C)$ and $A \neq 0$, we have
\begin{equation} \label{eq:crochet}
    {\rm tr}(AB)=0 \Longleftrightarrow B=[C,A] \,\,\hbox{ with some } C \in \Sl (\C)\,.
\end{equation}
For any $\xi_\l \in {\mathcal{P}}_g$, we have $\xi _\l^2 = -\det (\xi_\l) \,\un$ and at a root $\alpha_0$ of $a(\l)$ of order $n$, there exists for any $m=1,...,n$, a matrix
$$
\hat Q_\l (m)=Q_0 + Q_1 (\l - \a_0)+...+Q_{m-1} (\l - \a_0)^{m-1}
$$
with $Q_0,\,\ldots,\,Q_{n-1} \in \Sl (\C)$, such that $\xi_\l -[\hat Q_\l (m),\,\xi_\l]$ has at $\alpha_0$ a root of order $m$. The matrix $\hat Q_\l (m)$ is constructed inductively using \eqref{eq:crochet}.
We remark that at $\a_0$, the function  $\l \mapsto {\rm tr} (\xi_{\l}^2)$ has a root of order $n$.
Then there is $Q_0$ such that $\xi _\l - [Q_0,\,\xi_\l]= (\l - \a_0) \xi_{\l,1}$  and ${\rm tr} ( \xi_\l(  \xi_\l - [Q_0,\,\xi_\l]))=(\l - \a_0){\rm tr} ( \xi_\l  \xi_{\l,1}) $ has a root at $\a_0$ of order $n$. Then there is a matrix $Q_1$ with  $\xi_{\l,1}-[Q_1,\,\xi_\l]=(\l - \a_0)Q_2$. Now we define for $m=1,...,n$
$$
\tilde Q_\l (m) = - {}^t \bar Q_0 - {}^t \bar   Q_1 (1-\bar \a_0 \l)\l^{-1} -...-{}^t \bar  Q_{m-1} (1- \bar \a_0 \l)^{m-1}\l^{m-1}.
$$
Then $\xi_\l -  [\tilde Q _\l (m) , \,\xi_\l]$ has at $\bar \a_0 ^{-1}$ a root of order $m$. Now we define
\begin{equation*}
\begin{split}
    q_m(\l) &= \tfrac{\beta}{(\l-\a_0)^m} + \frac{\l^m \bar \beta}{(1-\a_0\l )^m}\,, \\
    Q_\l (m) &= \tfrac{\beta}{(\l-\a_0)^m}\,\hat Q_\l (m) + \tfrac{\l^m \bar \beta}{(1-\a_0\l )^m}\, \tilde Q_\l (m)\,.
\end{split}
\end{equation*}
There exists some $P \in \su$ such that
$$
    \dot \xi_\l =q_m(\l) \xi_\l - [Q_\l (m),\,\xi_\l] +[P,\,\xi_\l] \in T_{\xi_\l}{\mathcal{P}}_g \,.
$$
To see that we need to check
$\dot {\hat \xi}_{-1} \in \mi\mathbb{\R^+} \bigl( \begin{smallmatrix} 0 & 1 \\ 0 & 0 \end{smallmatrix} \bigr)$ and $\dot {|a|}(0) = 0$. Note that if $A \in \su$ and $B=\bigl( \begin{smallmatrix} 0 & 1 \\ 0 & 0 \end{smallmatrix} \bigr)$, then $[A,\,B ]= \bigl( \begin{smallmatrix} \alpha & \mi x \\ 0 & -\alpha \end{smallmatrix}\bigr)$ with $\a \in \C,\, x \in \R$. Then we can choose $P \in \su$ such that
$$
\dot {\hat \xi}_{-1}=\frac{\beta \xi_{-1} -\beta [Q_0 (m), \,\xi_{-1}]}{(-\a_0)^n} + [P,\, \xi_{-1}] \in \mi \R^+\begin{pmatrix} 0 & 1 \\ 0 & 0 \end{pmatrix}\,.
$$
For the second condition we have $(\dot a \bar a + a \bar{\dot a} )(0)= |a|^2(0) (-\a_0)^{-n}(\beta + \bar \beta)$, and we can choose $\beta \in \C$ such that the variational field keeps $|a(0)|$ unchanged along the deformation. This proves that $\dot \xi_\l \in T_{\xi_\l}{\mathcal{P}}_g$, and such vector fields span a $2g+1$ dimensional real vector space of vector fields in the complement of the kernel of the map. This proves that around $\xi_\l$ without zeroes, $I(a)$ is a real $g$-dimensional manifold.
\end{proof}
\begin{proposition}
\label{immersion}
For all $\xi _\l\in {\mathcal{P}}_g$ without roots the vector fields of the isospectral group action generate at $\xi_\l$ a real $g$-dimensional subspace of the tangent space ${\mathcal{P}}_g$ at $\xi_\l$.
\end{proposition}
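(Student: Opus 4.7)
The plan is to compute the rank of the linearization of the isospectral action $\pi: \C^g \times I(a) \to I(a)$ at the point $\xi_\l$. Differentiating $\pi(t)\xi_\l = B_\l(t)\xi_\l B_\l^{-1}(t)$ at $t=0$ produces the $\R$-linear tangent map $V: \C^g \to T_{\xi_\l}\pk$, $V(t) = [\dot B_\l(t),\xi_\l]$, where the infinitesimal Iwasawa splitting at the identity reads
\[
    \xi_\l\sum_{i=0}^{g-1}t_i\l^{-i} \;=\; \dot F_\l(t) + \dot B_\l(t),
\]
with $\dot F_\l(t) \in \Lambda_r\su$ and $\dot B_\l(t) \in \Lambda_r^+\Sl(\C)$. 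Since $\pi$ preserves $I(a)$, the image of $V$ lies in $T_{\xi_\l}I(a)$, which is $g$-dimensional by Proposition~\ref{maxrank}; hence $\mathrm{rk}_{\R}V \le g$. It therefore suffices to show that $\ker V$ is a real subspace of $\C^g$ of dimension exactly $g$.

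To identify $\ker V$, I would first observe that $V(t)=0$ iff $\dot B_\l(t)$ commutes with $\xi_\l$. The hypothesis that $\xi_\l$ has no roots on $\C^\times$ ensures that $\xi_\l$ is nonscalar at every $\l\in\C^\times$, so its centralizer in $\Sl(\C)$ is spanned by $\xi_\l$ itself. Any commuting $\dot B_\l$ is therefore of the form $\dot B_\l=q(\l)\,\xi_\l$ for a scalar $q$ analytic on $|\l|\le r$ with $q(0)=0$, the vanishing at $0$ being required so that $q\xi_\l$ remains holomorphic at $\l=0$ despite the simple pole of $\xi_\l$ there. Setting $s(\l):=\sum_{i=0}^{g-1}t_i\l^{-i}-q(\l)$, the complementary summand is $\dot F_\l(t)=s(\l)\,\xi_\l$, and the reality condition $\l^{g-1}\overline{\xi_{1/\bar\l}}^{\,t}=-\xi_\l$ translates the requirement $\dot F_\l\in\Lambda_r\su$ into the scalar identity $s(\l)=\l^{1-g}\overline{s(1/\bar\l)}$ on $\SY^1$, and hence, by analytic continuation, on a neighborhood of it.

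The last step is to match Laurent coefficients. Writing $s=\sum_{k\in\Z}s_k\l^k$, the support is contained in $\{1-g,\ldots,0\}\cup\{1,2,\ldots\}$: the polar part $\sum t_i\l^{-i}$ contributes to $[1-g,0]$ and $q=\sum_{n\ge 1}c_n\l^n$ contributes to $[1,\infty)$. The scalar reality identity becomes $s_k=\bar s_{1-g-k}$ for every $k\in\Z$, and for $k=n\ge 1$ it forces $-c_n=\bar s_{1-g-n}=0$ since $1-g-n<1-g$ falls outside the support of $s$. Hence $q\equiv 0$, so $\dot B_\l(t)=0$ and $\xi_\l\sum t_i\l^{-i}\in\Lambda_r\su$; reading off the remaining coefficients in $[1-g,0]$ yields precisely $t_{g-1-i}=\bar t_i$ for $i=0,\ldots,g-1$. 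This defines a real subspace $L\subset\C^g$ of real dimension $g$ (a short check distinguishes $g$ even and odd), which is therefore $\ker V$. Consequently $\mathrm{rk}_{\R}V=2g-g=g$, as required.

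The main technical obstacle I anticipate is the rigorous passage from the scalar identity on $\SY^1$ to the coefficient-matching statement for Laurent series used above. This requires $s(\l)$ and $q(\l)$ to be genuine analytic (respectively meromorphic) functions on the domains $A_r$ and $|\l|\le r$, which in turn hinges on the no-roots hypothesis: only then can one unambiguously divide by $\xi_\l$ to extract the scalar $s$ (respectively $q$) from the matrix-valued $\dot F_\l$ (respectively $\dot B_\l$) on each side of the Iwasawa splitting.
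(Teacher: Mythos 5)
Your proposal is correct and follows essentially the same route as the paper: linearize the action, use that the centralizer of the nonvanishing $\xi_\l$ is $\C\,\xi_\l$ to reduce to a scalar splitting $\sum t_i\l^{-i}=f^++f^-$ with a reality condition on the unitary part and vanishing at $\l=0$ of the positive part, and conclude that the kernel is $\{t_{g-1-i}=\bar t_i\}$. The only difference is that you spell out, via Laurent-coefficient matching, the step the paper merely asserts (that these conditions force the positive scalar part to vanish identically), which is a worthwhile but not structurally different addition.
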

\begin{proof}
The vector field $(t_0,\,\ldots,\,t_{g-1})$ of the isospectral action at $\xi_\l$ takes the values
$$
    \dot \xi_\l = \bigl[ \bigl( \sum_{i=0}^{g-1} \l^{-i}t_i \,\xi_\l  \bigr)^+, \,\xi_\l \bigr] = - \bigr[ \bigl( \sum_{i=0}^{g-1} \l^{-i}t_i \,\xi_\l \bigr)^-, \,\xi_\l \bigr]\,.
$$
Here
$$
\sum_{i=0}^{g-1} \l^{-i}t_i \,\xi_\l  = \bigl( \sum_{i=0}^{g-1} \l^{-i} t_i \,\xi_\l \bigr)^+ + \bigl( \sum_{i=0}^{g-1} \l^{-i} t_i \,\xi_\l  \bigr)^-
$$
is the Lie algebra decomposition of the Iwasawa decomposition. For $A \in \Sl (\C)$ with $A \neq 0$ we have that $\{ B \in \Sl (\C) \mid [A,\,B]=0\} = \{ xA \mid x \in \C\}$. Hence the vector field corresponding to $(t_0,\,\ldots,\,t_{g-1})$ vanishes at $\xi_\l$ if and only if there exists a decomposition of the polynomial $\sum_{i=0}^{g-1} \l^{-i}t_i =f^+ (\l) + f^- (\l)$ into complex functions such that
$$
\bigl( \sum_{i=0}^{g-1} \l^{-i}t_i \, \xi_\l \bigr)^+ = f^+ \xi_\l  \quad \hbox{ and } \quad \bigl( \sum_{i=0}^{g-1} \l^{-i}t_i \,\xi_\l \bigr)^- = f^- \xi_\l \,.
$$
Hence $\overline{ f^+ (\bar \l ^{-1})} = \l^g f^+ (\l)$ and $f^- (0)=0$.

The polynomial $\sum_{i=0}^{g-1} \l^{-i}t_i$ is a linear combination of such functions if and only if $t_{g-1-i}=\bar t_i$. The subspace of such $(t_0,\,\ldots,\,t_{g-1})$  is a real $g$-dimensional subspace of $\C^g$. This implies the proposition.
\end{proof}
Recall ${\mathcal{M}}_g$ from \eqref{eq:calM-sets}, and define
\begin{align*}
    {\mathcal{M}}_g^1 &= \{ \,a \in {\mathcal{M}}_g \mid \mbox{ $a$ has $2g$-pairwise distinct roots }\,\}\,, \\
    {\mathcal{P}}_g^{1} &=\{\, \xi_\l \in {\mathcal{P}}_g \mid a(\l)=-\l \det \xi_\l \in {\mathcal{M}}_g^{1} \}\,.
\end{align*}
\begin{proposition}
For all $a \in {\mathcal{M}}_g^1$, the isospectral action $\pi : \R^g \times I(a) \to I(a)$ acts transitively on $I(a)$ and the mapping $A: {\mathcal{P}}_g^{1} \to {\mathcal{M}}_g^{1}$ is a principal bundle with fibre $I(a) = (\bbS^1)^g$.
\end{proposition}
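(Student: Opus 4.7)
The plan combines the compactness of $I(a)$ (Proposition \ref{proper}), the smooth $g$-dimensional manifold structure of $I(a) \subset \mathcal{P}_g$ (Proposition \ref{maxrank}), and the infinitesimal transitivity of the isospectral action (Proposition \ref{immersion}). First I would verify that the action preserves $I(a)$: conjugation by $B_\l(t)\in \Lambda_r^+ \SL (\C)$ leaves $-\l\det\xi_\l = a$ unchanged, and upper-triangularity of $B_0$ with positive real diagonal preserves both the residue normalisation $\hat\xi_{-1}\in\mi\R^+ \bigl( \begin{smallmatrix} 0 & 1 \\ 0 & 0 \end{smallmatrix} \bigr)$ and the value $a(0)=\beta_{-1}\gamma_0$. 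Here $\R^g$ is interpreted as the real $g$-dimensional subgroup of $\C^g$ on which the action is infinitesimally faithful, or equivalently as the quotient of $\C^g$ by the infinitesimal stabiliser identified inside the proof of Proposition \ref{immersion}.

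For $a\in\mathcal{M}_g^1$, every $\xi_\l \in I(a)$ is root-free on $\C^\times$, because a zero at $\a_0$ would force $a$ to vanish to order at least two at $\a_0$, contradicting pairwise distinctness of the $2g$ roots. Therefore Proposition \ref{maxrank} applies and $\dim_\R T_{\xi_\l} I(a) = g$, while Proposition \ref{immersion} says that the vector fields of the $\R^g$-action span a real $g$-dimensional subspace of that tangent space. The orbit map $t \mapsto \pi(t)\xi_\l$ is then a local diffeomorphism at $t=0$, so each orbit is open in $I(a)$. Combined with compactness (Proposition \ref{proper}) and the fact that $I(a)$ is partitioned into open orbits, each orbit is also closed, hence compact. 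Triviality of the infinitesimal stabiliser forces the stabiliser in $\R^g$ to be discrete; a compact $\R^g$-orbit by a discrete stabiliser is $\R^g/\Gamma$ for a full-rank lattice $\Gamma$, i.e., diffeomorphic to $(\bbS^1)^g$.

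The main obstacle is to rule out multiple orbits, that is, to show $I(a)$ is connected. My plan is a continuity argument: Propositions \ref{proper} and \ref{maxrank} together make $A:\mathcal{P}_g^1 \to \mathcal{M}_g^1$ a proper surjective submersion of smooth manifolds, hence a smooth locally trivial fibre bundle by Ehresmann's theorem. In particular $\#\pi_0(I(a))$ is locally constant on every connected component of $\mathcal{M}_g^1$. To pin the value to $1$ I would either identify $I(a)$ with the real locus of the Prym-type subtorus of the Jacobian of the spectral curve $\Sigma$ cut out by the involutions \eqref{eq:involutions} (whose connectedness is classical for real Abelian varieties of this type), or argue by induction on $g$: merge a pair of roots of $a$ onto $\bbS^1$, apply the reduction of Proposition \ref{th:pkf-reduce} to a spectral curve of genus $g-1$ where the isospectral set is inductively connected, and then perturb back into $\mathcal{M}_g^1$ using local triviality of $A$.

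Once transitivity of the $\R^g$-action is established, the principal bundle statement is immediate. Over any contractible neighbourhood $U\subset\mathcal{M}_g^1$, Ehresmann trivialises $A$; a smooth section of $A|_U$ combined with the simply transitive $(\bbS^1)^g$-action on each fibre produces an equivariant diffeomorphism $A^{-1}(U)\cong U\times(\bbS^1)^g$, exhibiting $A:\mathcal{P}_g^1\to\mathcal{M}_g^1$ as a principal $(\bbS^1)^g$-bundle.
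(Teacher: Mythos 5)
Your treatment of the orbit structure coincides with the paper's: every $\xi_\l$ over $a\in\mathcal{M}_g^1$ is root-free, so Proposition \ref{maxrank} gives the $g$-dimensional fibres, Proposition \ref{immersion} makes each orbit open, openness of all orbits makes each closed, Proposition \ref{proper} gives compactness, and a compact orbit of $\R^g$ with discrete stabiliser is $\R^g/\Gamma\cong(\bbS^1)^g$. The reduction of transitivity to connectedness of $I(a)$, via local constancy of $\#\pi_0$ of the fibres of the proper submersion $A$, is also the paper's strategy. The divergence — and the gap — is in how you establish connectedness. Your induction route does not work as stated: you propose to merge two roots of $a$ onto $\bbS^1$ and ``perturb back into $\mathcal{M}_g^1$ using local triviality of $A$'', but the degenerate polynomial $a_0$ with a double root on $\bbS^1$ lies outside $\mathcal{M}_g^1$ (indeed outside $\mathcal{M}_g^0$), so Ehresmann gives you nothing in a neighbourhood of $a_0$. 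There the fibre dimension drops from $g$ to $g-1$ ($I(a_0)\cong I(\tilde a)\cong(\bbS^1)^{g-1}$), and connectedness of the limit fibre does not bound the number of components of nearby fibres: properness only places $A^{-1}(U)$ in a small neighbourhood of $I(a_0)$, and a nearby $g$-torus fibre could a priori consist of several components each collapsing onto $I(a_0)$. Your alternative route via the real Prym-type subtorus of $\mathrm{Jac}(\Sigma)$ is a genuinely different, classical approach, but as written it imports exactly the hard step: one must construct the eigenline-bundle map, linearise the flows, and prove that $I(a)$ surjects onto the relevant real subtorus (the Krichever-type inverse construction); moreover real loci of real abelian varieties generically have $2^k$ components, so ``classical connectedness'' requires identifying precisely which component is hit. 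None of that machinery is developed in the paper.

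Two further points. First, your local-constancy argument only pins $\#\pi_0(I(a))=1$ everywhere if $\mathcal{M}_g^1$ is connected; the paper proves this (path-connectedness of $\mathcal{M}_g^0$ by linear interpolation after a rotation, plus the fact that $\mathcal{M}_g^0\setminus\mathcal{M}_g^1$ has codimension at least $2$), and your proposal omits it. Second, for comparison: the paper closes the argument by exhibiting one explicit $a\in\mathcal{M}_g^1$ with $I(a)$ connected, namely a potential with $\l\beta=\gamma$ where $\gamma=\l^g+1$ has all roots on $\bbS^1$ and $\a=\mi k(\l^{g-1}+1)$ for small $k>0$; connectedness of that fibre is shown directly by interpolating the diagonal entry $\a_t=t\a+(1-t)\tilde\a$ and solving uniquely for the off-diagonal entries along the path. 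Replacing that elementary construction by either of your two sketches would require substantial additional work, so as it stands the proof is incomplete at its central step.
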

\begin{proof}
At all roots of $\xi_\l \in {\mathcal{P}}_g$, the determinant $\det \xi_\l$ has a higher order root. Hence all $\xi_\l \in {\mathcal{P}}_g^{1}$ have no roots on $\C^\times$. Proposition \ref{maxrank} implies that $A: {\mathcal{P}}_g^1 \to {\mathcal{M}}_g^1$ has maximal rank $2g+1$ and induces a fibre bundle, whose fibres are real $g$-dimensional manifolds. The isospectral action preserves the determinant and thus the fibres. Proposition \ref{immersion} implies that for all $\xi_\l \in {\mathcal{P}}_g^1$ the corresponding orbit of the isospectral action is an open submanifold of the corresponding fibre. If $\pi(t_n)\,\xi_\l$ converges to $\tilde\xi_\l \in {\mathcal{P}}_g ^{1}$ for a sequence $(t_n)_{n \in \N}$ in $\R^g$, then the orbit of $\tilde\xi_\l$ is again an open submanifold of the corresponding fibre. Thus $\pi(t_n)\,\xi_\l$ belongs to the orbit of $\tilde\xi_\l$ for sufficiently large $n \in \N$. Then $\pi(t_n)\,\xi_\l = \pi(t'_n)\,\tilde\xi_\l$ and $\tilde\xi _\l= \pi(t_n-t'_n)\,\xi_\l$ is in the orbit of $\xi_\l$. This shows that the orbits of the isospectral set $I(a)$ are open and closed submanifolds of the fibre. Due to Proposition \ref{proper} the fibers are compact. Therefore all orbits are compact as well. Due to Proposition \ref{immersion}, for all $\xi_\l \in {\mathcal{P}}_g^1$ the stabilizer subgroup
\begin{equation} \label{eq:stabilizer}
    \Gamma_{\xi_\l} = \{ t \in \R^g \mid \pi (t)\,\xi_\l = \xi _\l\}
\end{equation}
is discrete and $\R^g / \Gamma_{\xi_\l}$ is diffeomorphic to the connected component of the fibre of $\xi_\l$. We conclude that $\Gamma_{\xi_\l}$ is a lattice in $\R^g$ isomorphic to $\Z^g$. This shows that $A: {\mathcal{P}}_g^1 \to {\mathcal{M}}_g^1$ is a fibre bundle, whose fibres have connected components all isomorphic to $(\bbS^1)^g$.

It remains to prove that $I(a)$ has only one connected component. We first show that ${\mathcal{M}}_g^0$ (see \eqref{eq:calM-sets}) is path connected: If $a,\,\tilde a \in {\mathcal{M}}_g^0$ satisfy
\begin{equation} \label{eq:path-connected}
    t\,\tilde a(0) + (1-t)\,a(0) \neq 0 \quad \mbox{ for all } t\in[0,\,1]
\end{equation}
then $t\,\tilde a + (1-t)\,a \in {\mathcal{M}}_g^0$ for all $t\in[0,\,1]$.
If $a,\,\tilde a \in {\mathcal{M}}_g^0$ do not satisfy \eqref{eq:path-connected}, then modifying $\lambda \mapsto a(\lambda)$ by the rotation $\lambda \mapsto e^{-\mi g \theta} a(e^{\mi\theta} \lambda)$ for some suitable $\theta \in \R$ we can ensure that \eqref{eq:path-connected} holds. Hence ${\mathcal{M}}_g^0$ is path connected.

Since ${\mathcal{M}}_g^1$ is an open subset of ${\mathcal{M}}_g^0$, whose complement ${\mathcal{M}}_g^0\setminus{\mathcal{M}}_g^1$ has codimension at least $2$, we conclude that ${\mathcal{M}}_g^1$ is connected and $A: {\mathcal{P}}_g^1 \to {\mathcal{M}}_g^1$ has maximal rank $2g+1$ at any point. Hence it remains to show that there exists at least one $a \in {\mathcal{M}}_g^1$ for which $I(a)$ has only one connected component.

Denote the entries of $\xi_\l \in {\mathcal{P}}_g$ by polynomials $\a, \beta, \gamma$ so that $$
    \xi_\l =  \begin{pmatrix} \a(\l) & \beta(\l) \\ \gamma(\l) & -\a(\l) \end{pmatrix}\,.
$$
Then $a(\l) = -\lambda \det \xi_\l = \l \a^2 + \l\beta \gamma$. Here $\a$ is a polynomial of degree at most $g-1$. For $|\l|=1$ the polynomial $\l^{\frac{1-g}{2}} \a \in \mi\R$ and $\l^{1-g} \a^2 \leq 0$, and therefore $\l^{1-g}\beta \gamma \in \R$.

Now we consider a potential $\xi_\l$ with $\l \beta= \gamma$, and $\gamma$ has only roots on $|\l|=1$. We claim in this case that $I(a)$ is connected. This condition on $\beta$ implies that $a(\l)=\l \alpha^2 +\gamma^2$ and $\l^g  \overline{\gamma ( \bar \lambda ^{-1})}=-\l \beta(\l)=-\gamma (\l)$. Now observe that $\l^{-g}\gamma^2 \leq 0$ and $\l^{-g}  a(\l) \leq \l^{1-g} \a^2 \leq 0$.

Let $\tilde \xi_\l \in I(a)$ with entries $\tilde \a, \tilde \beta, \tilde \gamma$. We construct a family $\xi_{\l,t} = \bigl( \begin{smallmatrix} \a_t & \beta_t \\ \gamma_t & -\a_t \end{smallmatrix} \bigr)$  with $\a_t= t \a + (1-t) \tilde \a$. We prove that there exist polynomials $\beta_t, \gamma_t$ uniquely defined such that $\xi_{\l,t} \in I(a)$ for all $t \in [0,\,1]$,
with $\beta_1=\beta$, $\gamma_1=\gamma$ and $\beta_0 = \tilde \beta$, $\gamma_0=\tilde \gamma$.

Since $\l^{1-g} \tilde \a ^2 \leq 0$ for $|\l|=1$ we have  $\l^{1-g}  \a_t ^2 \leq 0$ for all $t \in [0,\,1]$. Now we consider the polynomial $p_t(\l)=a(\l)-\l^{1-g}  \a_t ^2$. If $\a_0$ is a root of $p_t(\l)$, then $\bar \a_0^{-1}$ is a root of $p_t(\l)$.
For $t=0$ we know that $p_0=\l \tilde \beta\, \tilde \gamma$ where roots of $\tilde \beta$ are symmetric to the roots of $\tilde \gamma$. At roots of $p_t$ we can define $\beta_t$ and $\gamma_t$ with $\beta_1 \in \mi\R^+$.
At $t=1$, we have $p=\gamma^2$ and all the roots of $p_1$ are double roots on the unit circle. Then $\gamma$ is defined uniquely, and $I(a)$ is connected, if we can find such a path with $a \in {\mathcal{M}}_g^1$.

Therefore we consider $\gamma= \l^{g}+1$ and $\a=\mi k(\l^{g-1}+1)$ with $k \in \R^+$. Then $0=\l^{g}+1=\l^{g-1}+1$ implies $\l^{g-1}(1-\l)=0$. Then the polynomials $\gamma$ and $\a$ do not have common roots. Hence at $k=0$ we have $\l^{-g} a(\l)=\l^{1-g} \a^2 + \l^{-g} \gamma^2 \leq 0$ for $|\l|=1$, and $a$ has only double roots on the unit circle.
For $k>0$ small enough, the roots change. But there are no roots on the unit circle and the roots are simple and conjugate, so that $a \in {\mathcal{M}}^1_g$.
\end{proof}


\section{Periods}

Suppose $X_\lambda = ( G_\lambda,\, h_\l)$ is an associated family of minimal surfaces in $\bbS^2 \times \bbR$. For one member of this family to be periodic, say for $\lambda = 1$ with period $\tau \in \C^\times$, this means that $X_1 (z+\tau) = X_1(z)$ for all $z \in \C$, or equivalently $G_1(z+\tau)=G_1(z)$ and $h_1(z + \tau)= h_1(z)$ for all $z \in \C$. If $G_\lambda (z) = F_\lambda (z)\, \sigma_3\, F_\lambda (z)^{-1}$ and $h_\lambda (z) = {\rm Re}(-4 \mi \sqrt{\beta_{-1} \gamma_0 \lambda^{-1}}\,z)={\rm Re}(- \mi e^{\mi\Theta/2}\sqrt{ \lambda^{-1}}\,z)$ (where $Q=\tfrac{1}{4}e^{\mi\Theta} (dz)^2$) then periodicity reads
\[
    \bigl[ F^{-1}_1 (z)F_1 (z+\tau),\,\sigma_3 \bigr] = 0 \quad \mbox {and} \quad
        {\rm Re}(-\mi  e^{\mi\Theta/2}\,\tau) = 0\,.
\]
The {\bf monodromy} of an extended frame $F_\l$ with respect to the period $\tau$ is the matrix
$$
    M_\l (\tau) = F_\lambda (z)^{-1}\,F_\lambda (z+\tau)\,.
$$
Thus periodicity of the horizontal part reads $[M_\l (\tau),\,\sigma_3] =0$.
Due to \eqref{eq:symb}, the monodromy $\C^\times \to \SL
(\C),\,\lambda \mapsto M_\l (\tau)$ is a holomorphic map with
essential singularities at $\lambda = 0,\,\infty$.

For a periodic immersion its conformal factor $\o$ is periodic, and hence also $\a(\o)$ in \eqref{eq:symb} is periodic.
This in turn implies that $dM_\l (\tau) = 0$ so that $M_\l (\tau)$ does not depend on $z$. Hence $M_\l (\tau) = F_\lambda (0)^{-1}\,F_\lambda (\tau) = F_\lambda (\tau)$ since $F_\l (0)= \un$.

Let $\zeta_\l$ be a periodic solution of the Lax equation \eqref{eq:lax-eq} with initial value $\xi_\l\in\pk$, with period $\tau$ so that $\zeta_\l(z+\tau) = \zeta_\l(z)$ for all $z \in \C$. Then also the corresponding $\alpha(\zeta_\l)$ in \eqref{eq:alpha-zeta} is $\tau$-periodic. Let $dF_\l = F_\l \alpha(\zeta_\l),\,F_\l(0)=\un$ and $M_\l (\tau) = F_\lambda(\tau)$ be the monodromy with respect to $\tau$. Then for $z=0$ we have $\xi_\l= \zeta_\l(0) = \zeta_\l(\tau) =
F^{-1}_\l (\tau) \,\xi_\l\,F_\lambda (\tau) = M^{-1} _\l (\tau)\, \xi_\l \,M_\l (\tau)$ and thus
$$
    [\,M_\l (\tau) ,\,\xi_\l\,] = 0\,.
$$
The monodromy takes values in $\SU$ for $|\lambda |=1$. The monodromy depends on the choice of base point, but its conjugacy class and hence eigenvalues $\mu,\,\mu^{-1}$ do not.
The eigenspaces of $M(\l,\,\tau)$ depend holomorphically on $(\nu,\, \l)$. The eigenvalues of $\xi_\l$ and $M_\l (\tau)$ are different functions on the spectral curve $\Sigma$.
\begin{proposition} \label{th:eigenvalues}
Let $M_\l = \bigl( \begin{smallmatrix} A & C \\ B & D \end{smallmatrix} \bigr) \in\Lambda\SU$ and $\xi_\l = \bigl( \begin{smallmatrix} \a & \beta \\ \gamma & -\alpha \end{smallmatrix} \bigr) \in \pk$ with $[M_\l,\,\xi_\l]=0$. Assume $\nu \neq 0$ and $\mu^2 \neq 1$. Then $M_\l$ and $\xi_\l$ have same eigenvectors $\psi_+ =(1,\, (\nu -\a)/\beta),\, \psi_- =(1,\, -(\nu + \a)/\beta )$ with
\begin{alignat*}{2}
    \xi_\l \psi_+ &= \nu\,\psi_+ &\hbox{ and }  \quad M \psi_+ &= \mu\, \psi_+\,,\\
    \xi_\l \psi_- &= - \nu\, \psi_- &\hbox{ and } \quad M \psi_- &= \mu^{-1} \,\psi_-\,.
\end{alignat*}
The involution $\eta : (\l,\nu) \to (\bar \l^{-1}, \bar \l^{1-g} \bar \nu)$ acts on $\mu$ by $\eta ^* \mu = \bar \mu$.
\end{proposition}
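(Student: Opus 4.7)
The plan is to handle the shared-eigenvector claim first and then derive the equivariance $\eta^*\mu = \bar\mu$. For the first part, since $\nu \neq 0$ the two eigenvalues $\pm\nu$ of $\xi_\l$ are distinct, so $\xi_\l$ is diagonalisable with one-dimensional eigenlines. Because $[M_\l,\xi_\l]=0$, the matrix $M_\l$ preserves each such eigenline and acts on it by a scalar; the two scalars multiply to $\det M_\l = 1$, so they are of the form $\mu$ and $\mu^{-1}$, and the hypothesis $\mu^2 \neq 1$ makes the labelling unambiguous. The explicit formulas for $\psi_\pm$ are then read off by solving the kernel of $\xi_\l \mp \nu\,\un$ row by row; consistency of the two rows is precisely $\nu^2 = \alpha^2 + \beta\gamma$, which is the characteristic equation of $\xi_\l$.

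For the equivariance, I would first compute the $\xi_{\l'}$-eigenvector at $\eta(\l,\nu) = (\l',\nu')$ with $\l' = 1/\bar\l$ and $\nu' = \bar\l^{1-g}\bar\nu$. The reality relation $\l^{g-1}\overline{\xi_{1/\bar\l}}^t = -\xi_\l$ gives $\alpha(\l') = -\bar\l^{1-g}\bar\alpha$, $\beta(\l') = -\bar\l^{1-g}\bar\gamma$ and $\gamma(\l')= -\bar\l^{1-g}\bar\beta$ (the transpose exchanges $\beta$ and $\gamma$). Plugging into $\psi'_+ = (1,(\nu'-\alpha(\l'))/\beta(\l'))$ and using $(\nu+\alpha)/\gamma = \beta/(\nu-\alpha)$, which is a rearrangement of $\nu^2 = \alpha^2+\beta\gamma$, one sees that $\psi'_+$ is a nonzero scalar multiple of $J\bar\psi_+$, where $J = \bigl(\begin{smallmatrix} 0 & 1 \\ -1 & 0 \end{smallmatrix}\bigr)$.

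Next I encode the reality of $M_\l$ in a usable form. The standard $\SL(\C)$ identity $N^{-1} = J N^t J^{-1}$ combined with $\overline{M_{1/\bar\l}}^t = M_\l^{-1}$ rewrites the reality condition as $M_{\l'} = J\overline{M_\l}J^{-1}$. Acting on $J\bar\psi_+$ and using $M_\l\psi_+ = \mu\psi_+$ yields
\[
    M_{\l'}(J\bar\psi_+) \;=\; J\overline{M_\l}J^{-1}J\bar\psi_+ \;=\; J\overline{M_\l\psi_+} \;=\; \bar\mu\,J\bar\psi_+,
\]
so the eigenvalue of $M_{\l'}$ on the $\nu'$-eigenline of $\xi_{\l'}$ is $\bar\mu$, which is exactly $\eta^*\mu = \bar\mu$. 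The delicate step is the identification $\psi'_+ \propto J\bar\psi_+$ rather than $J\bar\psi_-$: a sign error there would produce the incorrect relation $\eta^*\mu = \bar\mu^{-1}$, so the reality relations for $\alpha,\beta,\gamma$ must be tracked carefully.
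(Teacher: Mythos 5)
Your proof is correct and follows essentially the same route as the paper: both arguments compute the $\xi_{1/\bar\l}$-eigenvector at the $\eta$-image point (your $\psi'_+\propto J\bar\psi_+$ is exactly the paper's $\tilde\psi_-=(1,-(\bar\nu+\bar\a)/\bar\gamma)$) and then extract the eigenvalue $\bar\mu$ from the reality condition $\overline{M_{1/\bar\l}}^t=M_\l^{-1}$. Your use of the identity $M_{1/\bar\l}=J\overline{M_\l}J^{-1}$ merely packages more cleanly the entry-wise ``direct computation'' the paper performs with the system in $A,B,C,D$.
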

\begin{proof}
From the reality condition $\l^{g-1}\overline{\xi_{1/\bar \l}}{\,}^t=- \xi_\l$ or equivalently $\xi_{1/\bar \l}=- \bar \l^{1-g} \overline{\xi_\l}{\,}^t$ and with $\tilde{\psi}_-=(1,\,-(\bar \nu +\bar \a)/\bar \gamma)$ we have
$$
\xi_{1/\bar \l}\tilde{\psi}_- =- \bar \l^{1-g} \,\begin{pmatrix}  \bar \a & \bar \gamma \\ \bar  \beta & - \bar \alpha \end{pmatrix}\,\tilde{\psi}_- = \bar \l^{1-g} \bar\nu\, \tilde{\psi}_-\,.
$$
Since $\overline{M_{1/\bar \l}}^t{\,}^{-1} = M_\l$, and $ M_\l \psi_+=\mu\,\psi_+$, and $\psi_+ =(1,\,(\nu -\a)/\beta)=(1,\,  \gamma/(\nu +\a))$, we obtain the system
\[
    A+ C \frac{\gamma}{\nu +\a}= \mu  \quad \mbox{ and } \quad
    B+D  \frac{\gamma}{\nu +\a}= \mu  \frac{\gamma}{\nu +\a}\,.
\]
This implies by direct computation that $M_{1/\bar \l}\,\tilde{\psi}_- =  \overline{ M_{\lambda}^{-1} }^t \tilde{\psi}_- = \bar \mu \,\tilde{\psi}_-$.
\end{proof}
At $\l=0$ and $\l=\infty$ a monodromy $M_\l (\tau)$ has essential singularities. Next we study the behavior of $\mu = \mu(\lambda,\,\tau)$ at these two points.
\begin{lemma}\label{fdf}
Let $\psi = \psi(\lambda,\,z)=(1,\, h(\lambda,\,z))$ be an eigenvector of $\zeta_\lambda (z)= F_\lambda (z)^{-1} \,\xi_\l\,F_\lambda (z)$. Then there exists a complex function $f =f(\lambda,\,z)$ such that
\begin{equation}\label{eq:definitionf}
    f (\lambda,\,z)\,\psi(\lambda,\,z) = F^{-1} _\l (z) \,\psi (\lambda,\,0)
\end{equation}
which satisfies
\begin{enumerate}
\item $ f^{-1} df = -\frac{\mi}{4} \l^{-1/2} \exp(\mi\Theta/2) \,dz + O (1)$ in a neighborhood of $\l=0$\,,
\item $ f^{-1} df = -\frac{\mi}{4} \l^{1/2}  \exp(-\mi\Theta/2)\,d \bar z + O (1)$ in a neighborhood of $\l = \infty$.
\end{enumerate}
\end{lemma}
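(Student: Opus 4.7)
The plan is to introduce the vector $\Psi(\lambda,z):=F_\lambda^{-1}(z)\,\psi(\lambda,0)$ and first show that it is proportional to $\psi(\lambda,z)$, so that the scalar $f$ in \eqref{eq:definitionf} is well-defined. Since $\psi(\lambda,0)$ is an eigenvector of $\xi_\lambda=\zeta_\lambda(0)$ for some eigenvalue $\nu$, and $\zeta_\lambda(z)=F_\lambda^{-1}(z)\,\xi_\lambda\,F_\lambda(z)$, the action of $F_\lambda^{-1}(z)$ carries $\psi(\lambda,0)$ to an eigenvector of $\zeta_\lambda(z)$ for the same eigenvalue $\nu$. Because this eigenspace is generically one-dimensional and contains $\psi(\lambda,z)$, we obtain $\Psi=f\psi$ for a unique scalar $f(\lambda,z)$, equal to the first component of $\Psi$ since $\psi_1\equiv 1$.

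Next I would differentiate. From $F_\lambda^{-1}dF_\lambda=\alpha(\zeta_\lambda)$ we have $dF_\lambda^{-1}=-\alpha(\zeta_\lambda)\,F_\lambda^{-1}$, hence $d\Psi=-\alpha(\zeta_\lambda)\Psi=-f\,\alpha(\zeta_\lambda)\psi$; combining with $d\Psi=df\cdot\psi+f\,d\psi$ and taking first components (using $\psi_1=1$, $(d\psi)_1=0$) yields $f^{-1}df=-(\alpha(\zeta_\lambda)\psi)_1$. Inserting \eqref{eq:alpha-zeta} gives
\[
    f^{-1}df=-(\alpha_0+\beta_{-1}\lambda^{-1}h)\,dz+(\bar\alpha_0+\bar\gamma_0 h)\,d\bar z\,.
\]
The $\alpha_0\,dz$ and $\bar\alpha_0\,d\bar z$ terms are $\lambda$-independent and thus $O(1)$ at both punctures, so the asymptotics are governed by $\beta_{-1}\lambda^{-1}h$ near $\lambda=0$ and by $\bar\gamma_0 h$ near $\lambda=\infty$.

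Then I would expand $h=(\nu-\alpha)/\beta=\gamma/(\nu+\alpha)$ at each puncture. Near $\lambda=0$, use $\nu^2=\lambda^{-1}a(\lambda)$, $\beta\sim\beta_{-1}\lambda^{-1}$, $\alpha=O(1)$ to obtain $\nu\sim\sqrt{a(0)}\,\lambda^{-1/2}$ and $h\sim\sqrt{a(0)}\,\lambda^{1/2}/\beta_{-1}$, so $\beta_{-1}\lambda^{-1}h=\sqrt{a(0)}\,\lambda^{-1/2}+O(\lambda^{1/2})$; the branch $\sqrt{a(0)}=\tfrac{\mi}{4}e^{\mi\Theta/2}$ determined by $a(0)=-\tfrac{1}{16}e^{\mi\Theta}$ then yields (1). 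Near $\lambda=\infty$, the reality $\hat\xi_d=-\overline{\hat\xi_{g-1-d}}^t$ forces $\alpha_g=\beta_g=0$ and $\gamma_g=-\overline{\beta_{-1}}$, so $\gamma\sim\gamma_g\lambda^g$ dominates $\alpha\sim\alpha_{g-1}\lambda^{g-1}$, while $\nu^2\sim\lambda^{2g-1}\overline{a(0)}$ gives $\nu\sim\sqrt{\overline{a(0)}}\,\lambda^{g-1/2}$; the formula $h=\gamma/(\nu+\alpha)$ produces $h\sim\gamma_g\lambda^{1/2}/\sqrt{\overline{a(0)}}$, and combining with $\beta_{-1}\gamma_0=a(0)$ and $\gamma_g=-\overline{\beta_{-1}}$ collapses $\bar\gamma_0 h$ to $-\tfrac{\mi}{4}\lambda^{1/2}e^{-\mi\Theta/2}+O(1)$, proving (2).

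The main technical care will be the bookkeeping of signs and branches: the two choices $\pm\sqrt{a(0)}$ correspond to the two sheets of $\Sigma$ above $\lambda=0$, and the branch of $\sqrt{\overline{a(0)}}$ chosen above $\lambda=\infty$ must be compatible (via $\eta$ or equivalently via analytic continuation of $\nu$ along a single sheet) with the first one, so that the single-valued scalar $f$ produces the formulas with the stated signs rather than their negatives.
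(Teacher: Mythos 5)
Your proposal is correct and follows essentially the same route as the paper: define $f$ via collinearity of $F_\l^{-1}(z)\psi(\l,0)$ with the eigenvector $\psi(\l,z)$ of $\zeta_\l(z)$, differentiate to get $f^{-1}df=-(\a(\zeta_\l)\psi)_1$, and read off the leading terms from the expansions of $\nu$, $\beta$, $\gamma$ at $\l=0$ and $\l=\infty$ using $\beta_{-1}\gamma_0=a(0)=-\tfrac{1}{16}e^{\mi\Theta}$ and the reality condition. The only cosmetic difference is that at $\l=\infty$ the paper renormalizes the eigenvector to $(\beta/(\nu-\a),\,1)$ and reads the second entry, whereas you keep $(1,\,h)$ throughout; this changes $f$ only by a factor whose logarithmic derivative is $O(1)$, so both give the stated asymptotics.
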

\begin{proof}
Note that $h(\l,\,z)$ and $f(\l,\,z)$ are 2-valued in $\lambda$ because they depend on the choice of eigenvalue. By Proposition \ref{th:eigenvalues} an eigenvector of
$$
    \zeta _\l (z) = \begin{pmatrix} \a (\lambda,\,z) & \beta (\lambda,\,z) \cr \gamma (\lambda,\,z) & -\a (\lambda,\,z) \end{pmatrix}
$$ associate to the eigenvalue $\nu =\sqrt {a(\l) \l^{-1}}$ is given by $\psi(\lambda,\,z) =\left(1,\,(\nu - \a)/\beta\right)$. Now using $\zeta_\l (z) = F^{-1}_\l (z) \,\xi_\l\,F_\l (z)$ we see that $F^{-1}  _\l (z) \,\psi(\lambda,\,0)$ is an eigenvector of $\zeta _\l (z)$ and it is collinear to $\psi (\lambda,\,z)$. This defines the function $f(\lambda,\,z)$.

Differentiating \eqref{eq:definitionf} reads $df\, \psi + f \, d \psi  = - \a(\zeta_\l) F_\l^{-1} \left.\psi\right|_{z=0}$, and then
\begin{equation} \label{eq:fvect}
f^{-1} d f\, \psi  = - \a (\zeta_\l) \,\psi  -d \psi \,.
\end{equation}
In a neighborhood of $\l =0$, we have $\nu ^2 = - \det \zeta_\l = \frac{\l \a^2 +  \l \beta \gamma}{\l}=\frac{\beta_{-1} \gamma_0}{\l} +O(1)=\frac{-e^{\mi\Theta}}{16 \l } + O(1)$. Expanding at $\l =0$
$$
\a(\zeta_\l) = \bigl( \begin{smallmatrix} \a_0 & \beta_{-1} \l ^{-1} \\ \gamma_0 & - \a_0 \end{smallmatrix} \bigr) dz + O(1)\,,
$$
and considering the first entry of the vector equation (\ref{eq:fvect}) yields
\begin{equation*} \begin{split}
    f^{-1} df &= -\a_0 dz - \beta_{-1} \l ^{-1} \frac{\nu - \a (z)}{\beta (z)} dz +O(1)\\
    &= -\nu dz  +O(1)=\frac{-\mi e^{\mi\Theta/2}}{4\sqrt{\l}}dz+O(1)\,.
\end{split}
\end{equation*}
In a neighborhood of $\l = \infty$, we have
$$
\a (\zeta_\l)= \bigl( \begin{smallmatrix} - \bar \a_0 & - \bar \gamma_0 \\ - \bar \beta_{-1} \l  &  \bar \a_0 \end{smallmatrix} \bigr) d \bar z + O(1)
$$
and we obtain (ii) by considering the eigenvector $\psi = \left(\tfrac{\beta}{\nu -\a },\,1 \right)$ and looking at the second entry
gives
\begin{equation*} \begin{split}
    f^{-1} df &=  \bar \beta_{-1} \l \frac{\beta}{\nu -\a} d \bar z + O(1) = \frac{\bar \beta _{-1} \beta_{g-1}}{\sqrt{-a_{2g}}}\sqrt{\l} d \bar z + O(1)\\
    &= \frac{ -\mi e^{-\mi\Theta/2}\sqrt{\l}}{4} d \bar z +O(1)\,.
\end{split}
\end{equation*}
Further, $\hat \xi _d =-\bar{ \hat \xi} ^t_{g-1-d}$ implies $\beta_{g-1}=-\bar \gamma_0$ and $a_{2g} = -\beta_{g-1}\gamma_g =- \bar \gamma_0 \bar\beta_{-1}= e^{-\mi\Theta}/16$.
\end{proof}
Using these properties we compute the local behavior of $\mu(\lambda,\,\tau)$ near $\l=0$ and $\l=\infty$.
\begin{proposition}
Let $X: A \to \SY^2 \times \R$ be an immersed finite type minimal cylinder with spectral curve $\Sigma$. Then there exists a meromorphic differential $d\ln\mu$ on $\Sigma$ with second order poles without residues at $\lambda=0,\,\infty$ so that $d \ln \mu -\tfrac{1}{4}\mi\tau \exp(\mi\Theta/2)\,d\sqrt{\l}\,^{-1}$ extends holomorphically to $\l=0$, and $d\ln\mu -  \tfrac{1}{4} \mi\bar\tau\exp(-\mi\Theta/2)\,d\sqrt{\l}$ extends holomorphically to $\l = \infty$.

This differential is the logarithmic derivative of a function $\mu:\Sigma \to \C$ which transforms under the involutions \eqref{eq:involutions} as $\sigma^\ast\mu=\mu^{-1}$, $\varrho^\ast\mu=\bar{\mu}^{-1}$ and $\eta^\ast\mu=\bar{\mu}^{}$. Further $\mu =\pm 1$ at each branch point of $\Sigma$.
\end{proposition}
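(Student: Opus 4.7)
The plan is to define $\mu$ pointwise on $\Sigma$ as the eigenvalue of $M_\lambda(\tau)$ on the joint eigenvector of $\xi_\lambda$ and $M_\lambda(\tau)$ provided by Proposition~\ref{th:eigenvalues}, then identify $\mu^{-1}$ with the function $f(\lambda,\tau)$ of Lemma~\ref{fdf}, and finally read off the asymptotics, the three symmetries, and the branch-point values. Away from branch points and from zeroes of $\beta$, set $\mu(P)$ to be the eigenvalue of $M_\lambda(\tau)$ on $\psi_+=(1,(\nu-\alpha)/\beta)$ at $P=(\lambda,\nu)$; since $[M_\lambda(\tau),\xi_\lambda]=0$ and the $\nu$-eigenspace of $\xi_\lambda$ is one-dimensional and depends holomorphically on $P$, this gives a holomorphic function on its natural domain, and patching by the alternative normalization $(\gamma/(\nu+\alpha),1)$ extends $\mu$ to a holomorphic function on $\Sigma\setminus\{\lambda=0,\infty\}\setminus\{\nu=0\}$.

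The key identification is $f(\lambda,\tau)=\mu^{-1}$: periodicity of $\omega$ gives $\zeta_\lambda(\tau)=\zeta_\lambda(0)=\xi_\lambda$, so $\psi(\lambda,\tau)=\psi(\lambda,0)$, and using $F_\lambda(\tau)=M_\lambda(\tau)$ in \eqref{eq:definitionf} yields
\begin{equation*}
    f(\lambda,\tau)\,\psi(\lambda,0)=F_\lambda(\tau)^{-1}\psi(\lambda,0)=M_\lambda(\tau)^{-1}\psi(\lambda,0)=\mu^{-1}\psi(\lambda,0).
\end{equation*}
Integrating the asymptotic formulas of Lemma~\ref{fdf} along the straight path $z=s\tau$, $s\in[0,1]$, and using $f(\lambda,0)=1$, yields
\begin{equation*}
    \ln\mu=\tfrac{\mi}{4}\tau\,e^{\mi\Theta/2}\lambda^{-1/2}+\Order(1)\mbox{ near }\lambda=0,
\end{equation*}
and the analogous expansion with $\bar\tau\,e^{-\mi\Theta/2}\lambda^{1/2}$ near $\lambda=\infty$. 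Taking exterior derivatives on $\Sigma$ then produces the claimed meromorphic structure of $d\ln\mu$, with second-order poles in the local uniformizers $\sqrt{\lambda}$ and $\sqrt{\lambda}^{-1}$ at the two punctures and vanishing residues there.

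For the involutions, $\sigma$ exchanges $\psi_+\leftrightarrow\psi_-$ and hence $\mu\leftrightarrow\mu^{-1}$, giving $\sigma^\ast\mu=\mu^{-1}$. The argument in the proof of Proposition~\ref{th:eigenvalues} shows $M_{1/\bar\lambda}(\tau)\,\tilde\psi_-=\bar\mu\,\tilde\psi_-$ with $\tilde\psi_-$ the eigenvector at $\eta(P)$, giving $\eta^\ast\mu=\bar\mu$; composing, $\varrho^\ast\mu=\sigma^\ast\eta^\ast\mu=\bar\mu^{-1}$. At a simple branch point $\lambda=\alpha_i$ the eigenvalue $\nu$ vanishes and $\xi_{\alpha_i}\in\Sl(\C)$ is nonzero nilpotent; its centralizer in $\SL(\C)$ is $\{\pm I+b\,\xi_{\alpha_i}:b\in\C\}$, so $[M_{\alpha_i}(\tau),\xi_{\alpha_i}]=0$ forces $M_{\alpha_i}(\tau)=\pm I+b\,\xi_{\alpha_i}$, whose unique eigenvalue is $\pm 1$, and $\mu$ extends continuously to $\pm 1$ at each branch point.

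The main obstacle is the local analysis at the punctures and the branch points: one must verify that the asymptotics of $\ln\mu$ as a multi-valued object are compatible with the two-sheeted structure of $\Sigma$ (this is automatic from $\sigma^\ast\mu=\mu^{-1}$), and that the centralizer argument at branch points is matched by a genuine holomorphic extension of $\mu$ in the local uniformizer $\sqrt{\lambda-\alpha_i}$, so that $d\ln\mu$ carries no spurious singularities away from $\lambda=0,\infty$.
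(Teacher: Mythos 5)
Your proof is correct and follows essentially the same route as the paper: $\mu$ is defined as the monodromy eigenvalue on the joint eigenvector of $\xi_\l$ and $M_\l(\tau)$, the pole structure at $\l=0,\infty$ comes from the identification $\mu=f^{-1}(\l,\tau)$ together with the asymptotics of Lemma~\ref{fdf}, and the symmetries come from Proposition~\ref{th:eigenvalues} and $\det M_\l=1$. Your centralizer argument at the branch points (the commutant of a nonzero nilpotent in $\SL(\C)$ being $\{\pm\un+b\,\xi_{\alpha_i}\}$) is a slightly cleaner packaging of the paper's explicit basis computation showing $\mu=\mu^{-1}$ there, but the content is the same.
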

\begin{proof}
If $\nu \neq 0$, the eigenspace of $\zeta_\l$ is a complex 1-dimensional vector space and since $[M_\l,\, \zeta_\l]=0$, every eigenvector $\psi$ of $\zeta_\l$ with associated eigenvalue $\nu \neq 0$ is an eigenvector of $M_\l$ with eigenvalue $\mu$ which depends only on $(\nu,\, \l)$. If $\nu=0$ then $\mu=\pm 1$. Note that $\mu$ is a non-zero holomorphic function on $\Sigma^*$. At $\l=0$ and $\l = \infty$, the monodromy has essential singularities and we thus need the local analysis of Lemma \ref{fdf}. If $\tau$ is the period of the annulus, we have $\psi (\tau)=\psi (0)$
and $f (\tau) \psi (0)=F_\l ^{-1} \psi (0)$. Then $\mu = f^{-1} (\tau)$. This proves that at $\l =0$, $d \ln \mu -\frac{\mi}{4}\, \exp(\mi\Theta/2)\, \tau \,d\sqrt {\l}^{-1}$ is holomorphic, and at $\l =\infty$ the differential
$d \ln \mu - \tfrac{\mi}{4}\,\exp(-\mi\Theta/2)\,\bar \tau \,d \sqrt{\l}$ extends holomorphically. The differential $d \ln \mu$ has second order poles without residues at $\l=0$ and at $\l=\infty$.

If $\psi_+$ and $\psi _-$ are eigenvectors associated to eigenvalues $\pm \nu$ of $\zeta_\l$,
the corresponding eigenvalues $\mu_\pm$ of $M_\l$ satisfy $\mu_+\mu_-=1$. To see how the involution $\eta$ acts on $\mu$, we note that since $M_\l$ satisfies \eqref{eq:reality-F}, we have that $\bar \mu$ is the corresponding eigenvalue of $M_{1/\bar \l}$ associated to $\mu$ by Proposition \ref{th:eigenvalues}. Thus $\eta^\ast\mu=\bar{\mu}^{}$.
Similarly $\varrho^* \mu = \bar \mu^{-1}$.

If $\nu =0$, then $\det \zeta_\l=0$, so let $(\tilde e_1,\,\tilde e_2) \in \C^2$ such that $\zeta_\l (\tilde e_1)=0$ and $\zeta_\l (\tilde e_2)=\delta \tilde e_1$. Since $M_\l \in \SL (\C)$, let $(e_1,\,e_2)$ be a basis of eigenvectors of $M_\l$ associate to $\mu$ and $\mu^{-1}$ and $\zeta_\l (e_1) =\kappa_1 \delta \tilde e_1$, $\zeta_\l (e_2) = \kappa_2 \delta \tilde e_1$. Inserting this in $M_\l \zeta_\l (e_i)=\zeta_\l M_\l(e_1)$ proves that $\mu = \mu ^{-1}$. This proves that the holomorphic function $\mu$ takes values $\pm 1$ at each branch point of $\Sigma$.
\end{proof}
Next we relate the eigenvalues $\mu$ to the isospectral action. We prove in particular that the existence of such a holomorphic function is a sufficient condition to close the period of a polynomial Killing field with any initial potential.
\begin{proposition}
The stabilizer $\Gamma_{\xi_\l}$ in \eqref{eq:stabilizer} depends only on the orbit of $\xi_\l$.
If $\gamma \in \Gamma_{\xi_\l}$ satisfies $F_\l (\gamma)=\pm \un$ for some $\xi_\l \in {\mathcal{P}}_g$ then the same is true for every element in the orbit of $\xi_\l$. The period $\tau$ is related to $t=(\tau,\,0\,,...,\,0) \in \Gamma_{\xi_\l}$.
\end{proposition}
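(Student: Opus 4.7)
The plan is to dispatch the three assertions in turn, using only Definition~\ref{groupaction} and uniqueness of the Iwasawa factorization.

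The first assertion follows from the commutativity of $\pi$ established in the proposition after Definition~\ref{groupaction}: for $\tilde{\xi}_\l = \pi(s)\xi_\l$ with $s\in\R^g$ and $\gamma \in \Gamma_{\xi_\l}$, I would compute
\[
\pi(\gamma)\tilde{\xi}_\l = \pi(\gamma)\pi(s)\xi_\l = \pi(s)\pi(\gamma)\xi_\l = \pi(s)\xi_\l = \tilde{\xi}_\l,
\]
so $\Gamma_{\xi_\l} \subseteq \Gamma_{\tilde{\xi}_\l}$, with equality by the same argument applied to $\xi_\l = \pi(-s)\tilde{\xi}_\l$.

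For the second assertion I would compare the two Iwasawa factorizations defining $F_\l(\gamma)$ at $\xi_\l$ and at $\tilde{\xi}_\l = B_\l(s)\xi_\l B_\l(s)^{-1}$. Conjugating $\exp(\xi_\l\sum\l^{-i}\gamma_i)=F_\l(\gamma)B_\l(\gamma)$ by $B_\l(s)$ and using $F_\l(\gamma)=\epsilon\un$ with $\epsilon\in\{\pm 1\}$ yields
\[
\exp\bigl(\tilde{\xi}_\l\sum\l^{-i}\gamma_i\bigr)=\epsilon\,B_\l(s)B_\l(\gamma)B_\l(s)^{-1}.
\]
The element $B_\l(s)B_\l(\gamma)B_\l(s)^{-1}$ is a product of three elements of $\Lambda_r^+\SL(\C)$; inspection at $\l=0$ shows it is upper triangular with diagonal $(\rho_\gamma,1/\rho_\gamma)$, $\rho_\gamma\in\R^+$, inherited from $B_\l(\gamma)$, so it again lies in $\Lambda_r^+\SL(\C)$. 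When $\epsilon=-1$ one writes $\epsilon$ times this element as $(-\un)\cdot(B_\l(s)B_\l(\gamma)B_\l(s)^{-1})$, and $-\un\in\Lambda_r\SU$. Uniqueness of the Iwasawa factorization then forces $\tilde{F}_\l(\gamma)=\epsilon\un=F_\l(\gamma)$. I expect the main technical point of the proposition to live here, in the careful verification that this conjugation respects the positivity normalization of $\Lambda_r^+\SL(\C)$ at $\l=0$ so that uniqueness of Iwasawa applies.

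For the third assertion I would specialize to $t=(\tau,0,\ldots,0)\in\R^g$, so that \eqref{eq:isospectral action} reads $\exp(\tau\xi_\l)=F_\l(t)B_\l(t)$. By the Symes method (the proposition following \eqref{eq:alpha-zeta}), the extended frame of the polynomial Killing field $\zeta_\l$ with $\zeta_\l(0)=\xi_\l$ arises from exactly this Iwasawa factorization $\exp(z\xi_\l)=F_\l(z)B_\l(z)$ evaluated at $z=\tau$. Consequently $F_\l(t)=F_\l(\tau)$ and $\pi(t)\xi_\l=F_\l(\tau)^{-1}\xi_\l F_\l(\tau)=\zeta_\l(\tau)$, so $(\tau,0,\ldots,0)\in\Gamma_{\xi_\l}$ is equivalent to $\zeta_\l(\tau)=\zeta_\l(0)$, i.e.\ to the $\tau$-periodicity of the polynomial Killing field (and thus of $\alpha(\zeta_\l)$), which is the sought-after relation between the period $\tau$ and the stabilizer.
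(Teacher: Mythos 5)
Your proof is correct and rests on the same key observation as the paper's, namely the commutativity $\pi(\gamma)\pi(s)\,\xi_\l=\pi(s)\pi(\gamma)\,\xi_\l$ of the isospectral action. The paper's own proof is just that one line; your additional verifications — that conjugating the factorization by $B_\l(s)$ preserves the $\Lambda_r^+\SL(\C)$ normalization at $\l=0$ so that Iwasawa uniqueness forces $\tilde F_\l(\gamma)=\pm\un$, and that $\pi\bigl((\tau,0,\ldots,0)\bigr)\xi_\l=\zeta_\l(\tau)$ via the Symes factorization so that membership in the stabilizer is exactly $\tau$-periodicity of the polynomial Killing field — are sound and make explicit what the paper leaves implicit.
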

\begin{proof}
This follows from the commuting property of the isospectral action, since $$\pi(\gamma) \pi(t) \,\xi_\l =\pi(t)\pi(\gamma)\,\xi_\l=\pi(t)\,\xi_\l \,.$$
\end{proof}
\begin{proposition}
\label{periodique}
Assume $\xi_\l \in {\mathcal{P}}_g$ has no roots. Then $\gamma \in \Gamma_{\xi_\l}$ if and only if there exists on $\Sigma$ a function $\mu$ which satisfies the following properties:
\begin{enumerate}
\item $\mu$ is holomorphic on $\Sigma^*$ and there exist holomorphic functions $f, g$ on $\C^\times$ with $\mu =f\,\nu +g$.
\item $\sigma ^* \mu = \mu ^{-1},\,\varrho ^* \mu = \bar \mu ^{-1},\, \eta ^* \mu =\bar \mu ^{}$ and $\mu=\pm 1$ at branch points of $\Sigma$.
\item $d \ln \mu$ is a meromorphic 1-form with $d \ln \mu -d(\sum_{i=0}^{g-1} \gamma_i \l^{-i}\nu)$ holomorphic in a neighborhood
of $\l=0$ and $d \ln \mu + d(\sum_{i=0}^{g-1} \bar \gamma_i \l^{i+1-g}\nu)$ is holomorphic at $\l=\infty$.
\end{enumerate}
\end{proposition}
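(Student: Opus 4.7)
The plan is to prove the equivalence by diagonalizing both sides of the Iwasawa identity
\[
	\exp\Bigl(\xi_\l\sum_{i=0}^{g-1}\gamma_i\l^{-i}\Bigr)=F_\l(\gamma)\,B_\l(\gamma)
\]
of Definition~\ref{groupaction} on the eigenvectors $\psi_\pm$ of $\xi_\l$ supplied by Proposition~\ref{th:eigenvalues}. The forward direction is essentially a repackaging of this identity, while the converse requires inverting it.

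$(\Rightarrow)$ If $\pi(\gamma)\,\xi_\l=\xi_\l$, then $B_\l(\gamma)$ commutes with $\xi_\l$, and hence so does $F_\l(\gamma)$. I would define $\mu$ as the eigenvalue of $F_\l(\gamma)$ on $\psi_+$. Because $F_\l(\gamma)\in\Lambda_r\SU$ has determinant $1$, the hyperelliptic involution gives $\sigma^*\mu=\mu^{-1}$, and the reality $F_{1/\bar\l}(\gamma)=\overline{F_\l(\gamma)}^{-t}$ together with Proposition~\ref{th:eigenvalues} produces $\eta^*\mu=\bar\mu$ and $\varrho^*\mu=\bar\mu^{-1}$; at a branch point $\psi_+=\psi_-$, forcing $\mu=\mu^{-1}=\pm1$. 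This proves (2). Since $\mu$ satisfies the quadratic $\mu^2-\tr F_\l(\gamma)\mu+1=0$ with coefficients holomorphic on $\C^\times$, writing $\mu=f\nu+g$ via $\nu^2=\l^{-1}a(\l)$ gives (1). For (3), apply both sides of the Iwasawa identity to $\psi_+$: using $\xi_\l\psi_+=\nu\psi_+$ and that $B_\l(\gamma)\psi_+=c\,\psi_+$ for a scalar $c$ (because $B_\l(\gamma)$ preserves eigenspaces), one obtains $\mu\,c=\exp(\nu\sum_{i=0}^{g-1}\gamma_i\l^{-i})$. Since $B_\l(\gamma)\in\Lambda^+_r\SL(\C)$ is holomorphic and invertible at $\l=0$, $d\ln c$ is holomorphic there, so
\[
	d\ln\mu - d\Bigl(\nu\sum_{i=0}^{g-1}\gamma_i\l^{-i}\Bigr)=-d\ln c
\]
is holomorphic at $\l=0$; the analogous statement at $\l=\infty$ follows by applying $\eta$ and using (2).

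$(\Leftarrow)$ Given $\mu$ satisfying (1)--(3), the plan is to assemble from $\mu$ a candidate unitary factor and check that it realizes the Iwasawa decomposition of $\exp(\xi_\l\sum\gamma_i\l^{-i})$. Using (1) and $\xi_\l^2=\nu^2\un$, the Cayley--Hamilton-style formula
\[
	M_\l \;=\; \tfrac{\mu+\mu^{-1}}{2}\un \;+\; \tfrac{\mu-\mu^{-1}}{2\nu}\,\xi_\l
\]
defines a holomorphic $\SL_2(\C)$-valued map on $\C^\times$: the condition $\mu=\pm 1$ at branch points in (2) guarantees $(\mu-\mu^{-1})/\nu$ extends across those points, and the reality relations in (2) place $M_\l$ into $\Lambda_r\SU$. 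By construction $M_\l$ commutes with $\xi_\l$ and has eigenvalues $\mu^{\pm 1}$ on $\psi_\pm$. The key step is then to verify $B_\l:=M_\l^{-1}\exp(\xi_\l\sum\gamma_i\l^{-i})\in\Lambda^+_r\SL(\C)$. On $\psi_\pm$ this product has eigenvalues $\mu^{\mp1}\exp(\pm\nu\sum\gamma_i\l^{-i})$, and property (3) says exactly that the logarithmic differentials of these eigenvalues are holomorphic at $\l=0$, so each eigenvalue extends to a holomorphic, non-vanishing function there. The normalization of $B_\l(0)$ as upper-triangular with positive real diagonal is then forced by the residue form of $\xi_\l$ at $\l=0$ (see Definition~\ref{def pot}), and the behavior at $\l=\infty$ is automatic from the involution $\eta$. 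Uniqueness of the Iwasawa decomposition identifies $M_\l=F_\l(\gamma)$ and the holomorphic factor with $B_\l(\gamma)$, and since $B_\l(\gamma)$ commutes with $\xi_\l$ we conclude $\pi(\gamma)\,\xi_\l=\xi_\l$.

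The main obstacle will be the converse: while the spectral data of the candidate Iwasawa factors is immediate, promoting it to the required global regularity and normalization demands that property (3) act as the precise cohomological condition cancelling the essential singularities of $\exp(\nu\sum\gamma_i\l^{-i})$ against those of $\mu$, and that property (2) supply the right behavior at the branch points of $\Sigma$ where the eigendecomposition degenerates. Once these two regularity issues are resolved, uniqueness of Iwasawa forces the identification with $F_\l(\gamma),B_\l(\gamma)$ and the statement follows.
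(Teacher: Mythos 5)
Your proposal is correct and follows essentially the same route as the paper: in the forward direction you read off $\mu$ as the eigenvalue of $F_\l(\gamma)$, and in the converse you reconstruct the unitary factor as $M_\l=\tfrac{\mu+\mu^{-1}}{2}\un+\tfrac{\mu-\mu^{-1}}{2\nu}\,\xi_\l=g\,\un+f\,\xi_\l$, check it lies in $\Lambda_r\SU$ via (2), use (3) to make $B_\l=M_\l^{-1}\exp(\sum\gamma_i\l^{-i}\xi_\l)$ holomorphic and correctly normalized at $\l=0$, and invoke uniqueness of the Iwasawa decomposition --- exactly the paper's argument.

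The one place where your justification is too quick is property (1) in the forward direction. Arguing from the characteristic equation $\mu^2-\tr F_\l(\gamma)\,\mu+1=0$ only shows $\mu$ is an algebraic function of $\l$; it does not yield that $f=(\mu-\sigma^*\mu)/(2\nu)$ is holomorphic on $\C^\times$ when $a$ has higher-order roots (the hypothesis is that $\xi_\l$ has no roots, not that $a$ has simple roots --- cf.\ Remark~\ref{muholo}, which stresses that $\mu=f\nu+g$ with $f,g$ holomorphic is strictly stronger than holomorphy of $\mu$ on $\Sigma^*$). The paper gets this from the commutant identity: since $[F_\l(\gamma),\xi_\l]=0$ one has $F_\l(\gamma)-\tfrac12\tr(F_\l(\gamma))\un=f(\l)\,\xi_\l$, and because $\xi_\l$ never vanishes, $f$ is a quotient of a holomorphic entry by a non-vanishing holomorphic entry, hence holomorphic. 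This is precisely where the hypothesis that $\xi_\l$ has no roots enters, and your write-up should route the argument through it.
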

\begin{proof}
For $\gamma \in \R^g$, we write $\exp (\sum_{i=0}^{g-1} \gamma_i \l^{-i} \xi_\l) =F_\l (\gamma) B_\l (\gamma)$ for the Iwasawa decomposition. Then $\gamma \in \Gamma_{\xi_\l}$ if and only if $[F_\l (\gamma),\,\xi_\l]=[B_\l (\gamma),\, \xi_\l] = 0$. Hence $F_\l (\gamma)$ and $B_\l (\gamma)$ act trivially on $\xi_\l$ and $\xi_\l= \zeta_\l (0)=\zeta_\l (\gamma) =F_\l^{-1}(\gamma)\,\xi_\l \, F_\l (\gamma)=B_\l (\gamma) \,\xi_\l\, B_\l^{-1}(\gamma)$. Since $F_\l (\gamma)$ and $B_\l (\gamma)$ commute with $\xi_\l$,
we have by \eqref{eq:crochet} that $F_\l (\gamma)=f(\l)\,\xi_\l + \tfrac{1}{2} {\rm tr}(F_\l (\gamma))\,\un$ and
$B_\l (\gamma) = e(\l)\, \xi_\l + \tfrac{1}{2} {\rm tr}(B_\l (\gamma))\,\un$ with functions $e\,,f$ depending only on $\l$. In this case we define on the spectral curve $\Sigma$ the eigenvalue function $\mu(\l)$ of $F_\l (\gamma)$ by
$$
\mu (\l)=f(\l)\,\nu + g(\l) = f(\l)\,\nu +\tfrac{1}{2} {\rm tr}(F_\l (\gamma)).
$$
We prove that $\mu$ satisfies properties (1),\,(2) and (3). Since $\l \mapsto F _\l (\gamma)$ is holomorphic, the function $\l \mapsto {\rm tr}(F_\l (\gamma))$ is holomorphic on $\Sigma^*$. Since $F_\l (\gamma)$ and $\xi_\l$ commute, can write $f \,\xi_\l = F_\l (\gamma) - \tfrac{1}{2} {\rm tr}(F_\l (\gamma))\un)$, and since $\xi_\l$ has no zeroes conclude that $\l \mapsto f(\l)$ is holomorphic. Hence $\mu: \tilde\Sigma \to \C$ is holomorphic and $f,\,g$ have no poles on $\C^\times$, proving (1).

Property (2) follows since $\mu$ is the eigenvalue of $F_\l (\gamma)$ and $F_\l$ satisfies \eqref{eq:reality-F}.

At $\l=0$, the eigenvalue of the matrix $B_\l (\gamma)=e(\l) \xi_\l + \frac12 {\rm trace}(B_\l (\gamma)) \un$ is holomorphic, so $e(0)=0$ ($\xi_\l$ has a pole at $\l=0$). Hence $B_0 (\gamma)=\un$. Since
$$
\exp (\sum_{i=0}^{g-1} \gamma_i \l^{-i} \nu)
$$
is the product of eigenvalues of the product $F_\l (\gamma)B_\l (\gamma)$, then at $\l=0$, this is precisely the value of $\mu$, which proves (3). Similarly, using $\varrho^* d \ln \mu = \overline{ d \ln \mu}$ allows to deal with the point $\l= \infty$.

Conversely, assume $\mu : \Sigma \to \C$ satisfies conditions (1),(2) and (3). Assume there are two holomorphic function $f,\, g : \C^\times \to \C$ with $\mu=f\nu+g$. Hence $F =f \,\xi_\l +g \,\un$ is holomorphic on $\Sigma^*$ and belongs to the first factor of the Iwasawa decomposition (a consequence of (2)). Due to (3), the
matrix
$$
    B_\l=F_\l^{-1} \exp (\sum_{i=0}^{g-1} \gamma_i \l^{-i} \xi_\l)
$$
is holomorphic in a neighborhood of $\l=0$ and $[B_\l ,\,\xi_\l]=0$. Since $B_\l - \tfrac{1}{2}{\rm tr} (B_\l ) \un$ is at $\l=0$ proportional to $\bigl(\begin{smallmatrix} 0 & 1 \\ 0 & 0 \end{smallmatrix} \bigr)$, $B_\l$ belongs to the second factor of the Iwasawa decomposition.
This shows that $F_\l B_\l$ is the Iwasawa decomposition of
$$
    \exp (\sum_{i=0}^{g-1} \gamma_i \l^{-i} \xi_\l)=F_\l B_\l.
$$
Since $[F_\l (\gamma),\, \xi_\l ]=0$, we conclude that $\gamma \in\Gamma_{\xi_\l}$.
\end{proof}
\begin{remark} \label{muholo}
Holomorphic functions $f,\,g :\C^\times \to \C$ are given by
$$
    \mu = \tfrac{1}{2} \bigl( \tfrac{\mu-\sigma^*\mu}{\nu} \bigr)\,\nu +  \tfrac{1}{2} \left(\mu+\sigma^* \mu \right) =
    f\,\nu + g.
$$
Theorem 8.2 in Forster \cite{Forster-RS} assures that $f,\,g$ extend to holomorphic functions on $\C^\times$. At fixed points of the involution $\sigma$ (zeroes of $\nu$), the function $\mu-\sigma^* \mu$ has zeroes.

In case $\nu$ has higher order roots, the function $(\mu-\sigma^*\mu)/\nu$ may have a pole. Then the condition that $\mu=f \nu +g$ with $f,\,g$ holomorphic is stronger than $\mu$ being holomorphic on $\Sigma^*$.

The 1-form  $d  \ln \mu $  is meromorphic on $\Sigma$, and changes sign under the hyperelliptic involution.
\end{remark}
\subsection*{The closing conditions for regular spectral curves.} In the following we restrict to the case where $a$ has only simple roots. Then the spectral curve $\Sigma$ is a hyperelliptic curve without singularities. The closing condition is simplest as we only need to check the existence of the holomorphic $\nu$ by Remark \ref{muholo}.
\begin{proposition} \label{perioddiff}
Let $a\in\C^{2g}[\l]$ satisfy \eqref{eq:real} and \eqref{eq:hermit}. Then on $\Sigma$ there exist for all $\tau \in \C^\times$ a unique meromorphic differential $\phi$ such that
\begin{enumerate}
\item $\sigma^* \phi = -\phi$, $\varrho^* = - \bar \phi$, $\eta ^*\phi = \bar \phi$.
\item $\phi$ has second order poles at $0$ and $\infty$ without residues, and no other poles.
\item $\phi=\frac{1}{4} \mi\,\tau e^{\mi\Theta/2} d \sqrt{\l}^{-1} + O(1)$ at $\l =0$ and $\phi=\frac{1}{4} \mi\,\bar \tau e^{-\mi\Theta/2} d \sqrt{\l} + O(1)$ at $\l = \infty$.
\item $\int_{\alpha_i}^{1/ \bar \a_i } \phi =   {\rm Re}\left( \int_{\alpha_i}^{1/\bar \a_i}\phi \right) =0$ for all roots $\a_i$ of $a$ where the integral is computed
along the line segment $[\a_i,\,1/ \bar \alpha_i]$.
\end{enumerate}
In conclusion there exists a unique $b\in\C^{g+1}[\l]$ with $\phi = \frac{b\,d\l}{\nu\,\l^2}$ which satisfies $\l ^{g+1} \bar  b (\bar \l ^{-1})=- b (\l)$.
\end{proposition}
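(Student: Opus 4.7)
The plan is to reduce the construction of $\phi$ to finding a normalized meromorphic differential of the second kind on $\Sigma$ with prescribed principal parts and vanishing $a$-periods, then invoke the classical period isomorphism for holomorphic differentials on a compact Riemann surface.

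First I would check that any meromorphic differential on $\Sigma$ satisfying (1) and (2) has the form $\phi = b(\l)\,d\l/(\nu\l^2)$ for some $b\in\C^{g+1}[\l]$. The condition $\sigma^*\phi = -\phi$ means $\nu\,\phi$ pushes forward to a rational $1$-form on $\CPone$, and the pole prescriptions at the branch points $P_0, P_\infty$, computed in the local parameters $w=\sqrt{\l}$ and $w=1/\sqrt{\l}$, force $\nu\phi/d\l$ to equal $b(\l)/\l^2$ with $\deg b\leq g+1$. The absence of residues at $P_0, P_\infty$ is automatic since $\sigma$-antiinvariance allows only even powers of $w$ in the local expansion. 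Next, substituting $\eta(\l,\nu) = (\bar\l^{-1}, \bar\l^{1-g}\bar\nu)$ into this ansatz translates $\eta^*\phi = \bar\phi$ directly into the polynomial reality condition $\l^{g+1}\bar b(\bar\l^{-1}) = -b(\l)$; the condition $\varrho^*\phi = -\bar\phi$ then follows via $\sigma = \varrho\circ\eta$. A short Laurent computation at $P_0$ and $P_\infty$ shows that (3) amounts to prescribing $b(0)$ explicitly in terms of $\tau,\,\Theta,\,a(0)$, and the reality condition on $b$ makes the prescription at $P_\infty$ automatically consistent with the one at $P_0$.

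After imposing (1), (2), (3) and the reality of $b$, the space of candidates is a real affine space of dimension $g$, modeled on the real form of the $g$-complex-dimensional space of $\sigma$-antiinvariant holomorphic differentials $p(\l)\,d\l/\nu$ with $\deg p \leq g-1$. The integrals in (4) are real-valued because the segment $[\alpha_i,\,1/\bar\alpha_i]$ is exchanged with its reverse by the antiholomorphic involution $\varrho$ under which $\varrho^*\phi = -\bar\phi$; they provide $g$ real-linear functionals on the $g$-dimensional real space of candidates, one per pair $\{\alpha_i,\,1/\bar\alpha_i\}$ of roots of $a$ (different roots in the same pair yield the same functional up to sign).

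The crux is to show these $g$ functionals are linearly independent. Each segment from $\alpha_i$ to $1/\bar\alpha_i$, together with its image under the hyperelliptic involution $\sigma$, closes up to an $a$-cycle $A_i \subset \Sigma$; $\sigma$-antiinvariance of $\phi$ then yields $\oint_{A_i}\phi = 2\int_{\alpha_i}^{1/\bar\alpha_i}\phi$, so the functionals in (4) are precisely the $a$-periods along a basis $A_1,\ldots,A_g$ of a Lagrangian sublattice of $H_1(\Sigma,\Z)$. The classical $a$-period map $H^0(K_\Sigma)\to\C^g$ is a complex isomorphism, so its restriction to the real form cut out by the symmetries of $\sigma$ and $\eta$ is an $\R$-isomorphism onto $\R^g$. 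This furnishes both existence and uniqueness of $\phi$, and the polynomial form of $b\in\C^{g+1}[\l]$ together with the stated reality condition is then immediate. The main obstacle is verifying that the reality structure induced by $\varrho$ and $\eta$ on $H^0(K_\Sigma)$ interacts correctly with the chosen basis of $a$-cycles so that the complex period isomorphism restricts to an isomorphism over $\R$; in the degenerate cases where $a$ has roots on $\bbS^1$ or repeated roots, a separate reduction to the smooth case along the lines of Section~4 is needed.
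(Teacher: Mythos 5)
Your proposal is correct and follows essentially the same route as the paper: the Ansatz $\phi=b\,d\l/(\nu\l^2)$, fixing the top and bottom coefficients of $b$ by (1)--(3), identifying the remaining $g$ real parameters with $\sigma$-antiinvariant holomorphic differentials in the real form, observing via $\varrho$ that the segment integrals in (4) are automatically real, and concluding by injectivity of the period map on $H^0(K_\Sigma)$ (the paper phrases this last step directly through Riemann's bilinear relations, which is also what underlies the classical $a$-period isomorphism you invoke, so the "main obstacle" you flag is exactly the routine check the paper carries out with the identity $\int_{\alpha_i}^{\alpha_i/|\alpha_i|}\phi=\overline{\int_{\alpha_i/|\alpha_i|}^{1/\bar\alpha_i}\phi}$).
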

\begin{proof}
We make the Ansatz that $\phi = \frac{b \,d\lambda}{\nu\,\lambda^2}$.
The conditions (1), (2) and (3) fix the highest and lowest coefficient of $b$, so there
remain $g$ real free coefficients of $b$. These coefficients correspond to holomorphic differentials satisfying condition (4) ${\rm Re}( \int_{\alpha_i}^{1/\bar \a_i}\phi )=0$. The first equality comes from the reality condition (1) and reads
$$
    \int_{\alpha_i}^{\alpha_i/ | \a_i |} \phi  =
    \overline{\int^{1/ \bar \alpha_i}_{\alpha_i/ | \a_i |} \phi }\,.
$$
Concerning uniqueness, holomorphic differentials whose integrals along cycles
are imaginary, are zero by Riemann's bilinear relations.
\end{proof}
\begin{definition} \label{sigma-hat}
We define a compact Riemann surface with boundary by $\hat\Sigma = \Sigma \setminus \cup \gamma_i$ where $\gamma_i$ are closed cycles over the straight lines connecting the branch points $\alpha_i$ and $1/ \bar \alpha_i$.
\end{definition}
\begin{corollary} \label{periodbonn}
Let $a\in\C^{2g}[\l]$ satisfy \eqref{eq:real} and \eqref{eq:hermit}.
\begin{enumerate}
\item If there is $\tau \in \C$ and $b$ which satisfies {\rm (1)-(4)} of Proposition \ref{perioddiff}, then there exists a unique meromorphic function $h(\l):\hat\Sigma \to \C$ such that $\sigma ^* h(\l)= -h(\l)$ and $dh=  \tfrac{b \,d\lambda}{\nu\, \lambda^2}$.
\item $I(a)$ corresponds to minimal annuli in $\SY ^2 \times \R$ if and only if there exists $\tau \in \C^\times$ with $\tau e^{\mi\Theta/2} \in \R^\times$, and such that the polynomial $b$ defining the function $h(\l):\hat\Sigma \to \C$, satisfies $\sigma ^* h(\l)= -h(\l)$  and  $dh=  \tfrac{b \,d\lambda}{\nu \lambda^2}$. This function continuously extends to boundary segments connecting $\a_i$ and $1/ \bar \alpha_i$ and then takes values on $\mi\pi \Z$ at all roots of $(\l -1)\,a(\l)$.
    \end{enumerate}
\end{corollary}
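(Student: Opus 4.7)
My approach is to identify the boundary-value conditions on $h$ with the structural condition $\mu = \pm 1$ at the finite branch points of $\Sigma$ and at $\lambda = 1$, where $\mu$ is the function appearing in Proposition \ref{periodique}. Part (1) amounts to checking that $\phi = b\,d\lambda/(\nu\lambda^2)$ is exact on $\hat\Sigma$; Part (2) then translates the closing condition for an associated minimal annulus into boundary conditions on $h$.

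\textbf{Part (1).} Cutting the genus-$g$ curve $\Sigma$ along the $g$ non-separating cycles $\gamma_i$ yields a genus-zero surface $\hat\Sigma$ with $2g$ boundary circles, whose first homology is generated by loops parallel to the boundary; each such loop represents one of the $\gamma_i$ in $\Sigma$. Using $\sigma^* \phi = -\phi$ and the fact that $\gamma_i$ double-covers $[\alpha_i, 1/\bar\alpha_i]$, the period reduces to $\int_{\gamma_i}\phi = 2\int_{\alpha_i}^{1/\bar\alpha_i}\phi$, which vanishes by condition (4) of Proposition \ref{perioddiff}. Since $\phi$ has no residues at the punctures, it is exact on $\hat\Sigma$. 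Any two antiderivatives differ by a constant, and from $d(h + \sigma^*h) = \phi + \sigma^*\phi = 0$ the combination $h + \sigma^* h$ is constant; shifting $h$ by half this constant selects the unique $h$ with $\sigma^* h = -h$.

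\textbf{Part (2).} The vertical component $\mathrm{Re}(-\mi e^{\mi\Theta/2}z)$ is $\tau$-periodic iff $\tau e^{\mi\Theta/2}\in\R^\times$. For the horizontal component, I apply Proposition \ref{periodique} to $(\tau, 0, \ldots, 0)$: closing is equivalent to the existence of a function $\mu$ on $\Sigma$ with $\sigma^*\mu = \mu^{-1}$, holomorphic on $\Sigma^*$, whose logarithmic derivative has the prescribed principal parts at $\lambda = 0, \infty$, and with $\mu = \pm 1$ at the finite branch points of $\Sigma$ and at the two points over $\lambda = 1$ (the latter coming from $M_1(\tau)\in\mathbb{T}$ for $\xi_1$ generic). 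Given such $\mu$, set $b$ by $d\ln\mu = b\,d\lambda/(\nu\lambda^2)$ and $h = \ln\mu$; the values $\mu = \pm 1$ at branch points and over $\lambda = 1$ translate to $h \in \mi\pi\Z$ at all roots of $(\lambda-1)a(\lambda)$. Conversely, given the stated hypotheses, I define $\mu := \exp(h)$ on $\hat\Sigma$ and argue it extends to a single-valued holomorphic function on $\Sigma^*$. The boundary-value jump of $h$ across each cut $\gamma_i$ is the constant $\int_{\beta_i}\phi$ for the dual cycle $\beta_i$, and $\sigma$-antiinvariance forces the two boundary limits at the branch-point endpoints to sum to zero; combined with $h(\alpha_i)\in\mi\pi\Z$ this pushes $\int_{\beta_i}\phi$ into $2\pi\mi\Z$, so $\exp(h)$ is continuous across $\gamma_i$ and $\mu$ is single-valued on $\Sigma^*$. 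The hypotheses of Proposition \ref{periodique} then hold, supplying $\tau\in\Gamma_{\xi_\lambda}$; combined with $\mu = \pm 1$ at $\lambda = 1$ (horizontal closing) and $\tau e^{\mi\Theta/2}\in\R^\times$ (vertical closing), this yields a closed minimal annulus.

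\textbf{Main obstacle.} The delicate step is the gluing in the backward direction: identifying the boundary-value jump of $h$ across each cut with the dual period $\int_{\beta_i}\phi$, and recognizing that $\sigma$-antiinvariance together with the branch-point quantization $h(\alpha_i)\in\mi\pi\Z$ forces this dual period into $2\pi\mi\Z$, so that $\exp(h)$ extends to a single-valued function on $\Sigma^*$. Once this correspondence is in place, the remaining bookkeeping matching $h$ with $\ln\mu$ and the two closing conditions is routine.
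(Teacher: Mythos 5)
Your proof is correct and follows essentially the same route as the paper: part (1) via the vanishing of the $\gamma_i$-periods together with $\sigma$-antiinvariance to fix the constant, and part (2) by passing between $h$, the single-valued function $\mu=e^{h}$, and the monodromy eigenvalue through Proposition \ref{periodique}, with $\mu=\pm1$ at the branch points and at $\l=1$ encoding the closing conditions. Your explicit cut-and-jump argument for single-valuedness of $e^{h}$ (the jump across $\gamma_i$ equals $2h(\alpha_i)\in 2\pi\mi\Z$ by $\sigma$-antiinvariance) merely spells out what the paper leaves implicit when it asserts that $e^{h}$ is globally holomorphic and identifies $d\ln\mu$ with $\phi$ by uniqueness.
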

\begin{proof}
(1) In a small neighborhood of $\Sigma$ over $\l=0$, the function $h$ is uniquely determined by $dh =  \frac{b \,d\lambda}{\nu\,\lambda^2}$ up to some additive constant. This constant is determined by $\sigma ^* h(\l)= -h(\l)$ in this small neighborhood. By conditions on $b$ we have $\int_{\gamma_i} dh =0$, so
$$
    \int_{\gamma_i}  \frac{b \,d\lambda}{\nu \,\lambda^2}=
    2 \int _{\alpha_i}^{1/ \bar \alpha_i} \frac{b\,d\lambda}{\nu\, \lambda^2}=0\,.
$$
Now we can uniquely extend the function $h$ to $\hat\Sigma$.

(2) For an immersed annulus $$X_1 (z)=(F_1 (z)\,\sigma_3 \, F^{-1}_1(z),\,{\rm Re}\,(-\mi e^{\mi\Theta/2}z))$$ in $\SY ^2 \times \R$ the extended frame $F_\l (z)$ admits a period $\tau\in\C^\times$ with $\tau e^{\mi\Theta/2} \in \R$, and periodic Killing field $\zeta_\l (z+\tau)=\zeta_\l (z)$. This implies that  $d \ln \mu$ is a meromorphic differential with second order pole at $0$ and $\infty$ without residues and no other poles and satisfies condition (1)-(3) of Proposition \ref{perioddiff}. The integrals of $d \ln \mu $ along closed cycles are integer multiples of $2\pi\mi$, since the function $\mu$ is globally single-valued by condition (4). Since the extended frame $F_1 (z)$ is periodic then $\mu (1)=\pm 1$. Since $\mu^2 =1$ at branch points $\alpha_i$, it is equivalent to the condition $\ln \mu (\a_i) \in \mi\pi \Z$. Hence there is a polynomial $b$ associated to $\tau \in \C$ such that $d \ln \mu =\frac{b\,d\l}{\nu \l ^2}$.

Conversely, consider $(a,\,b)$ such that on the spectral curve $\Sigma$ the meromorphic
differential $\phi =\frac{b \,d\l}{\nu \,\l ^2} $ satisfies $\int_1^{\alpha_i} \phi  \in \pi\mi \Z$ and $\int_{\alpha_i}^{\alpha_j} \phi  \in  \pi\mi\Z$. While $h(\l)=\int_1^\l \phi$ is a multiple-valued function on $\Sigma$, the function $e^h: \Sigma\to\C$ is again holomorphic. It is described in Proposition~\ref{periodique}
where $\tau$ is given by the residue of $\phi$ at $0$ and $\infty$. Then $\zeta_\l (z+ \tau) = \zeta_\l (z)$, and we can integrate the extended frame $F_\l$. There remains to prove that $F_1(\tau) = \pm \un$. We remark that a solution of the characteristic equation of a solution of $F^{-1}_\l dF_\l = \a(\zeta_\l)$ defines a meromorphic differential $d \ln \mu$ which satisfies (1)--(3) of Proposition \ref{perioddiff}.
By uniqueness of such differentials, $d \ln \mu =\phi$ and $\mu (1)= e^{h(1)} = \pm 1$ so that $F_1$ is $\tau$-periodic.
\end{proof}
\begin{definition}
\label{spectraldata}
The spectral data of a minimal cylinder of finite type in $\SY^2 \times \R$ with sym point at $\l=1$ is a pair $(a,\,b)\in \C^{2g}[\l] \times \C^{g+1}[\l]$ such that
\begin{enumerate}
\item[(i)] $\l ^{2g}  \overline{ a (\bar \l ^{-1})}=a(\l) $, $\l^{-g} a (\l) \leq 0$ for all $\l \in S^1$ and $a(0)=-\frac{1}{16}e^{\mi\Theta}$,
\item[(ii)] $\l ^{g+1} \overline{ b (\bar \l ^{-1})} =- b (\l)$,
\item[(iii)]$b(0) =\tfrac{\,\tau  e^{\mi\Theta}}{32} \in e^{\mi\Theta/2}\R$ (closing condition of the third coordinate).
\item[(iv)] ${\rm Re}\left( \int_{\alpha_i}^{1/\bar \a_i}\frac{b \,d\l}{\nu \l^2} \right) =0$ for all roots $\a_i$ of $a$ where the integral is computed along the straight segment $[\a_i, 1/ \bar \alpha_i]$.
 \item[(v)] The unique function $h:\tilde  \Sigma \to \C$, where $\tilde \Sigma= \Sigma \backslash \cup \gamma_i$ and $\gamma_i$ are closed cycles over the straight lines connecting $\alpha_i$ and $1/ \bar \alpha_i$, satisfies
$$
    \sigma ^* h(\l)= -h(\l) \quad \hbox{ and } \quad dh=  \frac{b\,\, d\lambda}{\nu \lambda^2}\,.
$$
This function continuously extends to boundary segments connecting $\a_i$ and $1/ \bar \alpha_i$ and then
takes values on $\mi\pi \Z$ at all roots of $(\l -1)\,a(\l)$.
 \item[(vi)] When $a$ has higher order roots then $e^h = f\,\nu +g$ for holomorphic $f,\, g:\C^\times \to \C$ with $f(1)=0$.
\end{enumerate}
\end{definition}
%


\section{Bubbletons}

The term 'bubbleton' is due to Sterling-Wente \cite{SteW:bub}. They are the solitons of the theory, and finite type solutions of the sinh-Gordon equations with bubbletons have singular spectral curves. For more details on the relationship between bubbletons, Bianchi-B\"acklund transformations, simple factors and {\sc{cmc}} surfaces we refer to \cite{KilSS, Kil:bub, Kob:bub} and the references therein. By Proposition~\ref{th:pkf-reduce} we can discard roots of a potential $\xi_\l$ without changing the surface, but it remains to discuss higher order roots of $\det \xi_\l$ where $\xi_\l$ is not semi-simple.

We now consider potentials $\xi_\l$ for which the polynomial $a(\l) = -\l \det \xi_\l$ has higher order roots. Roots of $\xi_\l$ come in symmetric pairs $\alpha_0,\,1/\bar{\alpha}_0 \in \C^\times$ and we set in this section $\delta=-\bar \alpha_0 / |\alpha_0|$. Because the polynomial $\l \mapsto a(\l)$ is homogeneous of degree 2, such roots generate even roots of order at least 2 in the polynomial $a$. Then there exists a polynomial $\tilde a$ with
$$
    a(\l)=(\l - \alpha_0)^2 (1-\bar{\alpha}_0 \l)^2 \tilde{a}(\l).
$$
We study such $\xi_\l$ and show that they can be factored into a product of {\emph{simple factors}} (see below) and a potential $\tilde\xi_\l \in I(\tilde a)$. If $\xi_{\alpha_0}\neq 0$ but $\mathrm{ord}_{\a_0}\det \xi_\l \geq 2$, then the matrix $\xi_{\alpha_0}$ is nilpotent and defines a complex line
$$
    L = \ker \xi_{\alpha_0} = {\rm im}\; \xi_{\alpha_0}  \in \C\P^1.
$$
Let $v_1 \in \C^2$ be a unit vector which spans $L$, and complement $v_1$ to an orthonormal basis  $(v_1,\,v_2)$ of $\C^2$. Set
$$
    p(\l)=(\l - \alpha_0)(1-\bar \alpha _0 \l)
$$
and let $Q_L \in \SU$ with $Q_L ( e_1)=v_1$. The entries of $\xi_\l$ in the basis $(v_1,\,v_2)$ are given by
\begin{equation} \label{eq:Q-xi-Q}
    Q_L^{-1}\xi (\l)\,Q_L : = \begin{pmatrix} \tilde \alpha (\l)p(\l) &\tilde  \beta(\l)(1-\bar \alpha_0 \l)^2
    \\ \tilde \gamma (\l)(\l - \alpha_0)^2  & - \tilde \alpha (\l)p(\l)\end{pmatrix}
\end{equation}
where $\l\tilde \a(\l)^2 - \l\tilde \beta(\l) \tilde \gamma (\l) = \tilde a (\l)$. If $(u_1,u_2) \in \ker \xi_{\alpha_0}$ then $(-\bar u_2,\, \bar u_1) \in \ker \xi_{1/\bar \alpha_0}$.

\subsection*{Simple factors.} Define  for $\l \neq \alpha_0, \bar \alpha _0 ^{-1}$ the $\mathrm{SL}_2 (\C)$-valued maps
$$
    \pi _{\alpha_0} (\lambda)  :=  \begin{pmatrix}
    \sqrt{\frac{\lambda-\alpha_0}{1-\bar{\alpha}_0\,\lambda}} & 0 \\
    0 & \sqrt{\frac{1-\bar{\alpha}_0\,\lambda}{\lambda-\alpha_0}}
        \end{pmatrix} \,, \quad \mbox{and} \quad \pi_L := Q_L \pi_{\alpha_0}  Q_L^{-1}\,.
$$
At $\l=0$, we apply the $QR$-decomposition to get
$$\pi_L(0)=Q_L \pi_{\alpha_0} (0) Q_L^{-1}=Q_{0,L} R_{0,L}=R_{1,L}Q_{1,L} $$
where $Q_{0,L},Q_{1,L}  \in \SU$ and $R_{0,L}, R_{1,L}$ are upper triangular matrices of the form $\bigl( \begin{smallmatrix} \rho & r \cr 0 & \rho^{-1} \end{smallmatrix} \bigr)$ for
$\rho \in \R^+$ both depending on $\alpha_0$ and $L$.


\begin{definition} Let $L \in \C\P^1$ and $\alpha_0 \in \C$ with $r<\min\{ |\alpha_0|,\,1/|\a_0| \}$. A {\bf  left  simple factor} is a  map
$$
    g_{L,\alpha_0}(\l)=  \pi^{-1}_{L} (\l)Q_{0,L}\,.
$$
Then $H^{\ell}_{\alpha_0}=\{ g_{L,\alpha_0} \mid L \in \C\P^1\}  \cong \C\P^1$, and $H^{\ell}_{\alpha_0} \subset \Lambda^+_r \SL (\C)$.

A {\bf right simple factor} is a map
$$
    h_{L,\alpha_0}(\l)=        Q_{1,L} \pi^{-1}_{L} (\l)\,.
$$
Then $H^{r}_{\alpha_0}=\{ h_{L,\alpha_0} \mid L \in \C\P^1\} \cong \C\P^1$, and $H^{r}_{\alpha_0} \subset \Lambda^+_r \SL (\C)$.
\end{definition}
\begin{lemma}
There is a one-to-one correspondence between $H^{\ell}_{\alpha_0}$ and $H^{r}_{\alpha_0}$ via
\begin{equation} \label{eq:duality}
    g_{L,\alpha_0}  = \pi^{-1}_{L} Q_{0,L}= Q_{0,L} \pi_{L'}^{-1} =h_{L',\alpha_0}
\end{equation}
where
\[
L'=Q_{0,L}^{-1} L \qquad \hbox{ and } \qquad Q_{1,L'}=Q_{0,L}\,.
\]
Conversely, if $L'$ is given then we can define a QR-decomposition  $\pi_{L'}=R_{1,L'}Q_{1,L'}$ with
$$L=Q_{1,L'} L'.$$
\end{lemma}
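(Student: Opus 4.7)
The plan is to verify the central identity $\pi_L^{-1}(\l)\,Q_{0,L} = Q_{0,L}\,\pi_{L'}^{-1}(\l)$ by unpacking the conjugation structure of $\pi_L$, and then check that the auxiliary relation $Q_{1,L'} = Q_{0,L}$ is forced by the uniqueness of the $QR$ and $RQ$ factorizations. The converse direction will just reverse these steps.

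Concretely, I would first note that since $\pi_L = Q_L\,\pi_{\alpha_0}\,Q_L^{-1}$, conjugating by $Q_{0,L}$ gives
\[
Q_{0,L}^{-1}\,\pi_L^{-1}(\l)\,Q_{0,L} \;=\; (Q_{0,L}^{-1}Q_L)\,\pi_{\alpha_0}^{-1}(\l)\,(Q_{0,L}^{-1}Q_L)^{-1}.
\]
Now $Q_L\,e_1 = v_1$ spans $L$, so $(Q_{0,L}^{-1}Q_L)\,e_1$ spans $Q_{0,L}^{-1}L = L'$. Hence I may take $Q_{L'} = Q_{0,L}^{-1}Q_L \in \SU$ as the unitary frame for $L'$, and the right-hand side above is by definition $\pi_{L'}^{-1}(\l)$. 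This proves $\pi_L^{-1}Q_{0,L} = Q_{0,L}\,\pi_{L'}^{-1}$, which is the middle equality of \eqref{eq:duality}.

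Next I would verify the claim $Q_{1,L'} = Q_{0,L}$. Evaluating the identity above at $\l=0$ yields $\pi_{L'}(0) = Q_{0,L}^{-1}\,\pi_L(0)\,Q_{0,L} = Q_{0,L}^{-1}(Q_{0,L}R_{0,L})Q_{0,L} = R_{0,L}\,Q_{0,L}$. This is already an $RQ$ factorization of $\pi_{L'}(0)$ with an upper-triangular factor of the required form $\bigl(\begin{smallmatrix}\rho & r\\ 0 & \rho^{-1}\end{smallmatrix}\bigr)$, $\rho\in\R^+$. Since such a factorization is unique, $R_{1,L'} = R_{0,L}$ and $Q_{1,L'} = Q_{0,L}$, and therefore $Q_{0,L}\,\pi_{L'}^{-1} = Q_{1,L'}\,\pi_{L'}^{-1} = h_{L',\alpha_0}$ by definition.

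For the converse, starting from $L' \in \C\P^1$ I would define $Q_{1,L'}$ from the $RQ$ decomposition $\pi_{L'}(0) = R_{1,L'}Q_{1,L'}$, set $L = Q_{1,L'}L'$, and take $Q_L = Q_{1,L'}Q_{L'}$ as the corresponding unitary frame for $L$. Then
\[
\pi_L(0) \;=\; Q_L\,\pi_{\alpha_0}(0)\,Q_L^{-1} \;=\; Q_{1,L'}\bigl(Q_{L'}\pi_{\alpha_0}(0)Q_{L'}^{-1}\bigr)Q_{1,L'}^{-1} \;=\; Q_{1,L'}\,R_{1,L'},
\]
so by uniqueness of the $QR$ decomposition $Q_{0,L} = Q_{1,L'}$, and the map $L \mapsto L' = Q_{0,L}^{-1}L$ is inverted by $L' \mapsto L = Q_{1,L'}L'$. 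The only subtle point in all of this is the well-definedness of $Q_L$ (and $Q_{L'}$) as an element of $\SU$: $v_1$ is determined only up to a unit complex scalar, and this $U(1)$ ambiguity must cancel in $\pi_L = Q_L\pi_{\alpha_0}Q_L^{-1}$, which it does because $\pi_{\alpha_0}$ is diagonal and thus commutes with $\mathrm{diag}(e^{i\theta},e^{-i\theta})$. Once this is noted, every equality above is unambiguous.
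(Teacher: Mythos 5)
Your proof is correct and follows essentially the same route as the paper: the key identity $\pi_{L'}=Q_{0,L}^{-1}\pi_L Q_{0,L}$ via the equivariance $Q_{AL}=AQ_L$, evaluation at $\l=0$ to read off $\pi_{L'}(0)=R_{0,L}Q_{0,L}$, and uniqueness of the $QR$/$RQ$ factorizations to conclude $Q_{1,L'}=Q_{0,L}$. Your remark on the $U(1)$ ambiguity in the choice of $Q_L$ is a worthwhile clarification that the paper leaves implicit.
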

\begin{proof}
For any $A \in \SU$ and $L'=AL$ we have $Q_{AL}=AQ_{L}$. Hence $\pi_{L'}=Q_{AL} \pi_{\alpha_0} Q^{-1}_{AL}=A \pi_L A^{-1}$.
Now we define $L'=Q_{0,L}^{-1}L$ and we have $g^{-1}_{L,\alpha_0} = Q_{0,L}^{-1} \pi_{L}=\pi_{L'} Q_{0,L}^{-1}$. With $\pi _L(0) = Q_{0,L}R_{0,L}$, we observe that $$\pi_{L'}(0)=Q_{0,L}^{-1} \pi_L (0) Q_{0,L}=R_{0,L}Q_{0,L}=R_{1,L'}Q_{1,L'}$$
and hence $Q_{1,L'}=Q_{0,L}$. Then
$g^{-1}_{L,\alpha_0} (\l)=\pi_{L'} Q_{0,L}^{-1}=\pi_{L'} Q_{1,L'}^{-1}=h^{-1}_{L',\alpha_0}(\l).$
If $L'=Q_{0,L}^{-1}L$, so that $L=Q_{0,L}L'=Q_{1,L'}L'$.
\end{proof}
Now we consider the matrix $\breve \xi _\l= p^{-1}(\l)\,g_{L,\alpha_0}^{-1} \,\xi_\l \,g_{L,\alpha_0}.$
\begin{proposition}
\label{decomposition}
Let $\xi_\l \in I(a)$ with $\xi_{\alpha_0} \neq 0$ and $\mathrm{ord}_{\a_0} \det \xi_\l \geq 2$. Let $\delta= -\bar \alpha_0/| \alpha_0 |$. Then $\xi_\l$ uniquely factorizes as
$$
    \xi_\l= p(\l)\,g_{L,\alpha_0} \,\breve \xi_\l \,g^{-1}_{L,\alpha_0}
$$
with $L= \mathrm{ker}\, \xi_{\alpha_0}$ and $\breve \xi_\l \in \pk (\delta)$ with $\tilde \xi_\l=g(\delta)^{-1} \breve \xi _\l g (\delta) \in I( \tilde a)$ (i.e. $\breve \xi_\l \in I_{\delta} (\tilde a)$).
\end{proposition}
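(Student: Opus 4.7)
The plan is to carry out the factorization by explicit computation in the orthonormal basis adapted to $L=\ker\xi_{\alpha_0}$, and then to check that the quotient $\breve\xi_\l$ lies in the correct normalized isospectral set. First I justify the structural form \eqref{eq:Q-xi-Q}. Since $\det\xi_{\alpha_0}=-\alpha_0^{-1}a(\alpha_0)=0$ and $\xi_{\alpha_0}\neq 0$, the matrix $\xi_{\alpha_0}$ is a nonzero nilpotent, so $L=\ker\xi_{\alpha_0}=\mathrm{im}\,\xi_{\alpha_0}$ is well defined. Choosing a unit $v_1$ spanning $L$ and completing to an orthonormal basis produces $Q_L\in\SU$ with $Q_L^{-1}\xi_{\alpha_0}Q_L$ strictly upper-triangular, so the first column of $Q_L^{-1}\xi_\l Q_L$ is divisible by $\l-\alpha_0$. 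The hypothesis $\mathrm{ord}_{\alpha_0}\det\xi_\l\geq 2$ together with the non-vanishing (1,2)-entry at $\alpha_0$ then forces the (2,1)-entry to vanish to order at least two there, and the reality condition \eqref{realcoeff} promotes these divisibilities symmetrically to $1/\bar\alpha_0$.

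Next I carry out the conjugation by $g_{L,\alpha_0}=\pi_L^{-1}Q_{0,L}$. Using $\pi_L=Q_L\pi_{\alpha_0}Q_L^{-1}$ gives
$$g_{L,\alpha_0}^{-1}\xi_\l g_{L,\alpha_0}=Q_{0,L}^{-1}Q_L\bigl(\pi_{\alpha_0}(Q_L^{-1}\xi_\l Q_L)\pi_{\alpha_0}^{-1}\bigr)Q_L^{-1}Q_{0,L}.$$
The diagonal $\pi_{\alpha_0}=\mathrm{diag}(s,s^{-1})$ with $s^2=(\l-\alpha_0)/(1-\bar\alpha_0\l)$ scales the (1,2)-entry of \eqref{eq:Q-xi-Q} by $s^2$ and the (2,1)-entry by $s^{-2}$. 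Combined with the divisibilities from the previous step, every entry of the inner bracket acquires the common scalar factor $p(\l)=(\l-\alpha_0)(1-\bar\alpha_0\l)$, so
$$\breve\xi_\l := p(\l)^{-1}g_{L,\alpha_0}^{-1}\xi_\l g_{L,\alpha_0}$$
is a well-defined Laurent polynomial. Because $\det g_{L,\alpha_0}=1$, we obtain $-\l\det\breve\xi_\l=p(\l)^{-2}a(\l)=\tilde a(\l)$.

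It remains to verify $\breve\xi_\l\in I_\delta(\tilde a)$. Since $\xi_\l$ has Laurent range $[-1,g]$ and we divide by a scalar polynomial of degree two, the quotient has range $[-1,g-2]$, matching the spectral genus $g-2$ of $\tilde a$. The reality condition for $\pk(\delta)$ is checked from $\overline{p(1/\bar\l)}=\l^{-2}p(\l)$ together with the behaviour of $\pi_{\alpha_0}$ and the $\SU$-elements $Q_L,Q_{0,L}$ under $\l\mapsto 1/\bar\l$, combined with \eqref{realcoeff} for $\xi_\l$. For the residue, $\hat{\breve\xi}_{-1}$ is strictly upper-triangular and equals $p(0)^{-1}$ times the residue of $\xi_\l$ up to conjugation by the unitary $Q_L^{-1}Q_{0,L}$; since $p(0)=-\alpha_0$ contributes the unimodular phase $-\bar\alpha_0/|\alpha_0|=\delta$, one obtains $\hat{\breve\xi}_{-1}\in\mi\delta\R^+\bigl(\begin{smallmatrix}0&1\\0&0\end{smallmatrix}\bigr)$, hence $\tilde\xi_\l=g(\delta)^{-1}\breve\xi_\l g(\delta)\in I(\tilde a)$. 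Uniqueness is immediate: $L=\ker\xi_{\alpha_0}$ is intrinsic to $\xi_\l$, so any factorization of the stated form must use this $L$, and then $\breve\xi_\l$ is determined.

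The main obstacle will be the residue and reality bookkeeping: tracking how the $QR$-normalization $\pi_L(0)=Q_{0,L}R_{0,L}$ and the action of $\pi_{\alpha_0}^{-1}$ at $\l=0$ conspire to produce precisely the unimodular factor $\delta=-\bar\alpha_0/|\alpha_0|$ in the residue, and how the twisted behaviour of $g_{L,\alpha_0}$ under $\l\mapsto 1/\bar\l$ interacts with \eqref{realcoeff}. The purely algebraic production of the common factor $p(\l)$ is routine, but pinning down the precise phase $\delta$ requires a careful identification of the induced rotation of the residue.
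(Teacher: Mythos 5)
Your proposal follows essentially the same route as the paper: pass to the orthonormal basis adapted to $L=\ker\xi_{\alpha_0}$, establish the divisibility pattern \eqref{eq:Q-xi-Q}, observe that conjugation by $\pi_{\alpha_0}$ turns every entry into $p(\l)$ times a Laurent polynomial, divide by $p$, and then check degree, reality and the residue normalization. You in fact justify the form \eqref{eq:Q-xi-Q} (via nilpotency of $\xi_{\alpha_0}$, the order of vanishing of $\det\xi_\l$, and the reality condition \eqref{realcoeff}), which the paper simply asserts.

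One correction on the residue step: your sentence that $\hat{\breve\xi}_{-1}$ equals $p(0)^{-1}$ times $\hat\xi_{-1}$ \emph{up to conjugation by the unitary} $Q_L^{-1}Q_{0,L}$ is not right, and if it were literally true the conclusion would fail, since a unitary conjugate of $\mi\R^+\bigl(\begin{smallmatrix}0&1\\0&0\end{smallmatrix}\bigr)$ is generically not of that form. The correct computation is that $g_{L,\alpha_0}(0)=\pi_L^{-1}(0)Q_{0,L}=R_{0,L}^{-1}$ by the $QR$-normalization, so
\begin{equation*}
\hat{\breve\xi}_{-1}=p(0)^{-1}\,R_{0,L}\,\hat\xi_{-1}\,R_{0,L}^{-1}\,,
\end{equation*}
and it is precisely because $R_{0,L}$ is upper triangular with positive real diagonal $\rho,\rho^{-1}$ that $R_{0,L}\bigl(\begin{smallmatrix}0&1\\0&0\end{smallmatrix}\bigr)R_{0,L}^{-1}=\rho^2\bigl(\begin{smallmatrix}0&1\\0&0\end{smallmatrix}\bigr)$ stays on the same ray; the phase $\delta$ then comes from $p(0)^{-1}=-1/\alpha_0=\delta/|\alpha_0|$. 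Since you correctly flag the $QR$-identity $\pi_L(0)=Q_{0,L}R_{0,L}$ as the relevant tool, this is a repairable slip rather than a wrong approach, but as written that intermediate claim does not support the conclusion $\hat{\breve\xi}_{-1}\in\mi\delta\R^+\bigl(\begin{smallmatrix}0&1\\0&0\end{smallmatrix}\bigr)$.
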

\begin{proof}
The matrix $\xi_{\alpha_0}$ uniquely defines the line $L$. Consider $$\breve \xi _\l= p^{-1}(\l)\,g_{L,\alpha_0}^{-1} \xi_\l g_{L,\alpha_0}\,.$$ We need to prove that $\breve \xi _\l$ has no poles at $\alpha_0, \bar \alpha^{-1}_0$ and $\breve \xi _\l \in I_{\delta}(\tilde a)$. First define
$$
    Q_L^{-1} \xi_\l Q_L  : = \begin{pmatrix} \tilde \alpha (\l)p(\l) &\tilde  \beta(\l)(1-\bar \alpha_0 \l) ^2 \cr \tilde \gamma (\l)(\l - \alpha_0)^2 & - \tilde \alpha (\l)p(\l)\end{pmatrix}\,.
$$
The following matrix has neither pole nor zero at $\l= \alpha_0,\, \bar \alpha^{-1}_0$:
$$
    Q_L^{-1}Q_{0,L} \breve \xi_\l Q_{0,L} ^{-1}Q_L=\frac{\pi_{\a_0}}{\sqrt{p}} Q_L^{-1} \xi_\l Q_L\frac{\pi_{\a_0}^{-1}}{\sqrt{p}}=  \begin{pmatrix} \tilde \alpha (\l) &\tilde  \beta(\l)  \cr \tilde \gamma (\l) & - \tilde \alpha (\l)\end{pmatrix}\,.
$$
The residue of $\breve \xi_\l$ at $\l = 0$ is $R_{0,L} \delta \hat \xi _{-1} R^{-1}_{0,L}$. As $R_{0,L} \in \Lambda^+ \SL (\C)$, we conclude that the residue takes values in $\mi \delta \R^+\, \bigl( \begin{smallmatrix} 0 & 1 \\ 0 & 0 \end{smallmatrix} \bigr)$. Therefore $\breve \xi_\l \in \calP_{g-2} (\delta)$.
By remark \ref{IN2} on Isometric normalization we conclude that $\tilde \xi_\l=g(\delta)^{-1} \breve \xi _\l g (\delta) \in I( \tilde a)$.
\end{proof}
Using the relation \eqref{eq:duality} between left and right factors immediately yields
\begin{corollary}\label{th:decomp}
Let $\xi_\l \in I(a)$ with $\xi_{\alpha_0} \neq 0$ and $\mathrm{ord}_{\a_0} a(\l) \geq 2$. Then $\xi_\l$ uniquely factorizes as
$$
    \xi_\l= p(\l)\,h_{L',\alpha_0} \,\breve \xi_\l \,h^{-1}_{L',\alpha_0}
$$
with $(L',\breve \xi_\l ) \in \C\P^1 \times  I_{\delta}(\tilde a)$ where  $L'=Q_{0,L}^{-1} L$ and $L= \mathrm{ker}\, \xi_{\alpha_0}$.
\end{corollary}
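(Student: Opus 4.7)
The plan is to deduce Corollary~\ref{th:decomp} almost immediately from Proposition~\ref{decomposition} by translating the left simple factor decomposition into a right one via the duality identity \eqref{eq:duality}.

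First, I would apply Proposition~\ref{decomposition} to the given $\xi_\l$: since $\xi_{\alpha_0}\neq 0$ and $\mathrm{ord}_{\alpha_0} a(\l)\geq 2$, there is a unique factorization
\[
  \xi_\l \;=\; p(\l)\,g_{L,\alpha_0}\,\breve\xi_\l\,g^{-1}_{L,\alpha_0},
\]
with $L=\ker\xi_{\alpha_0}$ and $\breve\xi_\l\in I_{\delta}(\tilde a)$ for $\delta=-\bar\alpha_0/|\alpha_0|$. Then I would invoke \eqref{eq:duality}, which says $g_{L,\alpha_0}=h_{L',\alpha_0}$ with $L'=Q_{0,L}^{-1}L$. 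Substituting this identity on both sides of the factorization above gives
\[
  \xi_\l \;=\; p(\l)\,h_{L',\alpha_0}\,\breve\xi_\l\,h^{-1}_{L',\alpha_0},
\]
which is exactly the stated right simple factor decomposition. Since $L$ and $\breve\xi_\l$ were unique and $Q_{0,L}$ is determined by $L$, the pair $(L',\breve\xi_\l)\in\C\P^1\times I_\delta(\tilde a)$ is unique as well.

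The only thing to double-check for uniqueness from the \emph{right}-factor viewpoint is that the data $(L',\breve\xi_\l)$ can itself be recovered directly from $\xi_\l$, without passing through $L$. For this I would observe that given $L'$, the lemma preceding Corollary~\ref{th:decomp} tells us $L = Q_{1,L'}L'$ via the QR-decomposition $\pi_{L'}(0)=R_{1,L'}Q_{1,L'}$. Conversely, from $L=\ker\xi_{\alpha_0}$, which is intrinsic to $\xi_\l$, we get $L'=Q_{0,L}^{-1}L$, and then $\breve\xi_\l = p(\l)^{-1}\,h^{-1}_{L',\alpha_0}\,\xi_\l\,h_{L',\alpha_0}$ is forced. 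So the bijection $L\leftrightarrow L'$ guarantees that uniqueness of the left factorization passes to the right factorization.

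There is essentially no obstacle, since all the analytic content — the cancellation of the double poles at $\alpha_0$ and $1/\bar\alpha_0$, the upper-triangularity of the residue at $\l=0$ for $\breve\xi_\l\in\pk(\delta)$, and the isometric normalization taking $I_\delta(\tilde a)$ back to $I(\tilde a)$ — has already been carried out in Proposition~\ref{decomposition}. The corollary is in this sense a formal restatement of that proposition under the bijection $H^\ell_{\alpha_0}\cong H^r_{\alpha_0}$ established in \eqref{eq:duality}.
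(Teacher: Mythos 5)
Your proposal is correct and matches the paper's own argument: the corollary is stated there as an immediate consequence of Proposition~\ref{decomposition} via the duality $g_{L,\alpha_0}=h_{L',\alpha_0}$ with $L'=Q_{0,L}^{-1}L$ from \eqref{eq:duality}. Your extra remark that uniqueness transfers because $L\leftrightarrow L'$ is a bijection is a harmless (and reasonable) elaboration of what the paper leaves implicit.
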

\begin{remark}
\label{isomnorm}
The factorizations of Proposition~\ref{decomposition} and Corollary~\ref{th:decomp} give rise to pairs in $(L',\,\breve \xi_\l ) \in \C\P^1 \times I_{\delta}(\tilde a)$, and we say that we {\emph{decompose}} such $\xi_\l$ into $(L,\,\breve \xi_\l )$ or $(L',\,\breve \xi_\l )$ or
into $(\tilde L,\,\tilde \xi_\l )$ or $(\tilde L',\,\tilde \xi_\l )$ where $\tilde \xi_\l = g(\delta)^{-1} \breve \xi_\l \;g(\delta) \in I(\tilde a)$ and
$\tilde L = g(\delta)^{-1} L,\tilde L' = g(\delta)^{-1} L'$ depending on the context.
\end{remark}
A special situation occurs when $L'^{\perp}$ is an eigenline of $ \breve \xi_{\alpha_0}$.
\begin{proposition}
Suppose $\xi_\l$ decomposes into $(L',\breve \xi_\l )$ and $\breve \xi_{\alpha_0}\,L'^{\perp} = L'^{\perp}$. Then $\xi_\l$ has zeroes at $\l=\alpha_0,\,1/\bar {\alpha}_0$. Furthermore the singularity of the spectral curve is removable and up to a conformal change of coordinate the potentials $\xi_\l$ and $ \breve \xi_\l$ induce the same extended frame $\breve F_\l (p(0)z)=F_\l(z)$.
\end{proposition}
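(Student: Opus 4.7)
The plan is in three steps: show that $\xi_\l$ vanishes at $\alpha_0,\,1/\bar\alpha_0$, use these zeros to remove the nodal singularities from $\Sigma$, and invoke Proposition~\ref{th:pkf-reduce} to obtain the frame identity.

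\emph{Step 1 — zeros of $\xi_\l$.} I would work in the orthonormal basis $(v_1',v_2')$ adapted to $L'$, where $Q_{L'}e_1 = v_1'\in L'$, and set $\hat\xi_\l := Q_{L'}^{-1}\,\breve\xi_\l\,Q_{L'} = \bigl(\begin{smallmatrix}\hat\alpha & \hat\beta\\ \hat\gamma & -\hat\alpha\end{smallmatrix}\bigr)$. The hypothesis $\breve\xi_{\alpha_0}\,L'^\perp = L'^\perp$ translates into $\hat\xi_{\alpha_0}\,e_2\in\operatorname{span}(e_2)$, i.e.\ $\hat\beta(\alpha_0)=0$. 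A direct diagonal conjugation gives
\[
\pi_{\alpha_0}^{-1}(\l)\,\hat\xi_\l\,\pi_{\alpha_0}(\l) = \begin{pmatrix} \hat\alpha(\l) & \hat\beta(\l)\,\dfrac{1-\bar\alpha_0\l}{\l-\alpha_0} \\[6pt] \hat\gamma(\l)\,\dfrac{\l-\alpha_0}{1-\bar\alpha_0\l} & -\hat\alpha(\l) \end{pmatrix},
\]
whose would-be simple pole at $\alpha_0$ in the $(1,2)$-entry is cancelled by the zero of $\hat\beta$ there. The prefactor $p(\l)$ in $\xi_\l = p(\l)\,Q_{1,L'}Q_{L'}\,\pi_{\alpha_0}^{-1}\hat\xi_\l\pi_{\alpha_0}\,Q_{L'}^{-1}Q_{1,L'}^{-1}$ then forces $\xi_{\alpha_0}=0$, and the reality condition \eqref{realcoeff} evaluated at $\l=\alpha_0$ gives $\xi_{1/\bar\alpha_0}=0$ as well.

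\emph{Step 2 — removing the singularity.} The polynomial $a(\l) = -\l\det\xi_\l$ has double roots at $\alpha_0,\,1/\bar\alpha_0$, producing two nodes on $\Sigma$; since $\xi_\l$ itself now vanishes simply at these points, dividing by $p(\l)$ produces a potential with discriminant $\tilde a(\l) = a(\l)/p(\l)^2$, and the reduced spectral curve $\nu^2 = \l^{-1}\tilde a(\l)$ no longer carries these singularities.

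\emph{Step 3 — frame identity.} With $\delta = -\bar\alpha_0/|\alpha_0|$, Proposition~\ref{th:pkf-reduce} applies directly: the Killing field $\zeta_\l(z) = F_\l^{-1}\xi_\l F_\l$ inherits the zeros of $\xi_\l$ for every $z$, so $g(\delta)^{-1}\bigl(\zeta_\l/p(\l)\bigr)g(\delta)\in \mathcal{P}_{g-2}$ is a polynomial Killing field whose extended frame $\breve F_\l$ satisfies $\breve F_\l(p(0)z) = F_\l(z)$. It remains to identify this reduced potential with the $\breve\xi_\l$ of the decomposition: the regularity obtained in Step~1 upgrades the formula $\xi_\l/p(\l) = h_{L',\alpha_0}\,\breve\xi_\l\,h_{L',\alpha_0}^{-1}$ to a global equality of $\l$-polynomials, and after the $g(\delta)$-gauge of Remarks~\ref{IN2} and~\ref{isomnorm} both sides lie in $I(\tilde a)$. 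The main obstacle is precisely this final identification: tracking the $g(\delta)$-gauge and checking that the simple-factor dressing $h_{L',\alpha_0}$ does not disturb the extended frame beyond the conformal reparametrization $z\mapsto p(0)z$ already furnished by Proposition~\ref{th:pkf-reduce}, so that the frame identity $\breve F_\l(p(0)z)=F_\l(z)$ holds literally rather than merely up to a rigid isometry of $\SY^2\times\R$.
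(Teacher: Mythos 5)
Your proof is correct and follows essentially the same route as the paper's: both arguments show $\xi_{\alpha_0}=0$ by conjugating $\breve\xi_\l$ by the simple factor in a basis adapted to $L'$ and observing that the eigenline hypothesis kills the one matrix entry not already annihilated by the prefactor $p(\l)$, and both then conclude by citing Proposition~\ref{th:pkf-reduce}. The residual identification you flag in Step 3 --- that the reduced potential $\xi_\l/p(\l)=h_{L',\alpha_0}\,\breve\xi_\l\,h_{L',\alpha_0}^{-1}$ is a priori only $\Lambda^+$-conjugate to $\breve\xi_\l$ --- is likewise left implicit in the paper, whose proof stops at the application of Proposition~\ref{th:pkf-reduce}.
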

\begin{proof}
We prove that
$$\dim_{\C} \ker \xi_{\alpha_0}=\dim_{\C} \ker  p(\alpha_0) \pi_{L'}^{-1} (\alpha_0)  \breve \xi_{\alpha_0}  \pi_{L'} (\alpha_0)=2\,.$$
If $L'^{\perp}=<v_2>$ is an eigenline of $\breve \xi_{\alpha_0}$ then $ \pi_{L'}(\l) v_2 =\sqrt{\frac{1-\bar \alpha_0 \l}{\l -\alpha_0}} v_2$ and
$$\pi_{L'}^{-1} (\alpha_0)  \breve \xi_{\alpha_0}  \pi_{L'} (\alpha_0) v_2 = \mu v_2 \hbox{ and } \xi_{\alpha_0} v_2=0\,.$$
If $L'=<v_1>$, we have $ \pi_{L'}(\l) v_1=\sqrt{\tfrac{\l -\alpha_0}{1-\bar \alpha_0 \l}} v_1$ and
$$p(\alpha_0) \pi_{L'}^{-1} (\alpha_0)  \breve \xi_{\alpha_0}  \pi_{L'} (\alpha_0) v_1 =0\,.$$
Then $\xi_\l$ has a zero at $\l=\alpha_0$ and we can remove it without changing the extended frame by Proposition~\ref{th:pkf-reduce}.
\end{proof}
\subsection*{Terng-Uhlenbeck formula.} Let $\xi_\l \in \pk$ with $\xi_{\alpha_0} \neq 0\,,\,\mathrm{ord}_{\alpha_0} a(\l) \geq 2$ for some $\alpha_0 \in \C^\times \setminus \bbS^1$. Suppose $\xi_\l$ decomposes into $(L',\,\breve\xi_\l)$, and let $0 < r < \min\{ |\alpha_0,\,1/|\alpha_0| \}$. Now consider the unitary factor $F_\l: \R^2 \rightarrow \Lambda_r \SU (\C)$ of the r-Iwasawa decomposition
$$
    \exp (z\,\xi_{\l})=F_\l \,B_\l
$$
and define $\breve F_\l: \R^2 \rightarrow \Lambda_r \SU (\C)$ to be the unitary factor of the r-Iwasawa decomposition
$$
    \exp (z \,p(\l) \,\breve \xi_{\l})=\breve F_\l \, \breve B _\l\,.
$$
Terng-Uhlenbeck \cite{TerU} obtained a relationship between $F_\l$ and $\breve{F}_\l$ and found
$$
    F_\l (z)= h_{L',\alpha_0} \breve F_\l (z) h^{-1}_{L'(z),\alpha_0}     \hbox{ with } L' (z)={ {}^t \overline{\breve F} }_{\alpha_0}(z) L'\,.
$$
We provide a proof of this in the appendix. We next show that closing conditions are preserved when changing the first factor in the factorization $(L',\,\breve \xi_\l)$.
\begin{proposition}
If there is $(L'_0,\,\breve \xi_\l) \in \C\P^1 \times I_{\delta}(\tilde a)$, such that $$\xi_{0,\l} =p(\l)\,h_{L'_0,\alpha_0}\,\breve \xi _\l \,h^{-1}_{L'_0,\alpha_0}$$ induces a minimal annulus with period $\tau$, then for any $L'_1 \in \C\P^1$, the extended frame associate to
$\xi_{1,\l} =p(\l)\,h_{L'_1,\alpha_0}\,\breve \xi_\l \,h ^{-1}_{L'_1,\alpha_0}$ produces a $\tau$-periodic minimal annulus.
\end{proposition}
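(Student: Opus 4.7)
The plan is to argue that the passage $L'_0 \to L'_1$ preserves the entirety of the spectral data that governs $\tau$-periodicity, so that closing is a condition on the spectral curve rather than on the particular element of the isospectral set.

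First I would verify that both potentials lie in the same $I(a)$ with a common Hopf-differential phase $\Theta$. Since $\det h_{L',\alpha_0} = 1$ and conjugation preserves determinants,
$$-\l\det\xi_{i,\l} = p^2(\l)(-\l\det\breve\xi_\l) = p^2(\l)\tilde a(\l) =: a(\l)$$
for $i = 0, 1$. A brief residue computation at $\l = 0$, using the normalization $\hat\xi_{-1} \in \mi\R^+\bigl(\begin{smallmatrix} 0 & 1 \\ 0 & 0 \end{smallmatrix}\bigr)$ required for membership in $\pk$, shows the off-diagonal product $\beta_{-1}\gamma_0 = a(0) = -\tfrac{1}{16}e^{\mi\Theta}$ is also the same for both, pinning down a common phase $\Theta$. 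Hence the vertical closing condition ${\rm Re}(-\mi e^{\mi\Theta/2}\tau) = 0$ is automatically inherited by $\xi_{1,\l}$.

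Next I would invoke Proposition~\ref{periodique} to transfer $\tau$-periodicity at the level of the polynomial Killing field. By \eqref{eq:Q-xi-Q}, the matrix $\xi_{i,\alpha_0}$ is conjugate to $\bigl(\begin{smallmatrix} 0 & \tilde\beta(\alpha_0)(1-|\alpha_0|^2)^2 \\ 0 & 0 \end{smallmatrix}\bigr)$, a nonzero nilpotent, so neither $\xi_{0,\l}$ nor $\xi_{1,\l}$ has matrix-roots in $\C^\times$ and the proposition applies. Its characterization of the stabilizer $\Gamma_\xi$ via the existence of a function $\mu$ on $\Sigma$ satisfying properties (1)--(3) involves only the spectral curve $\Sigma$ (equivalently $a$) and the candidate period vector $(\tau, 0, \ldots, 0)$, never the specific potential. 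Thus $\tau \in \Gamma_{\xi_{0,\l}}$ forces $\tau \in \Gamma_{\xi_{1,\l}}$, so $\zeta_{1,\l}$ is $\tau$-periodic and the monodromy $F_{1,\l}(\tau)$ is well defined as a holomorphic function of $\l$ alone, commuting with $\xi_{1,\l}$.

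The remaining horizontal closing at $\l = 1$ amounts to $\mu(1) = \pm 1$ together with the supplementary factorization $e^h = f\nu + g$ with $f(1) = 0$ of Definition~\ref{spectraldata}(vi) at the double roots $\alpha_0, 1/\bar\alpha_0$. Both are conditions on the spectral-data pair $(a, b)$ alone, and $b$ is uniquely determined by $(a, \tau)$ via Proposition~\ref{perioddiff}. Hence the spectral data that closes $\xi_{0,\l}$ also closes $\xi_{1,\l}$, yielding $F_{1,1}(\tau) = \pm\un$ and a $\tau$-periodic minimal annulus. The hard part will be certifying that condition (vi)---the genuinely new ingredient in the double-root case---is $L'$-independent. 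The cleanest way forward is via the Terng--Uhlenbeck formula $F_{1,\l}(z) = h_{L'_1, \alpha_0}\breve F_\l(z) h^{-1}_{L'_1(z), \alpha_0}$ established in the previous subsection, which exhibits the local structure of $\mu$ near $\l = \alpha_0$ as controlled by $\breve F_{\alpha_0}(\tau)$---data that is independent of the choice of $L'_1$.
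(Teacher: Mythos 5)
Your reduction of the metric-periodicity to a potential-independent condition is a reasonable observation: Proposition \ref{periodique} characterizes $\gamma\in\Gamma_{\xi_\l}$ by the existence of a function $\mu$ on $\Sigma$ satisfying (1)--(3), conditions that mention only $(a,\gamma)$, so $\tau\in\Gamma_{\xi_{0,\l}}$ does transfer to $\tau\in\Gamma_{\xi_{1,\l}}$ (modulo the degenerate choices of $L'_1$ for which $(L'_1)^\perp$ is an eigenline of $\breve\xi_{\alpha_0}$, where $\xi_{1,\alpha_0}=0$ and your ``nonzero nilpotent'' claim fails; those cases need the separate removability proposition). This gives periodicity of $\omega_1$ and of the polynomial Killing field. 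But that is only the soft half of the statement.

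The genuine gap is the closing at the Sym point, $F_{1,1}(\tau)=\pm\un$, which you try to dispatch by saying that $\mu(1)=\pm1$ and condition (vi) are ``conditions on $(a,b)$ alone, and $b$ is uniquely determined by $(a,\tau)$ via Proposition \ref{perioddiff}.'' That proposition, and its uniqueness argument via Riemann's bilinear relations, is stated and proved only for $a$ with simple roots; here $a$ has double roots at $\alpha_0,1/\bar\alpha_0$ by construction, the spectral curve is nodal, and on the normalization the space of admissible differentials $\tfrac{b\,d\l}{\nu\l^2}$ is strictly larger (third-kind differentials with poles over the node appear). Consequently you have not shown that the eigenvalue functions $\mu_0$ and $\mu_1$ of the two monodromies coincide, nor that the decomposition $\mu_1=f_1\nu+g_1$ satisfies $f_1(1)=0$ — and this is precisely the content that must be proved, not assumed. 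The paper supplies the missing ingredient concretely: writing $F_{i,\l}(z)=h_{L'_i,\alpha_0}\breve F_\l(z)h^{-1}_{L'_i(z),\alpha_0}$ with $L'_i(z)={}^t\overline{\breve F}_{\alpha_0}(z)L'_i$, periodicity of the first annulus forces $L'_0(\tau)=L'_0$, hence $(L'_0)^\perp$ is an eigenline of $\breve F_{\alpha_0}(\tau)$; commutation $[\breve\xi_{\alpha_0},\breve F_{\alpha_0}(\tau)]=0$ gives $\breve F_{\alpha_0}(\tau)=x\,\breve\xi_{\alpha_0}+y\,\un$; and since $(L'_0)^\perp$ is \emph{not} an eigenline of $\breve\xi_{\alpha_0}$ (otherwise the node is removable), $x=0$ and $\breve F_{\alpha_0}(\tau)=\pm\un$. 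Only then is $L'_1(z)$ $\tau$-periodic for an \emph{arbitrary} $L'_1$, and the new frame closes. Your last sentence points at the Terng--Uhlenbeck formula but the observation that $\breve F_{\alpha_0}(\tau)$ ``is independent of $L'_1$'' is vacuous; what is needed, and missing, is the eigenline argument showing it equals $\pm\un$.
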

\begin{proof}
The annulus induced by $F_{0, \l}$ is periodic at $\l=1$, so $F_{0,1}(z+\tau)=F_{0,1}(z)$. In particular $F_{0,1}(\tau)=\un$ and the solution of sinh-Gordon equation is periodic $\omega_0 (z+\tau)=\omega_0 (z)$. This period condition on $\omega_0$ implies  $\zeta_{0,\l} (z+\tau)=\zeta_{0,\l} (z)$ for any $\l \in \C^\times$ and $z \in \C$, since entries of $\zeta _{0,\l}$ depend only on $\omega_0$ and its derivatives. Now we remark that the associate polynomial Killing field $\zeta_{0,\l} (z) \in I(a)$ decomposes uniquely as $(L'_0(z), \breve \zeta_\l (z)) \in \C\P^1 \times I_{\delta}(\tilde a)$ where $L'_0(z)=Q_{0,L_0(z)}^{-1} L_0(z)$ and $L_0(z) = \ker \zeta_{0,\alpha_0}(z)$. Thus $\zeta_{0,\alpha_0}(z)$  periodic implies that $L_0(z)$ and $L'_0(z)$ are periodic.
The decomposition  of $\zeta_{0,\l} (z)$ is given by
$$\zeta_{0,\l} (z) = p(\l) g_{L_0(z), \alpha_0} \breve \zeta_\l (z) g^{-1}_{L_0(z), \alpha_0}=p(\l) h_{L'_0(z), \alpha_0} \breve \zeta_\l (z) h^{-1}_{L'_0(z), \alpha_0}$$
and $\breve \zeta_\l (z+\tau)=\breve \zeta_\l (z)$ for any $\l \in \C^\times$.  We can recover explicitly this relation by using the decomposition with $L'=Q^{-1}_{0,L}L$ and the formula
$$
    F_{\l}(z)=h_{L'_0,\alpha_0} \breve F_\l (z) h^{-1} _{L'_0(z), \alpha_0}
$$
where $L'_0(z)={}^t\overline{ \breve{F}} _{\alpha_0}(z) L'_0$. Then
\begin{equation*} \begin{split}
    \zeta_{0,\l }(z) &= F_{\l}(z)^{-1} \xi_{0,\l} F_{\l}(z) \\
     &=  h_{L'_0(z), \alpha_0} \breve F^{-1}_\l (z)h_{L'_0,\alpha_0} ^{-1} \xi_{0,\l} h_{L'_0,\alpha_0}\breve F_\l (z) h^{-1}_{L'_0(z),\alpha_0}\,.
\end{split}
\end{equation*}
Since $L_0(z+\tau)=L_0(z)$, we have $L'_0(z+\tau)=L'_0(z)$. Periodicity $F_1(\tau)=\un$ implies $\breve F_{1}(\tau)=\un$, and then there is a $\tau$-periodic extended frame without a bubbleton associate to the polynomial Killing field $\breve \zeta_\l$. Now we consider $\xi_{1,\l}$ associate to $(L'_1, \breve \xi_\l)$ and we prove that its extended frame has the same period $\tau$. To see that it remains to prove that $\breve F_{\alpha_0} (\tau) =\un$ to conclude the periodicity $L_1'(\tau+z)= { {}^t \overline{\breve F} }_{\alpha_0}(\tau) L_1'(z) = L_1'(z)$.

If $L'_0(\tau)=L'_0$, then $L'_0= L'_0 (\tau)= {}^t \bar {\breve F }_{\alpha_0} (\tau) L'_0$ and $L'_0$ is an eigenvalue
of ${}^t \overline{\breve F }_{\alpha_0} (\tau)={\breve F }^{-1}_{1/ \bar \alpha_0}(\tau)$, hence $(L'_0)^{\perp}$ is an eigenline of  ${\breve F }_{\alpha_0}(\tau)$.

Now $\breve \zeta_{\l} (\tau)=\breve \zeta_{\l} (0)$ for all $\l \in \C^\times$ implies $[\breve \xi_{0,\alpha_0}, \breve F_{\alpha_0}(\tau)]=0$. Thus $(L'_0)^{\perp}$ is not an eigenline of $\breve \xi_{0,\alpha_0}$, since the polynomial Killing field
$\xi_{0,\alpha_0}$ would have a zero and we could remove the singularity of the spectral curve.

Recall that if  $A \in \SL (\C)$ and $B \in {\rm sl}_2 (\C)$, then $[A,B]=0$ implies $A=xB+y\un$.
This implies $\breve F_{\alpha_0}(\tau) = x\breve \xi_{0,\alpha_0} +y\un$. Now  $\breve \xi_{0,\alpha_0} L_0^{\perp} \neq L_0^{\perp}$ implies
$x=0$ and thus $\breve F_{\alpha_0}(\tau) = \pm \un$.
\end{proof}
\subsection*{Group action on bubbletons.} We prove that there is a group action which acts transitively on the first factor $\tilde L'$ of the decomposition $\xi_\l$ into $(\tilde L', \tilde \xi_\l)$. For $\xi_\l \in \pk$ and $\beta \in \C$, define
$$
    m(\beta)\,\xi_\l = \left\{
\begin{array}{ll}
\left( \frac{\beta}{\l-\alpha_0} + \frac{\bar \beta \l}{1-\bar \a_0 \l} \right) \l^{\frac{1-g}{2}} \xi_\l\,, \hbox{ when } g=2k+1\,, \\
\left( \frac{\beta}{\l-\alpha_0} + \frac{\bar \beta \l}{1-\bar \a_0 \l} \right) (\l^{\frac{-g}{2}}+ \l^{1-\frac{g}{2}})  \xi _\l\,,\hbox{ when } g=2k\,.
\end{array} \right.
$$
Let $\exp(m(\beta) \xi_\l) = F_\l(\beta)\,B_\l(\beta)$ be the $r$-Iwasawa factorization for $r < \min\{ |\a_0|,\,1/|\a_0| \}$. We have a complex 1-dimensional isospectral group action $\tilde\pi:\C \times I(a) \to I(a)$ defined by
\begin{equation} \label{eq:1-action}
    \xi_\l (\beta) := \tilde\pi(\beta)\,\xi_\l = F_\l(\beta)^{-1}\,\xi_\l\,F_\l(\beta) = B_\l(\beta) \,\xi_\l \,B_\l^{-1} (\beta)\,.
\end{equation}
The potential $\xi_\l (\beta)$ has a decomposition
$(\tilde L'(\beta), \tilde \xi_\l (\beta))$, and we prove that $\tilde \xi_\l (\beta)=\tilde \xi_\l$ is invariant under this action.
\begin{theorem}
\label{bubbleaction}
Suppose a potential $\xi_\l$ decomposes into $(\tilde L', \tilde \xi_\l) \in \C \P^1 \times I(\tilde a)$. Then the action \eqref{eq:1-action} on $\xi_\l$  preserves the second term $\tilde \xi_\l$. If $\det \tilde \xi_{\alpha_0} \neq 0$, then $\C$ acts on $\tilde L' \in \C \P^1 \backslash \{\tilde L'_1,\tilde L'_2\}$ transitively where  $(\tilde L_1')^\perp, (\tilde L_2')^\perp$ are eigenlines of $\tilde \xi_{\alpha_0}$ and fixed points of the action. If $\det \tilde \xi_{\alpha_0} = 0$, then $\C$ acts on $\tilde L' \in \C \P^1 \backslash \{\tilde L'_3\}$ transitively where  $(\tilde L_3')^\perp$ is the eigenline of $\tilde\xi_{\alpha_0}$.
\end{theorem}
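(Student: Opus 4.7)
The argument splits into three parts: invariance of $\tilde\xi_\l$ under the action, identification of the induced action on $\C\P^1$, and orbit analysis. For invariance, the key observation is that $m(\beta)\,\xi_\l$ is a scalar function of $\l$ times $\xi_\l$, so $\exp(m(\beta)\,\xi_\l)$, and hence both Iwasawa factors $F_\l(\beta)$ and $B_\l(\beta)$, commute with $\xi_\l$. Consequently
\[
\xi_\l(\beta) = B_\l(\beta)\,\xi_\l\,B_\l(\beta)^{-1} = F_\l(\beta)^{-1}\,\xi_\l\,F_\l(\beta).
\]
The rational function $m(\beta)$ is engineered so that $m(\beta)\,\xi_\l$ has simple poles only at $\alpha_0, 1/\bar\alpha_0$, with residues proportional to the nilpotent matrices $\xi_{\alpha_0}$ and its Hermitian conjugate. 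I would exploit this pole structure together with $[B_\l(\beta),\xi_\l]=0$ and uniqueness of the decomposition from Corollary~\ref{th:decomp} to write $B_\l(\beta)\,h_{L',\alpha_0} = h_{L'(\beta),\alpha_0}\,C_\l(\beta)$ for some $L'(\beta)\in \C\P^1$ and some loop $C_\l(\beta)$ commuting with $\breve\xi_\l$. Substituting into the decomposition of $\xi_\l$ then gives
\[
\xi_\l(\beta) = p(\l)\,h_{L'(\beta),\alpha_0}\,\breve\xi_\l\,h_{L'(\beta),\alpha_0}^{-1},
\]
so $\breve\xi_\l(\beta)=\breve\xi_\l$, and hence $\tilde\xi_\l(\beta)=\tilde\xi_\l$, by uniqueness of the decomposition.

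For the induced action on $\C\P^1$, I would transport the residue of $m(\beta)\,\xi_\l$ at $\alpha_0$ through the conjugation by $h_{L',\alpha_0}$ and the normalisation $g(\delta)$; the resulting residue is proportional to $\tilde\xi_{\alpha_0}$, so the one-parameter subgroup on $\C\P^1$ obtained in this way sends a line $\ell$ to $\exp(c\beta\,\tilde\xi_{\alpha_0})\,\ell$ for some nonzero constant $c=c(\alpha_0,g)$. The orbit analysis then reduces to standard $\Sl(\C)$-geometry on $\C\P^1$: if $\det\tilde\xi_{\alpha_0}\neq 0$, the matrix is semisimple with two eigenlines, and in an eigenbasis the action reads $[v_1:v_2]\mapsto[e^{2c\beta\mu}v_1:v_2]$ with $\pm\mu$ the eigenvalues, which is transitive on $\C\P^1$ minus the two fixed lines $\tilde L'_1,\tilde L'_2$ because $\exp:\C\to\C^\times$ is surjective; if $\det\tilde\xi_{\alpha_0}=0$ with $\tilde\xi_{\alpha_0}\neq 0$, the matrix is nilpotent with a single eigenline, and in a Jordan basis the action is the affine translation $z\mapsto z+c\beta$ on $\C\P^1\setminus\{\tilde L'_3\}$, again transitive.

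The main obstacle is the invariance step. The pole of $\exp(m(\beta)\,\xi_\l)$ at $\alpha_0\in A_r$ is not carried solely by $F_\l(\beta)$, so the conjugation by $B_\l(\beta)$ involves delicate cancellations between singular parts. One must identify precisely which portion of $B_\l(\beta)$ modifies the simple factor $h_{L',\alpha_0}$ and verify that the remaining factor commutes with $\breve\xi_\l$. The specific design of $m(\beta)$, with powers of $\l$ depending on the parity of $g$, is chosen precisely so that this succeeds while preserving the reality condition \eqref{realcoeff}.
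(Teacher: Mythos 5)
There is a genuine gap, and it sits exactly where you flag ``the main obstacle''. Your opening claim that both Iwasawa factors $F_\l(\beta)$ and $B_\l(\beta)$ commute with $\xi_\l$ is false: only the product $F_\l(\beta)B_\l(\beta)=\exp(m(\beta)\,\xi_\l)$ commutes with $\xi_\l$. Indeed, if $B_\l(\beta)$ commuted with $\xi_\l$ then $\xi_\l(\beta)=B_\l(\beta)\,\xi_\l\,B_\l^{-1}(\beta)=\xi_\l$, the action would be trivial, and the transitivity you are trying to prove would be contradicted. The displayed identity $B_\l(\beta)\,\xi_\l\,B_\l^{-1}(\beta)=F_\l^{-1}(\beta)\,\xi_\l\,F_\l(\beta)$ only needs the product to commute, so it survives; but your second step explicitly invokes $[B_\l(\beta),\xi_\l]=0$ to produce the factorization $B_\l(\beta)\,h_{L',\alpha_0}=h_{L'(\beta),\alpha_0}\,C_\l(\beta)$, and that factorization is never established. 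The missing idea is this: conjugating $m(\beta)\,\xi_\l$ by the simple factor turns it into $\psi(\l,\beta)=f(\l)\,p(\l)\,\breve\xi_\l$, where $f$ is the rational prefactor; since $p(\l)$ cancels the poles of $f$ at $\alpha_0,1/\bar\alpha_0$ and the reality condition gives ${}^t\overline{\psi(1/\bar\l,\beta)}=-\psi(\l,\beta)$, one has $\psi\in\Lambda_r\su$, hence $\exp(\psi)\in\Lambda_r\SU$ and its $r$-Iwasawa factorization is \emph{trivial} ($\breve B_\l(\beta)=\un$). Applying the Terng--Uhlenbeck formula to $F_\l(\beta)B_\l(\beta)=h_{L',\alpha_0}\exp(\psi)\,h^{-1}_{L',\alpha_0}$ then gives $F_\l(\beta)=h_{L',\alpha_0}\,\breve F_\l(\beta)\,h^{-1}_{L'(\beta),\alpha_0}$ with $L'(\beta)={}^t\overline{\breve F}_{\alpha_0}(\beta)\,L'$, and the invariance of $\breve\xi_\l$ drops out because $\exp(\psi)$ is a function of $\breve\xi_\l$ and so commutes with it. No analysis of ``which portion of $B_\l(\beta)$ modifies the simple factor'' is needed.

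Your orbit analysis has the right qualitative shape (semisimple versus nilpotent, surjectivity of $\exp:\C\to\C^\times$ versus affine translation), but the identification of the induced action as $\ell\mapsto\exp(c\beta\,\tilde\xi_{\alpha_0})\,\ell$ with fixed points the eigenlines of $\tilde\xi_{\alpha_0}$ is inconsistent with the statement you are proving: the theorem removes lines whose \emph{orthogonal complements} are eigenlines of $\tilde\xi_{\alpha_0}$. The correct action is $L'\mapsto\exp\bigl(-\bar\beta\,\gamma_0\,\breve\xi_{1/\bar\alpha_0}\bigr)L'$ for an explicit constant $\gamma_0$, because ${}^t\overline{\breve F}_{\alpha_0}(\beta)=\breve F^{-1}_{1/\bar\alpha_0}(\beta)$ forces evaluation at $1/\bar\alpha_0$ rather than at $\alpha_0$; the eigenlines of $\breve\xi_{1/\bar\alpha_0}$ are the Hermitian orthogonal complements of those of $\breve\xi_{\alpha_0}$ by the reality condition \eqref{realcoeff}, which is exactly how the perps enter the statement. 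With that correction your dichotomy argument goes through as described.
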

\begin{proof}
Using remark \ref{isomnorm}, we prove the theorem with $L'=g(\delta)\tilde L'$ and $\breve \xi_\l= g(\delta) \tilde \xi_\l g(\delta)^{-1}$. Now
$\tilde L'$ is an eigenline of $\tilde \xi_{\alpha_0}$ if and only if $L'$ is an eigenline of $\breve \xi_\l$. We consider for $\beta \in \C$ the map
$$
     \psi (\l, \beta) =\left\{
    \begin{array}{ll}
    \left( \frac{\beta}{\l-\alpha_0} + \frac{\bar \beta \l}{1-\bar \a_0 \l} \right) \l^{\frac{1-g}{2}}p(\l) \,\breve \xi_\l\,, \hbox{ when } g=2k+1\,, \\
    \left( \frac{\beta}{\l-\alpha_0} + \frac{\bar \beta \l}{1-\bar \a_0 \l} \right) (\l^{\frac{-g}{2}}+ \l^{1-\frac{g}{2}})p(\l) \,\breve \xi_\l\,, \hbox{ when } g=2k\,.
\end{array} \right.
$$
Then $\psi$ satisfies $^t \overline{\psi(1 / \bar \l,\beta)} = - \psi(\l,\beta)$ and $\psi \in \Lambda_r \su$. We have by the r-Iwasawa decomposition $\exp(\psi (\l, \beta))=\breve F_\l (\beta)$. Further, since $\breve B_\l(\beta)=\un$ we have
$$
    \breve F_\l^ {-1} (\beta)\, \breve \xi_\l\, \breve F_\l (\beta) = \breve B_\l(\beta)\, \breve \xi_\l \, \breve B_\l^{-1} (\beta) = \breve \xi_\l\,.
$$
Suppose $\xi_\l (\beta)$ has a decomposition
$(L'(\beta), \breve \xi_\l (\beta))$. We prove next that $\breve \xi_\l (\beta)=\breve \xi_\l$ is invariant by the group action $m(\beta)\xi_\l$.

From $F_\l (\beta) B_\l(\beta)=h_{L',\alpha_0} \exp( \psi (\l, \beta) )h^{-1}_{L',\alpha_0}=h_{L',\alpha_0} \breve F_\l (\beta)h^{-1}_{L',\alpha_0}$, and by the Terng-Uhlenbeck formula we obtain
$$
    F_\l (\beta) = h_{L',\alpha_0} \breve F_\l (\beta)h^{-1}_{L'(\beta) ,\alpha_0} \hbox{ where }L'(\beta)={}^t \overline{ \breve F}_{\alpha_0} (\beta) L'\,.
$$
Applying the action and the invariance of the conjugation of $\breve \xi_\l$ by $\breve F_\l (\beta)$ we have
$$
    \xi_\l (\beta) = \tilde \pi (\beta) \xi_\l= p(\l) h_{L'(\beta) ,\alpha_0}  \breve \xi_\l    h^{-1}_{L'(\beta) ,\alpha_0}\,.
$$
This proves that $\breve \xi_\l$ is invariant while $L'(\beta)$ changes under the group action.  We consider now the map  $\beta \mapsto L'(\beta)$, and prove that this map spans $\C\P^1$ (if $L'(0)\neq L'_0$ and $L'(0) \notin \{ \tilde L_1',\,\tilde L_2',\,\tilde L_3' \}$ ).

First we assume $\det \breve \xi _{\alpha_0} \neq 0$ (so also $\det \breve \xi _{1/\bar  \alpha_0} \neq 0$). We denote by $v_1,\,v_2$ the eigenvectors of $\breve  \xi _{1/ \bar \alpha_0}$, and by $\nu_1,\,\nu_2$ the corresponding eigenvalues. Since ${\rm trace}\; \breve \xi_\l=0$, we have $\nu_2=-\nu_1$.  Now we have for
\[
    \gamma_0 =\left\{
    \begin{array}{ll} \bar{\alpha}_0^{g/2-5/2}(1-|\alpha_0|^2)
     & \mbox{ for $g$ odd } \\
    \bar{\alpha}_0^{g/2-3}(1-|\alpha_0|^2) & \mbox{ for $g$ even}
\end{array} \right.
\]
that
${}^t \overline{\psi (\alpha_0, \beta)}=-\psi (1/ \bar \alpha_0, \beta) = -\bar\beta\,\gamma_0\,\breve \xi_{1/ \bar \alpha_0}$ and ${}^t \overline{\breve F }_{\alpha_0} (\beta) = \exp(- \psi (1/ \bar \alpha_0, \beta))$. With $L'=<xv_1+yv_2>$ we obtain
\begin{align*}
    L'(\beta) &= {}^t \overline{\breve F } _{\alpha_0} (\beta) (xv_1+yv_2) = \exp(- \psi (1/ \bar \alpha_0, \beta))(xv_1+yv_2) \\
        &=\exp(-\bar\beta \gamma_0 \nu_1 )\,x\,v_1  + \exp(-\bar\beta \gamma_0 \nu_2 )yv_2=w\,.
\end{align*}
Hence
$${}^t \overline{\breve F }_{\alpha_0} (\beta)L'=< w> $$
and the map $L' : \C\P^1 \to \C\P^1$ given by $L'(\beta)=<w>$ is surjective since
$$
    \beta:\C\P^1 \to \langle \exp(-\bar\beta \gamma_0 \nu_1 )xv_1+\exp(\bar\beta \gamma_0 \nu_1 )yv_2 \rangle \in \C\P^1
$$
is surjective.

Now assume ${\rm det} \; \breve \xi _{\alpha_0}=\det \breve \xi _{1/\bar  \alpha_0} = 0$. There exists a basis $(v_1,\,v_2)$ with $\breve \xi _{1/\bar  \alpha_0}v_1=0 \hbox{ and }\breve \xi _{1/\bar  \alpha_0} v_2=v_1$, and then
$$
    {\rm exp} (-   \psi (1/\bar\a_0,\,\beta))( x\,v_1 + y\,v_2) = (x - \bar\beta \gamma_0 y)\,v_1 + y\,v_2.
$$
For $y \neq 0$ the map
$
    \beta: \C\P^1 \to  \langle (x - \bar\beta \gamma_0 y)v_1 + yv_2 \rangle \in \C\P^1
$
is surjective. When $y=0$ then $L'=v_1$ is an eigenvector of $\breve \xi_{\alpha_0}$, and
there is no bubbleton.
\end{proof}


\section{Spectral curves of the Riemann family}
The Riemann family consists of embedded minimal annuli in $\bbS^2 \times \R$ that are foliated by horizontal constant curvature curves of $\SY^2$. From \cite{hauswirth2006} these annuli can be conformally parameterized by their third coordinate with $Q=\frac14 (dz)^2$ and the metric $ds^2=\cosh^2 \o \,|dz|^2$ is obtained from real-analytic solutions of the Abresch system \cite{Abr}
\begin{equation} \label{eq:Abresch-system}
\left\{\begin{array}{ll}
    \Delta \o  + \sinh  \o \cosh \o &=0\,, \\
    \o_{xy} - \o_x\,\o_y \,\th &=0\,.
\end{array} \right.
\end{equation}
The second equation is the condition that the curve $x \mapsto (G(x,y),y)$ has constant curvature.
This condition induces a separation of variables of the sinh-Gordon equation,
and solutions can be described by two elliptic functions
\begin{equation} \label{eq:elliptic}
    f(x) = \frac{-\o _x}{\ch } \quad \mbox{ and } \quad  g(y) = \frac{- \o _y}{\ch }
\end{equation}
of real variables $x$ and $y$ respectively, and for $c<0,\,d<0$ solve the system
\begin{equation*}
\begin{split}
- (f_x)^2=f^4 +(1 +c-d )f^2 + c\,,  &\qquad
-f_{xx}= 2f^3 +(1 +c-d )f \,, \\
- (g_y)^2=g^4 +(1 +d-c )g^2 + d\,, &\qquad
-g_{yy}= 2 g^3 +(1 +d-c )g\,.
\end{split}
\end{equation*}
We can then recover the function $\o$ by
\begin{equation} \label{eq:3.1}
\sinh \o=(1 +f^2+g^2)^{-1}(f_x + g_y)\,.
\end{equation}
The spectral curves of members of the Riemann family have spectral genus 0, 1 or 2.
The spectral genus zero case consists of flat annuli $\gamma \times \R$, where $\gamma \subset \SY^2$ is a great circle. In this case the solution of the sinh-Gordon equation is the trivial solution $\o \equiv 0$. The spectral genus 1 case consists of solutions of the sinh-Gordon equation that only depend on one real variable. The corresponding minimal annuli are analogous to associate family members of Delaunay surfaces. In particular the spectral genus 1 case contains the rotational annuli and helicoids, and these are foliated by circles.  Amongst the spectral genus 2 surfaces, the corresponding minimal annuli are again foliated by circles, but no longer have rotational symmetry. This condition endows the spectral curve with an  additional symmetry.
\subsection*{Spectral genus 0.} We first study annuli with spectral curves of genus 0. Inserting the trivial solution of the sinh-Gordon equation $\omega_0 \equiv 0$ into \eqref{eq:symb}, and setting $\gamma =1$, gives
\begin{equation}
	\alpha (\o_0) = \frac{1}{4}\,
	\begin{pmatrix}
    0 &  \mi \lambda^{-1} \\
 \mi &  0
  \end{pmatrix}\, dz + \frac{1}{4}\,\begin{pmatrix}
    0 & \mi  \\
 \mi  \lambda & 0
  \end{pmatrix} d \bar{z}
\end{equation}
The solution of $F_\l^{-1}dF_\l  =\alpha (\o_0),\,F_\l (0) = \un$ is given by
\begin{equation} \label{eq:flat-frame} \begin{split}
    F_\l &= \exp \left( \frac{\mi}{4}\begin{pmatrix} 0 & \lambda^{-1} z + \bar z \\ z + \lambda\,\bar{z} & 0 \end{pmatrix} \right) \\
    &=
    \begin{pmatrix} \cos \bigl(\frac{1}{4}(\frac{z}{\sqrt{\l}} + \bar z \sqrt{\l}) \bigr) &
    \mi \frac{\sin \bigl( \frac{1}{4}(\tfrac{z}{\sqrt{\l}} + \bar z \sqrt{\l}) \bigr)}{\sqrt{\l}} \\
    \mi \sqrt{\l}\sin \bigl( \frac{1}{4}(\tfrac{z}{\sqrt{\l}} + \bar z\,\sqrt{\l}) \bigr)  &
    \cos \bigl( \frac{1}{4}( \tfrac{z}{\sqrt{\l}} + \bar z \sqrt{\l}) \bigr) \end{pmatrix}.
\end{split}
\end{equation}
The horizontal part of $( F_\l\, \sigma_3\, F_\l^{-1},\, {\rm Re}( -\mi\lambda^{-1/2}z))$ computes to
\[
  F_\l\, \sigma_3\, F_\l^{-1} = \begin{pmatrix} \mi \cos \bigl( \mathrm{Re}\,(z\,\lambda^{-1/2}) \bigr) &
    \lambda^{-1/2}  \sin \bigl( \mathrm{Re}\,(z\,\lambda^{-1/2}) \bigr) \\
        -\lambda^{1/2}  \sin \bigl( \mathrm{Re}\, (z\,\lambda^{-1/2}) \bigr) &
            -\mi \cos \bigl( \mathrm{Re}\,(z\,\lambda^{-1/2}) \bigr) \end{pmatrix}\,.
\]
Identifying $\su \cong \R^3,\,\bigl( \begin{smallmatrix} \mi w & u + \mi v \\ -u + \mi v & -\mi w \end{smallmatrix} \bigr) \cong (u,\,v,\,w)$, evaluating the associated family at $\lambda =1$, and writing $z = x + \mi y$, we obtain the conformal minimal immersion $\C \to \bbS^2 \times \R \subset \R^4$ given by
\[
    X_1 (x,\,y) = \bigl( \sin x,\,0,\,\cos x,\,y \bigr)\,.
\]
Restricting $X_1$ to the strip $(x,\,y) \in [0,\,2\pi] \times \R$ then gives an embedded minimal flat annulus in $\bbS^2 \times \R$. Evaluating the associated family at some other point $\lambda_0 \in \bbS^1$, then $\tau$-periodicity requires that $\rm{Re}\,(\tau \l_0 ^{-1/2}) \in 2\,\pi\,\bbZ$.

We next compute the corresponding spectral data $(a,\,b)$. Since
\[
    F_\l = \exp (z\,\xi_\l - \bar z \,\overline{\xi_{1/\bar\l}}^t ) \quad \mbox{ for } \quad
    \xi_\l= \tfrac{\mi}{4} \bigl(\begin{smallmatrix}    0 &  \l ^{-1} \\  1  & 0  \end{smallmatrix} \bigr)
\]
coincides with the extended flat frame \eqref{eq:flat-frame} computed above, we conclude that $\xi_\l$ is a potential for the flat surface. Hence $a(\lambda) = -\l \det \xi_\l = -1/16$, and the spectral curve \eqref{eq:sigma} is the 2-point compactification of $\{ (\nu,\,\l) \mid \nu^2 = -\l^{-1}/16 \}$. The flat annulus has the simplest possible spectral curve. It is a genus zero hyperelliptic curve, so a double cover of $\C\bbP^1$ with two branch points.

The eigenvalues of $F_\l$ in \eqref{eq:flat-frame} are $\exp ( \pm \tfrac{\mi}{4} (z\l^{-1/2} + \bar z \l^{1/2}))$. Therefore the logarithmic eigenvalue (up to sign) of the monodromy with respect to the translation $z \mapsto z + 2\pi$ is
\[
    \ln \mu (\l) = \frac{\pi\mi }{2}\left(\l^{-1/2} + \l^{1/2} \right)\,.
\]
Then
\[
    d\ln \mu = \frac{\pi\mi(\l -1)}{4 \l^{3/2}}\,d\l = \frac{\pi\,(1-\l)}{16 \l^2 \nu}\,d\l
\]
since $\l^{3/2} = -4\mi\l^2\nu$. Thus $b(\l)=\frac{\pi}{16}(1-\l)$.
%
%
\subsection*{ Spectral genus 1.}  We apply the Pinkall-Sterling iteration to the case where $\o$ satisfies $\alpha \o_z + \beta \o_{\bar z}=0$ for $\alpha,\beta \in \C$.
The relation implies that $|\alpha|=|\beta|$ and up to a change of coordinate we assume without loss of generality that $\o_x =0$.
Then $\o_z= - \o_{\bar z}$, and we are exactly in the setting of Abresch's system \cite{Abr} and there is a constant $d <0$ with $-b_y^2= (b^2+1)(b^2 +d)$ where
\begin{equation} \label{ab}
    b(y)=\frac{-\o_y}{\cosh \o} \quad \mbox{ and }
    \quad \sinh \o = \frac{b_y}{1+b^2}\,.
\end{equation}
We now use the Pinkall-Sterling iteration to compute the polynomial Killing field. Starting with $u_{-1}=\sigma_{-1}=0$ and $\tau_{-1}=\mi/4$, and using $4\o_{z\bar z}= -\tfrac{1}{2}\sinh (2\o)$, gives
\[
u_0=-4\mi \o_z \tau_{-1}=\o_z \,, \qquad
\sigma_0 = \gamma\,e^{2\o} \tau_{-1} + 4\mi \gamma\, u_{0;\bar z}= \tfrac{1}{4}\,\mi\gamma\,e^{-2\o}\,.
\]
We use the function $\phi_0$ to compute $\tau_0= 2\mi\bar\gamma ( \tfrac{1}{2}\phi_0 - u_{0;z})$.
We have $u_0 = -\o_{\bar z}$, and then
\begin{equation*} \begin{split}
    \phi_{0;z} &= -4 \o_z \o_{z\bar z}= \tfrac{1}{4}\,(\cosh (2\o))_z \,, \\
    \phi_{0;\bar z} &=-\o_{\bar z}\sinh \o \cosh \o  = \tfrac{1}{4}(\cosh (2\o))_{\bar z}\,.
\end{split}
\end{equation*}
Then $\tau_0= 2\mi\bar\gamma\,( \o_{z \bar z} + \tfrac{1}{8} \cosh (2\o) ) = \tfrac{1}{4}\mi\bar\gamma\, e^{-2\o}$.

At the next step we find $u_1=-2\mi \tau_{o;z} - 4\mi \o_z \tau_{0}=0$, $\sigma_1 = \gamma\,e^{2\o} \tau_{0} = \tfrac{\mi}{4}$ and $\tau_1 =0$. This gives the polynomial Killing field \eqref{potentiel} as
$$
    \zeta_\l = \frac{\mi}{4}\,\begin{pmatrix}
    -2\,\o_y  & e^{\o}\l^{-1} + \bar\gamma\, e^{-\o} \\
    \gamma\, e^{-\o} + e^{\o}\l  & 2\,\o_y
    \end{pmatrix}\,.
$$
Then $a(\l) = - \l \det \zeta_\l = -\tfrac{1}{16}\left( \gamma + (2\cosh(2\o) + 4\o_y^2)\,\lambda + \bar\gamma \lambda^2\right)$. Using \eqref{ab} gives
$$
2 \o_y^2 + \cosh 2\o= 2 b^2 \cosh^2 \o + 1+2 \sinh^2 \o=1-2d\,.
$$
so that $a(\l) =  -\tfrac{1}{16}\left( \gamma + 2(1-2d)\,\lambda + \bar\gamma \lambda^2\right)$. Its roots are $(2d-1 \pm 2\sqrt{d^2-d})\,\gamma$. If $\gamma = \pm 1$,
then this polynomial satisfies the additional symmetry
\begin{equation} \label{eq:additional-symmetry}
    \l^{2g} a(1/\l) =a(\l)\,,
\end{equation}
for $g=1$, and it has two real roots. The corresponding spectral curve is a double cover of $\C\bbP^1$ branched at 4 points, so a hyperelliptic curve of genus 1.
To close the surface, we have to close the third coordinate  with $Q=\tfrac{1}{4}\gamma \l_0^{-1}(dz)^2=\tfrac{1}{4} (dz)^2$. Riemann annuli of spectral genus 1 satisfy $\o_x=0$ or $\o_y=0$. This means that $\l_0=\pm \gamma$. We can parameterize the annulus in such a way that $\l_0= 1$, and apply the iteration with $\gamma= 1$. This corresponds to the case where $y \mapsto \o(y)$ depends only on its third coordinate and describes rotational examples. In the other case where $\l_0=1$ and $\gamma=-1$,
this corresponds to the helicoidal surfaces, where the surface is foliated by horizontal geodesics. In this case the function $\o$ depends only on the variable $x$.


\subsection*{ Symmetric spectral genus 2.} We next consider general real-analytic solutions of Abresch's system. In this case we first prove that they correspond to spectral genus 2 surfaces.
\begin{lemma}
Every solution $\o : \C \to \R$ of Abresch's system \eqref{eq:Abresch-system} satisfies
\begin{equation} \label{eq:3-relation}
    \o_{zzz}-2\o_{z}^3= -\tfrac{1}{4}\,\o_{\bar z} + \tfrac{1}{2}\,(c-d)\,\o_z\,.
\end{equation}
\end{lemma}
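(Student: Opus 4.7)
My plan is to translate the sought identity into a statement about the auxiliary functions $f(x)$ and $g(y)$, and then reduce it to a single algebraic relation that follows from the ODEs for $f$ and $g$.

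\textbf{Step 1 (Reformulate in terms of $F=f-ig$).} Since $\omega_x=-f\cosh\omega$ and $\omega_y=-g\cosh\omega$, we have $\omega_z=\tfrac12(\omega_x-i\omega_y)=-\tfrac12 F\cosh\omega$. Because $f$ depends only on $x$ and $g$ only on $y$, the mixed partial derivatives $f_y$ and $g_x$ vanish, so
\[
F_z=\tfrac12(f_x-g_y),\qquad F_{zz}=\tfrac14(f_{xx}+ig_{yy}).
\]
I will differentiate the relation $\omega_z=-\tfrac12F\cosh\omega$ twice, substitute $\omega_z$ and $\omega_{zz}$ back into the expression, and use $2\cosh^2\omega-\cosh(2\omega)=1$ to arrive at
\[
\omega_{zzz}-2\omega_z^3=-\tfrac12 F_{zz}\cosh\omega+\tfrac34 FF_z\sinh\omega\cosh\omega+\tfrac18 F^3\cosh\omega.
\]

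\textbf{Step 2 (Apply the ODEs for $f$ and $g$).} Using $f_{xx}=-2f^3-(1+c-d)f$ and $g_{yy}=-2g^3-(1+d-c)g$, together with the identities
\[
f^3+ig^3=\tfrac14 F(F^2+3\bar F^2),\qquad (1+c-d)f+i(1+d-c)g=\bar F+(c-d)F,
\]
I compute $F_{zz}$ in closed form. Substituting this into the expression for $\omega_{zzz}-2\omega_z^3$ and collecting terms shows that, after dividing by the nonvanishing factor $\cosh\omega$, the desired identity is equivalent to
\[
-\tfrac{3}{2}F\Bigl[F^2+\bar F^2+2(c-d)+2(f_x-g_y)\sinh\omega\Bigr]=0.
\]
Since $F^2+\bar F^2=2(f^2-g^2)$, everything boils down to
\begin{equation}\label{eq:plan-key}
(f^2-g^2)+(c-d)+(f_x-g_y)\sinh\omega=0.
\end{equation}

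\textbf{Step 3 (The key algebraic identity).} The equation $\sinh\omega=(f_x+g_y)/(1+f^2+g^2)$ from \eqref{eq:3.1} lets me rewrite $(f_x-g_y)\sinh\omega=(f_x^2-g_y^2)/(1+f^2+g^2)$. Using the first integrals $f_x^2=-f^4-(1+c-d)f^2-c$ and $g_y^2=-g^4-(1+d-c)g^2-d$, a direct computation factors as
\[
f_x^2-g_y^2=-(1+f^2+g^2)\bigl[(f^2-g^2)+(c-d)\bigr],
\]
so $(f_x-g_y)\sinh\omega=-[(f^2-g^2)+(c-d)]$, and \eqref{eq:plan-key} holds.

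The main obstacle is really only bookkeeping in Step~1; the algebraic miracle is the clean factorization in Step~3, which is what makes the coefficient $(c-d)$ on $\omega_z$ in the statement precisely right. Once Step~3 is in place, the rest of the argument is a mechanical substitution.
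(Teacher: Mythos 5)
Your proof is correct and takes essentially the same route as the paper's: both differentiate $\o_z=-\tfrac12(f-\mi g)\cosh\o$ and reduce the claim to the factorization $f_x^2-g_y^2=-(1+f^2+g^2)\bigl[(f^2-g^2)+(c-d)\bigr]$, which in the paper appears as the identity $2(f-\mi g)_z=(g^2-f^2+d-c)/\sinh\o$. Your packaging via $F=f-\mi g$, with the second-order ODEs substituted only at the end, is just a cleaner organization of the same computation.
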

\begin{proof}
Differentiating $\o_z = -\tfrac{1}{2}\,(f-\mi g)\,\cosh(\o)$ gives
$$
    \o_{zz} = \tfrac{1}{4}(f- \mi g)^2 \sinh (\o) \cosh (\o)  - \tfrac{1}{2} (f-\mi g)_z \cosh (\o)\,.
$$
Now using the equation of the system, we have
$$
    2(f-\mi g)_z = f_x -g_y = \tfrac{(1+f^2+g^2) (g^2-f^2 + d-c)}{f_x +g_y} =
    \tfrac{g^2-f^2 + d-c}{\sinh (\o)}\,.
$$
Then $\o_{zz} = \tanh (\o) \; \o^2_z - \tfrac{1}{4}\coth (\o) \left(g^2-f^2 + d-c \right)$ and thus
$$
\tanh (\o)\; \o_{zz} - \tanh^{2}(\o) \,\o_z^{2} + \tfrac{1}{4}(g^2-f^2) = \tfrac{1}{4}(c-d) \in \R
$$
since the imaginary part is $\tanh(\o)(\o_{xy} - \tanh(\o)\, \o_x \o_y)=0$.
Using the expression for $f_x+g_y$ and $g_y-f_x$ we obtain
\begin{align*}
2 f_x &= (1+f^2+g^2)\sinh (\o) -(f^2-g^2 +c-d) \sinh ^{-1}(\o)\,, \\
2 g_y &= (1+f^2+g^2)\sinh (\o) +(f^2-g^2 +c-d) \sinh ^{-1}(\o)\,.
\end{align*}
To understand higher order derivative we write
\begin{equation*}
\begin{split}
    -2(g^2 &- f^2 +(d-c))_z = \left( 2 f f_x + 2\mi g g_y\right) \\
    &=(f+\mi g)(1+f^2+g^2) \sinh (\o) - (f -\mi g) (f^2 - g^2 +c-d)\sinh ^{-1} (\o)\,.
\end{split}
\end{equation*}
We can check that
\begin{equation*}
\begin{split}
    \tfrac{1}{8}\,(f+\mi g)\cosh (\o) &= -\tfrac{1}{4} \o_{\bar z}\,, \\
    \tfrac{1}{8}\,(f+\mi g) (f^2+g^2)\cosh (\o) &= \tfrac{\cosh (\o)}{8}(f-\mi g)^3 + \tfrac{\cosh (\o)}{2} \mi fg (f-\mi g) \\ &=-\tfrac{\o^{3}_z }{\cosh^2 (\o)} -\mi fg \o_z\,.
\end{split}
\end{equation*}
Now we compute
\begin{equation*}
\begin{split}
\o_{zzz} - 2\o_z ^3 &= -2 \o^3_z + \o^{3}_z \mathrm{sech}^2 (\o) +2 \tanh (\o) \;\o_z \o_{zz} \\
    &\qquad + \tfrac{1}{4}\,\o_z \mathrm{csch}^2(\o) \left( g^2 -f^2 + d-c \right) \\
    &\qquad  -\tfrac{1}{4} \,\mathrm{coth}(\o) \left( g^2 - f^2 + d-c \right)_z \\
&=-2 \o^3_z + \o^{3}_z \mathrm{sech}^2(\o) - \o^{3}_z\mathrm{sech}^2(\o) - \mi fg \o_z \\
    &\qquad + 2 \o_z \left(\tanh^2 (\o)\; \o^2_z + \tfrac{1}{4}(f^2-g^2 + c-d)\right) \\
&\qquad + \tfrac{1}{4}\, \o_z ( g^2 -f^2 + d-c) \,\mathrm{csch}^2(\o)
-\tfrac{1}{4}\,\o_{\bar z}  \\
&\qquad - \tfrac{1}{8} (f-\mi g)(f^2-g^2 +c-d)\,\mathrm{coth}(\o) \,\mathrm{csch}(\o) \\
&= -\tfrac{1}{4}\,\o_{\bar z} + \o_z \left( 2\tanh^2(\o) \; \o^2_z -2 \o_z^2 \right) \\ &\qquad +
    \tfrac{1}{2}\,(c-d)\,\o_z + \tfrac{1}{2}\,(f -\mi g)^2 \o_z \\
&= -\tfrac{1}{4}\,\o_{\bar z} +  \tfrac{1}{2}\,(c-d)\,\o_z \,.
\end{split}
\end{equation*}
\end{proof}
Next we use the iteration of Pinkall-Sterling to compute the spectral curve associated to the algebraic relation \eqref{eq:3-relation}. It is a priori a one-parameter family of algebraic relations but $\o$ itself encodes other invariant quantities than the one found in the expression of $\o_{zz}$. The Pinkall-Sterling iteration gives
\begin{align*}
u_{-1}&= u_{2} =\sigma_{-1}= \tau_{2} = 0\,,\, u_{0} = \o_z\,,\, u_{1} = \bar\gamma \o_{\bar z} \,, \\ \sigma_{0}&= \tfrac{1}{4} \mi\gamma e^{-2\o}\,,\,  \sigma_1=2\mi e^{-2\omega} \bigl( \o_{zz}+\o_z^2 + \tfrac{1}{4}(c-d)\bigr)\,,\,\sigma_{2} = -\tfrac{1}{4}\,\mi \bar\gamma, \\
\tau_{-1} &= \tfrac{1}{4}\,\mi \,,\, \tau_{0} = 2\mi\bar\gamma \bigl( \o_{z}^2 - \o_{zz} + \tfrac{1}{4}(c-d)\bigr)\,,\,  \tau_1 = -\tfrac{1}{4}\,\mi \gamma^{-2} e^{-2\o}\,.
\end{align*}
This defines a solution of the Lax equation $\Phi$ by \eqref{potentiel} of degree $N=2$. To obtain a polynomial Killing field we skew-symmetrize and define
\[
    \zeta _\l (z) =\frac12  \Phi_\l (z) -  \frac{\lambda}{2}\,\overline{\Phi_{1/ \bar \l} (z)}^t\,.
\]
Then $\lambda\,\overline{\zeta_{1/\bar\lambda}}^t = -\zeta_\lambda$ and has $\lambda^{-1}$-coefficient $\tfrac{1}{8} \mi e^\o (1-\gamma) \bigl( \begin{smallmatrix} 0 & 1 \\ 0 & 0 \end{smallmatrix} \bigr)$. For $\zeta_\l$ to be $\calP_2$-valued we require that $\tfrac{1}{8} \mi e^\o (1-\gamma) \in \mi \R^+$, which means that $\gamma = -1$. In this case, $Q=\frac{1}{4} \gamma \l^{-1} (dz)^2$ implies that the Sym point is at $\l=-1$ and the entries of the polynomial Killing field are
\begin{equation*} \begin{split}
    \alpha &= (\o_z - \l \o_{\bar z })\,, \\
    \beta  &= \tfrac{\mi}{4}\l^{-1}e^{\o} + \mi \bigl( e^{-\o}(\o^2_{\bar z}+ \o_{\bar z \bar z}) -  e^{\o}(\o^2_{z} + \o_{zz}) \\ &\qquad + \tfrac{1}{4}(c-d)(e^{-\o} - e^{\o}) \bigr) - \tfrac{\mi}{4}\l\,e^{-\o}\,, \\
    \gamma &= -\tfrac{\mi}{4}e^{-\o} + \mi \l \bigl( e^{-\o}(\o^2_{ z} + \o_{zz}) - e^{\o}( \o^2_{\bar z} + \o_{\bar z \bar z}) \\ &\qquad + \tfrac{1}{4}(c-d)(e^{-\o} - e^{\o}) \bigr) +\tfrac{\mi}{4} \l^2 e^{\o}\,.
\end{split}
\end{equation*}
To compute the spectral curve, we have only to compute $a(\l) = \l(\alpha^2 + \beta\,\gamma)$ at one point. We choose a point where $\o(x_0,\,y_0)= \partial_x f(x_0)= \partial _y g (y_0)=0$.
At this point $\o_{zz}=\o_{\bar z \bar z}=0$, thus
\begin{align*}
    f(x_0) &= f_0=-\partial_x \o (x_0) = \tfrac{1}{2} \bigl( -1+d-c +\sqrt{\Delta}   \bigr) \,,\\
    g(y_0) &= g_0=-\partial_y \o (y_0) = \tfrac{1}{2} \bigl( -1+c-d +\sqrt{\Delta}   \bigr)\,.
\end{align*}
where $\Delta=(1+c-d)^2-4c=(1+d-c)^2-4d$.

Writing $a(\l)= \l\,(\alpha^2 + \beta\gamma) = a_0 + a_1\l + a_2\l^2 + a_3\l^3 + a_4\l^4$, and using $\o_z(x_0,\,y_0) = -\tfrac{1}{2} (f_0 - \mi g_0)$ and $\o_{\bar z}(x_0,\,y_0) =  -\tfrac{1}{2} (f_0 + \mi g_0)$, a computation gives the real coefficients
\begin{equation*} \begin{split}
a_0 &= a_4 = \tfrac{1}{16},\\
a_1 &= a_3 = \tfrac12 \,(\o_z^2 + \o_{\bar z}^2) =\tfrac14( f_0^2 - g_0^2), \\
a_2 &= \,(\o_z^4 + \o_{\bar z}^4) - 2\,(\o_z \o_{\bar z} + \o_z^2\o_{\bar z}^2) - \tfrac{1}{8}
    = -\tfrac{1}{8} - \tfrac12  g_0^2 - \tfrac12 f_0^2 - f_0^2g_0^2\,.
\end{split}
\end{equation*}
The four real roots of $a(\l)$ are $-1 - 2 f_0^2 \pm 2 \sqrt{f_0^2 + f_0^4},\, 1 + 2 g_0^2 \pm 2 \sqrt{g_0^2 + g_0^4}$, and $a(\l)$ satisfies the additional symmetry \eqref{eq:additional-symmetry} for $g=2$. In summary the spectral data of the 2-parameter family of the Riemann family are given by
\begin{proposition} \label{low-spectral-data}
The genus 0 spectral data of an embedded annulus with Sym point $\l =1$ is given by
\begin{enumerate}
\item[1)] $a(\l)=- \frac{1}{16}$ and $b(\l)=\frac{\pi}{16}(1-\l)$
\end{enumerate}
The genus 1 spectral data of an embedded annulus with Sym point $\l =1$ is given by
\begin{enumerate}
\item[2)] $a(\l)=\frac{1}{16\a}(\l-\alpha)(\a \l -1)$ for $\a \in (0,\,1)$ and $b(\l)= \frac{b(0)}{\gamma} (\l - \gamma)(\gamma \l -1)$, with $\gamma \in (\alpha,\,1)$ and $b(0) \in \mi\R$
both determined by $\a$.
\item[3)] $a(\l)=\frac{-1}{16\beta}(\l+\beta)(\beta \l +1)$ for $\beta \in (0,\,1)$ and $b(\l)= \frac{b(0)}{\gamma} (1- \l)(1+ \l)$ and $b(0) \in \R$ determined by $\beta$.
\end{enumerate}
The genus 2 spectral data of an embedded annulus with Sym point $\l =1$ is given by
\begin{enumerate}
\item[4)] $a(\l)=\frac{1}{16\beta \alpha}(\l-\a)(\a \l -1) (\l +\beta)(\beta \l +1)$ for $\a, \beta \in (0,1)$ and $b(\l)=\frac{b(0)}{\gamma}(1+\l)(\l - \gamma)(\gamma \l -1)$ for $\gamma \in (\alpha,\,1)$ and $b(0) \in \mi \R$ both determined by $\a$ and $\beta$.
\end{enumerate}
In conclusion, the polynomial $a$ satisfies the additional symmetry $\l^{2g} a(1/\l) = a(\l)$ and
\begin{enumerate}
\item[a)] $\l^{g+1} b(1/\l) = b(\l)$ if $a$ has a root $\alpha \in \R^+$ and $b(0) \in \mi \R$;
\item[b)] $\l^{g+1} b(1/\l) =- b(\l)$ if $a$ has only roots in $\R^-$ and $b(0) \in  \R$.
\end{enumerate}
\end{proposition}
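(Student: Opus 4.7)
My plan is to combine the explicit Pinkall-Sterling computations of $a$ already performed in the three preceding subsections with Proposition \ref{perioddiff} to determine $b$ from closing conditions. Each of the four cases reduces to a finite calculation of the polynomial coefficients plus verification of the closing conditions from Corollary \ref{periodbonn} and Definition \ref{spectraldata}.

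For the $a$ polynomials, the genus 0 formula is already extracted from the flat extended frame. For genus 1 the preceding subsection produced $a(\l) = -\tfrac{1}{16}(\gamma + 2(1-2d)\l + \bar\gamma\l^2)$ with $d < 0$; the Sym-point condition $\l_0 = 1$ combined with the parametrization convention $\gamma \in \SY^1$ forces $\gamma = \pm 1$. For $\gamma = 1$ (rotational, $\o_x = 0$) the discriminant $(1-2d)^2 - 1$ is positive, yielding two negative reciprocal real roots $\{-\beta, -1/\beta\}$ with a unique $\beta \in (0,1)$ determined by $\beta + \beta^{-1} = 2(1-2d)$, giving case $(3)$. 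For $\gamma = -1$ (helicoid, $\o_y = 0$) I would rerun the iteration with $\o_z = \o_{\bar z}$, which flips the sign of the linear term and yields two positive reciprocal real roots $\{\alpha, 1/\alpha\}$, giving case $(2)$. For genus 2 the iteration evaluated at a point $(x_0, y_0)$ where $\o$, $f_x$, $g_y$ vanish simultaneously was already carried out, producing the four real roots $-1 - 2f_0^2 \pm 2\sqrt{f_0^2 + f_0^4}$ and $1 + 2g_0^2 \pm 2\sqrt{g_0^2 + g_0^4}$; each pair multiplies to $1$, with the first pair negative and the second positive, giving case $(4)$.

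To obtain $b$, I would apply Proposition \ref{perioddiff}, which for each admissible $a \in \calM_g$ produces a unique meromorphic differential $\phi = b\,d\l/(\nu\l^2)$ with the prescribed pole expansion at $0$ and $\infty$ and vanishing real parts of the branch-cut periods. The constant term $b(0) = \tau e^{\mi\Theta}/32$ is fixed by Definition \ref{spectraldata}(iii). After imposing the reality condition $\l^{g+1}\overline{b(1/\bar\l)} = -b(\l)$, the remaining real freedom is eliminated by the branch-point conditions $\ln\mu(\alpha_i) \in \mi\pi\Z$ of Corollary \ref{periodbonn}(ii). In case $(2)$ this yields the factor $(\l - \gamma)(\gamma\l - 1)$ with a single real root $\gamma \in (\alpha, 1)$; in case $(3)$ the parity symmetry forced by $a$ having only negative roots compels $b$ to factor as $(1-\l)(1+\l)$ up to scale; in case $(4)$ the cubic $b$ is determined by two branch-point conditions, and the additional symmetry $\l^{2g} a(1/\l) = a(\l)$ of the spectral curve forces the factor $(1+\l)$ to split off.

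The symmetry $\l^{2g}a(1/\l) = a(\l)$ is immediate from the factored forms. The two cases for $b$ under $\l \mapsto 1/\l$ follow by inspection: when $a$ has a positive real root (cases $(2)$ and $(4)$) the reality condition combined with the factored form gives $\l^{g+1}b(1/\l) = b(\l)$ and $b(0) \in \mi\R$; when $a$ has only negative real roots (case $(3)$) one obtains $\l^{g+1}b(1/\l) = -b(\l)$ with $b(0) \in \R$. The hard part will be the explicit factorization of $b$ in case $(4)$, since this requires computing the periods of $\phi$ on a genus 2 hyperelliptic curve and verifying that the closing conditions force precisely the factor $(1+\l)$ together with a single real root $\gamma \in (\alpha, 1)$ of the residual quadratic; the additional spectral curve symmetry $\l^{2g}a(1/\l) = a(\l)$ coming from the circle-foliation assumption is the key structural input that makes this reduction tractable.
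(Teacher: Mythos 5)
Your proposal follows essentially the same route as the paper: the polynomials $a$ are read off from the Pinkall--Sterling computations of the three preceding subsections (with the $\gamma=\pm1$ dichotomy giving the rotational/helicoidal split in genus $1$), and $b$ is then pinned down by the existence/uniqueness statement of Proposition \ref{perioddiff} together with the branch-cut period conditions and the choice of $b(0)$ from the closing condition at the Sym point. The only part you leave schematic --- checking that the integrality conditions $h(\alpha_i)\in\mi\pi\Z$ at all branch points reduce to the single root $\gamma\in(\alpha,1)$ plus the scale $b(0)$ --- is exactly what the paper supplies by tracking where $h$ is real versus imaginary along the real segments and the unit circle using the involutions $\kappa$, $\varrho$, $\sigma$.
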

\begin{proof}
We have seen above that spectral curves of the Riemann family have an additional involution $(\l,\,\nu) \to (\l^{-1},\, \l^{1-g} \nu)$, since in all cases $\l^{2g} a(1/\l) = a(\l)$.
Now depending on $a(\l)$, we construct a function $h$ which satisfies the closing condition of the annulus. We prove that there are constants $\gamma$ and $b(0)$ such that $b$ satisfies the closing condition of Proposition \ref{perioddiff}.

First we remark that $\l^{g+1} \overline {b (1 / \bar \l)} =-b(\l)$ by construction.
We look for $h$ satisfying $\sigma^*h =-h$ and $dh= \frac{b\,d\l}{\nu \l^2}$.
First we need to prove that $h$ is well defined on $\hat\Sigma$ (Definition \ref{sigma-hat}).

In cases 1) and 3), there is a root $\a \in (0,1)$. Along the segment $(\a, \,1 / \a)$, the polynomial $b(\l) \in \mi\R$ and $\nu \in \mi\R$. Since $b(\l)$ has exactly one root in the interval $(\a,\,1)$ at $\gamma \in (\a,\,1)$, there exists exactly one value of $\gamma$ which cancels the following integral for a given $\a$ and $\beta$.
Using the additional symmetry, there is a real $\gamma \in (\alpha,\,1)$ with
$$
\int_{\a}^{1 / \a} \frac{b}{\nu \l^2}  \,d\l = 2 \int_{\a}^{1} \frac{b}{\nu \l^2} \,d\l=0\,.
$$
Moreover,  in cases 2) and 3), we have by the reality condition that
$$
\int_{-\beta}^{-1 / \beta} \frac{b}{\nu \l^2} \,d\l =0\,.
$$
Now the function $h$ with $dh = \frac{b\,d\l}{\nu \l^2}$ and $\sigma^* h = -h$ is well defined on $\hat\Sigma$, the curve with the two cycles around $(-1/ \beta,\, -\beta)$ and $(\alpha,\,1/ \alpha)$ removed.

For $\kappa :(\l,\,\nu) \to (\bar\l ,\,\bar\nu )$ on $ \tilde \Sigma$ have $\kappa^* h = - \bar h$, since $b(\bar \l)= \overline{ b(\l)}$.

On the real axis between $\l=0$ and $\l=\alpha$ (or $\l=1$ in the case 2)), the polynomial $a$ takes real positive values. Then the segment $(0,\,\a]$ (or $(0,\,1]$ in the case 2) is a set of fixed points for the involution $\kappa$. On this segment we deduce that $h=\kappa^*h$ and the function $h$ is purely imaginary on this segment. Since the integral $\int _\a ^1 d \ln \mu =0$, the function $h$ is imaginary at $\l=1$ and $h(\alpha)=h(1) \in \mi\R$.

The involution $\varrho$ in \eqref{eq:involutions} leaves $\bbS^1$ invariant, and we have $\varrho^* dh= -\overline{dh}$.
Hence $dh \in \mi\R$ on $\bbS^1$. Thus on the unit circle $h$ stays imaginary, so in particular $h \in \mi\R$ at $\lambda=-1$.

The segment $(-1,\,-\beta)$ is a set of fixed points for $\kappa$ and the function $h \in \mi\R$ on this segment. Since on the real line the function $a(\l)$ changes sign and become real negative on $(-\beta,\,0)$, the function $h \in \R$ on this segment. We can then deduce that $h(-\beta) =0$ at this point.
Now we can choose the value of $b(0)$ to get a multiple value of $\pi\mi$ at the sym point $\l =1$. This proves the closing condition and concludes the proof of the proposition.
\end{proof}
\begin{lemma}
If $\gamma$ is a root of $b$, then the corresponding function $|\mu (\gamma)| \neq 1$.
\end{lemma}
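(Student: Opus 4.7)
The plan is to show directly that $h(\gamma) := \ln\mu(\gamma)$ is a nonzero real number, so that $|\mu(\gamma)| = e^{h(\gamma)} \neq 1$. Here $\gamma \in (\alpha,1)$ denotes the real positive root of $b$ produced in Proposition~\ref{low-spectral-data}; the Sym condition already handles any root on $\bbS^1$.

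The first step is to identify the reality structure of $h$ along the real $\l$-axis. The additional symmetry $\l^{2g} a(1/\l) = a(\l)$ of the Riemann family endows $\Sigma$ with the antiholomorphic involution $\kappa : (\l,\nu) \mapsto (\bar\l,\bar\nu)$ satisfying $\kappa^* h = -\bar h$. On the segment $(0,\alpha)$, where $a(\l) > 0$ and $\kappa$ fixes points, the fixed-point identity $h = \kappa^* h = -\bar h$ forces $h \in \mi\R$ there. On the segment $(\alpha, 1/\alpha)$, where $a(\l) < 0$ and hence $\nu \in \mi\R$, the involution $\kappa$ swaps sheets but the composition $\sigma\kappa$ fixes each sheet, and the identity $(\sigma\kappa)^* h = +\bar h$ at a fixed point gives $h \in \R$ on this interval.

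Next, continuity of $h$ across the branch point $\l = \alpha$ on $\hat\Sigma$ forces the two one-sided limits to agree: one side is purely imaginary while the other is real, so $h(\alpha) \in \R \cap \mi\R = \{0\}$, which is consistent with the closing condition $h(\alpha) \in \mi\pi\Z$ of Definition~\ref{spectraldata}. Integrating along the real segment from $\alpha$ to $\gamma$ then gives $h(\gamma) = \int_\alpha^\gamma \phi$. The explicit form $b(\l) = b(0)(\l-\gamma)(\gamma\l-1)/\gamma$ with $b(0) \in \mi\R$, together with $\nu \in \mi\R$ on $(\alpha,1/\alpha)$, shows that $\phi = b\,d\l/(\nu\l^2)$ is a real-valued $1$-form on $(\alpha, \gamma)$; since $\gamma$ is the only root of $b$ inside $(\alpha,1)$, this integrand has constant sign on $(\alpha, \gamma)$, whence $h(\gamma) \in \R \setminus \{0\}$ and $|\mu(\gamma)| \neq 1$. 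The same reasoning handles the root $1/\gamma$ and, with minor modifications (using the cut over $(-1/\beta,-\beta)$), the analogous intervals appearing in case~4) of Proposition~\ref{low-spectral-data}.

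The main technical subtlety will be tracking the reality behaviour of $h$ across the branch points so as to pin down $h(\alpha) = 0$; once this starting value is established, the nonvanishing of $h(\gamma)$ follows at once from the constant-sign computation of the real integrand.
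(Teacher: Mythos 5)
Your argument is correct and follows essentially the same route as the paper: both rest on the reality of $h=\ln\mu$ along the segment over $[\alpha,1/\bar\alpha]$, the vanishing of $\mathrm{Re}\,h$ at the branch point $\alpha$, and the fact that $\gamma$ is the only root of $b$ in $(\alpha,1)$. The only (harmless) difference is in the last step: you integrate $\phi=\tfrac{b\,d\l}{\nu\l^2}$ from $\alpha$ to $\gamma$ and use that the real integrand has constant sign there, whereas the paper uses $\mathrm{Re}\,h(\alpha)=\mathrm{Re}\,h(1)=0$ together with a Rolle-type argument at the unique critical point $\gamma$.
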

\begin{proof}
For $\l \in [\alpha,\,\bar \alpha^{-1}]$, the function $h= \ln \mu$ is real and $\int _{\alpha} ^1 dh =0$. Then $\gamma$ is a root of $dh$ and is contained in $(\alpha,\,1)$. Since ${\rm Re}\,h(\alpha)={\rm Re}\,h(1)=0$, the value $\gamma$ is the local critical point of $h$. Then ${\rm Re}\,h(\gamma) \neq 0$, and thus $|\mu| \neq 1$.
\end{proof}

\appendix
\section{Terng-Uhlenbeck Formula}
\begin{proposition}
Let $h_{L',\alpha_0} \in H^r_{\alpha_0}$ the simple factor with $\alpha_0 \in \C^\times \setminus \bbS^1$, with $r<\min\{|\alpha_0|,\,1/|\a_0|\}$ and $L' \in \C\P^1$. Then
$$
    F_\l (z)= h_{L',\alpha_0} \breve F_\l (z) h^{-1}_{L'(z),\alpha_0}     \hbox{ with } L' (z)={ {}^t \overline{\breve F} }_{\alpha_0}(z) L'\,.
$$
\end{proposition}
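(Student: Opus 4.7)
The plan is to exploit uniqueness of the $r$-Iwasawa factorization. Starting from the decomposition $\xi_\l = p(\l)\,h_{L',\alpha_0}\,\breve\xi_\l\,h^{-1}_{L',\alpha_0}$ provided by Corollary~\ref{th:decomp}, exponentiating gives
\[
\exp(z\xi_\l) \;=\; h_{L',\alpha_0}\,\exp\bigl(z\,p(\l)\,\breve\xi_\l\bigr)\,h^{-1}_{L',\alpha_0} \;=\; h_{L',\alpha_0}\,\breve F_\l\,\breve B_\l\,h^{-1}_{L',\alpha_0}.
\]
With $L'(z) := {}^t\overline{\breve F}_{\alpha_0}(z)\,L'$, I insert $\un = h^{-1}_{L'(z),\alpha_0}\,h_{L'(z),\alpha_0}$ and propose the candidate factors
\[
\tilde F_\l(z) := h_{L',\alpha_0}\,\breve F_\l(z)\,h^{-1}_{L'(z),\alpha_0},\qquad \tilde B_\l(z) := h_{L'(z),\alpha_0}\,\breve B_\l(z)\,h^{-1}_{L',\alpha_0}.
\]
So $\exp(z\xi_\l) = \tilde F_\l\,\tilde B_\l$ tautologically. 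If I can verify $\tilde F_\l \in \Lambda_r\SU$ and $\tilde B_\l \in \Lambda_r^+\SL(\C)$, then by uniqueness of the $r$-Iwasawa decomposition of $\exp(z\xi_\l)$ we conclude $F_\l = \tilde F_\l$, which is the claimed formula.

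Two of the three required verifications are essentially formal. For the reality condition on $\tilde F_\l$, a direct computation using the diagonal form of $\pi_{\alpha_0}$ gives $\overline{\pi_{\alpha_0}^{-1}(1/\bar\l)}^t = \pi_{\alpha_0}(\l)$; conjugating by $Q_{L}$ and using $Q_{1,L}\in\SU$ yields $\overline{h_{L,\alpha_0}(1/\bar\l)}^t = h^{-1}_{L,\alpha_0}(\l)$, and combining with $\overline{\breve F^{-1}_{1/\bar\l}}^t = \breve F_\l$ shows $\overline{\tilde F^{-1}_{1/\bar\l}}^t = \tilde F_\l$. For $\tilde B_\l \in \Lambda_r^+\SL(\C)$, both $h_{L'(z),\alpha_0}$ and $h^{-1}_{L',\alpha_0}$ are holomorphic on $I_r\cup\bbS_r$ since their only singularities at $\alpha_0$ and $1/\bar\alpha_0$ lie outside this set by the assumption $r<\min\{|\alpha_0|,1/|\alpha_0|\}$. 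At $\l=0$, the identity $\pi_L(0)=R_{1,L}Q_{1,L}$ forces $h_{L'(z),\alpha_0}(0) = R_{1,L'(z)}^{-1}$ and $h^{-1}_{L',\alpha_0}(0) = R_{1,L'}$, both upper triangular with positive real diagonal, so the product with $\breve B_\l$ stays in $\Lambda_r^+\SL(\C)$.

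The main obstacle is holomorphy of $\tilde F_\l$ on the annulus $A_r$, where both $h_{L',\alpha_0}$ and $h^{-1}_{L'(z),\alpha_0}$ typically carry square-root poles at $\alpha_0$ and $1/\bar\alpha_0$. This is exactly where the specific prescription $L'(z) = {}^t\overline{\breve F}_{\alpha_0}(z)L'$ is forced. Expanding near $\l=\alpha_0$, the factor $\pi_{\alpha_0}^{-1}$ becomes $\mathrm{diag}(1,0)$ (scaled by $\sqrt{1-|\alpha_0|^2}/\sqrt{\l-\alpha_0}$) and $\pi_{\alpha_0}$ becomes $\mathrm{diag}(0,1)$ (scaled the same way), which after conjugation by $Q_{L'}$ respectively $Q_{L'(z)}$ yields the orthogonal projections $P_{L'}$ and $P_{L'(z)^\perp}$. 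Hence
\[
\sqrt{\l-\alpha_0}\,h_{L',\alpha_0}(\l)\bigl|_{\l=\alpha_0} = \sqrt{1-|\alpha_0|^2}\,Q_{1,L'}\,P_{L'},
\]
\[
\sqrt{\l-\alpha_0}\,h^{-1}_{L'(z),\alpha_0}(\l)\bigl|_{\l=\alpha_0} = \sqrt{1-|\alpha_0|^2}\,P_{L'(z)^\perp}\,Q_{1,L'(z)}^{-1},
\]
so the leading residue of $\tilde F_\l$ at $\alpha_0$ is proportional to $Q_{1,L'}\,P_{L'}\,\breve F_{\alpha_0}(z)\,P_{L'(z)^\perp}\,Q_{1,L'(z)}^{-1}$. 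This vanishes precisely when $\breve F_{\alpha_0}(z)\,L'(z)^\perp \subseteq L'^\perp$. Using the general identity $(AL)^\perp = A^{-*}L^\perp$ with $A = \breve F_{\alpha_0}(z)^* = {}^t\overline{\breve F}_{\alpha_0}(z)$, I obtain $L'(z)^\perp = \breve F_{\alpha_0}(z)^{-1}L'^\perp$, which is exactly the needed condition. The cancellation at $1/\bar\alpha_0$ then follows automatically from the already-established reality of $\tilde F_\l$, and these are the only potential poles of $\tilde F_\l$ on $A_r$. This closes the argument.
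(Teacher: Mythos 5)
Your proposal is correct and follows essentially the same route as the paper's appendix proof: exponentiate the factored potential, invoke uniqueness of the $r$-Iwasawa decomposition, and show the candidate unitary factor lies in $\Lambda_r\SU$ by checking that the choice $L'(z)={}^t\overline{\breve F}_{\alpha_0}(z)L'$ kills the residues at $\alpha_0$ and $1/\bar\alpha_0$. Your two refinements — writing the residue as $Q_{1,L'}P_{L'}\breve F_{\alpha_0}(z)P_{L'(z)^\perp}Q_{1,L'(z)}^{-1}$ instead of evaluating on a basis, and deducing the cancellation at $1/\bar\alpha_0$ from the reality condition rather than by a second limit computation — are sound and marginally cleaner, but do not change the argument.
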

\begin{proof} By r-Iwasawa decomposition
\begin{equation*} \begin{split}
    F_\l(z) \,B_\l(z) &= \exp(z \xi_{\l}) = \exp(zp(\l) h_{L' ,\alpha_0}\breve \xi_{\l} h^{-1}_{L',\alpha_0}) \\ &= h_{L',\alpha_0} \exp (z p(\l) \breve \xi_{\l} ) h^{-1}_{L',\alpha_0} \in \Lambda_r \SL (\C)
\end{split}
\end{equation*}
and $h_{L',\alpha_0} \in \Lambda_r ^+ \SL (\C)$ ($h_{L',\a_0}(0)= Q_{1,L'} Q_{L'} \pi_{\alpha_0} ^{-1} (0)Q_{L'}^{-1} =R_{1,L'}^{-1}$ at $\l=0$ and $\pi$ is holomorphic for $r <|\a _0|$).
Then
$$
    \exp (z p(\l) \breve \xi_{\l})h^{-1}_{L',\alpha_0} =
    \breve F_\l (z) \breve B_\l'(z)= \breve F_\l (z) \,\breve B_\l (z)  h^{-1}_{L',\alpha_0}\,.
$$
Now we have
\begin{equation*} \begin{split}
    F_\l(z)B_\l(z) &= h_{L',\alpha_0} \exp (z p (\l) \breve \xi_{\l}) h^{-1}_{L',\alpha_0} = h_{L',\alpha_0} \breve F_\l(z) \breve B_\l'(z) \\ &=(h_{L',\alpha_0} \breve F_\l (z) H)(H^{-1} \breve B_\l' (z))\,.
\end{split}
\end{equation*}
By uniqueness of the r-Iwasawa decomposition we have only to prove that if $H=h^{-1}_{L'(z),\alpha_0} \in H^r_{\alpha_0}$ with $L'(z)={}^t \bar {\breve{ F}}_{\a_0}(z) L'$ then $h_{L',\alpha_0} \breve F_\l (z) H \in \Lambda_r \SU$.
Clearly
$$
    h_{L',\alpha_0} \breve F_\l (z) h^{-1}_{L' (z),\alpha_0}  \in  \Lambda_r \SL (\C)
$$
is holomorphic on $A_r$ away from $\alpha_0$ and $1/{\bar \alpha_0}$, and $\SU$-valued on $\SY^1$. At the roots $\alpha_0$ and $1/{\bar \alpha_0}$,
we have simple poles and we have to study the residues of
$$
    G_\l(z)=\pi_{L'}^{-1}(\l) \breve F_\l(z) \pi_{L' (z) }(\l).
$$
Now we consider the simple factor $L'(z)={ {}^t \overline{\breve F} }_{\alpha_0}(z) L'$.
Let $(L'(z),L'(z)^{\perp})$ be an orthonormal basis of $\C^2$. Note that $L'(z)={}^t \bar{\breve F}_{\alpha_0} (z) L' = \breve F ^{-1}_{1/\bar{ \alpha}_0} (z) L'$ and
$\breve F_{\alpha_0}(z) ^{-1}  L'^{\perp}=L'(z)^{\perp}$. When  $\l \rightarrow 1/\bar \alpha_0$, we have
$$
    \lim _{\l \rightarrow 1/\bar \alpha_0}G_\l L'(z)= \lim _{\l \rightarrow 1/\bar \alpha_0} \sqrt{\tfrac{\lambda-\alpha_0}{1-\bar{\alpha}_0\,\lambda}} Q_{L'} \pi_{\alpha_0}^{-1} Q_{L'}^{-1}  \breve F_\l \bar {\breve F}^t_{\alpha_0}  L' (z) = L'\,,
$$
$$
    \lim _{\l \rightarrow \bar\alpha_0^{-1}} (1-\bar\alpha_0 \l) G_\l L'^{\perp} (z)= \lim _{\l \rightarrow \bar\alpha_0^{-1}}  (1-\bar \alpha_0 \l) \sqrt{\tfrac{1-\bar{\alpha}_0\,\lambda}{\lambda-\alpha_0} }Q_{L'} \pi^{-1}_{\alpha_0} Q_{L'}^{-1} \breve F_\l L '^{\perp}(z) = 0\,.
$$
When $\l \rightarrow  \alpha_0$, we compute
$$
    \lim _{\l \rightarrow  \alpha_0} (\l - \alpha_0) G_\l L' (z) = \lim _{\l \rightarrow  \alpha_0} (\l - \alpha_0) \sqrt{\tfrac{\lambda-\alpha_0}{1-\bar{\alpha}_0\,\lambda}}Q_{L'} \pi_{\alpha_0}^{-1} Q_{L'}^{-1} \breve F_\l    L' (z) = 0\,,
$$
$$
    \lim _{\l \rightarrow  \alpha_0}  G_\l L'^{\perp} (z)= \lim _{\l \rightarrow  \alpha_0}  \sqrt{\tfrac{1-\bar{\alpha}_0\,\lambda}{\lambda-\alpha_0}}Q_{L'} \pi_{\alpha_0}^{-1} Q_{L'}^{-1}  \breve F_\l   \breve F_{\alpha_0}^{-1} L'^{\perp}(z) = L'^{\perp}\,.
$$
This proves the proposition.
\end{proof}

\bibliographystyle{amsplain}

\def\cydot{\leavevmode\raise.4ex\hbox{.}} \def\cprime{$'$}
\providecommand{\bysame}{\leavevmode\hbox to3em{\hrulefill}\thinspace}
\providecommand{\MR}{\relax\ifhmode\unskip\space\fi MR }
\providecommand{\MRhref}[2]{%
\href{http://www.ams.org/mathscinet-getitem?mr=#1}{#2}
}
\providecommand{\href}[2]{#2}

\end{document}